\numberwithin{equation}{section}             
\theoremstyle{definition}                          
\newtheorem{thm}{Theorem}[section]     
\newtheorem{prop}[thm]{Proposition}      
\newtheorem{cor}[thm]{Corollary}            
\newtheorem{lem}[thm]{Lemma}             
\theoremstyle{definition}               
\newtheorem{dfn}[thm]{Definition}
\newtheorem{ese}[thm]{Example}        
\newtheorem{nota}[thm]{Notation}
\newtheorem{rem}[thm]{Remark}          
\newtheoremstyle{prova}
{3pt}
{3pt}
{}
{}
{\textbf}
{.}
{\newline}
{}
\theoremstyle{prova}
\def\R{\mathbb R}
\def\N{\mathbb N}
\def\Z{\mathbb Z}
\def\D{\mathbb D}
\def\P{\mathbb P}
\def\W{\mathbb W}
\def\X{\mathbb X}
\def\Y{\mathbb Y}
\def\Z{\mathbb Z}
\def\shd{{\cal D}}
\def\shf{{\cal F}}
\def\shm{{\cal M}}
\def\shs{{\cal S}}
\def\shu{{\cal U}}
\def\1{\mathds{1}}
\def\be{\begin{equation}}
\def\ee{\end{equation}}
\def\bs{\begin{split}}
\def\es{\end{split}}
\begin{document}

\title {Generalized covariation for Banach space valued processes, It\^o formula and applications}
%

\author{{\sc Cristina DI GIROLAMI}
   \thanks{
   Laboratoire Manceau de Math\'ematiques, Facult\'e des Sciences et Techniques, Universit\'e du Maine, D\'epartement de Math\'ematiques, Avenue Olivier Messiaen, 72085 Le Mans CEDEX 9 (France).
E-mail: {\tt Cristina.Di\_Girolami@univ-lemans.fr }}
  {\sc}\  
\ {\sc and}\ {\sc Francesco RUSSO} \thanks{ENSTA ParisTech,
Unit\'e de Math\'ematiques appliqu\'ees,
32, Boulevard Victor,
F-75739 Paris Cedex 15 (France)
E-mail: {\tt  francesco.russo@ensta-paristech.fr}.
}
}

\date{Actual version: January 14th 2013 (First one: December 23th 2010)  }
\maketitle

\begin{abstract}
This paper discusses a new notion of quadratic variation and covariation for
 Banach space valued processes (not necessarily semimartingales) and 
related It\^o formula. 
If $\X$ and $\Y$ take respectively values in Banach spaces $B_{1}$ and $B_{2}$
 and $\chi$ is a 
suitable subspace of the dual of the projective tensor product of
 $B_{1}$ and $B_{2}$ (denoted by $(B_{1}\hat{\otimes}_{\pi}B_{2})^{\ast}$), 
we define the so-called $\chi$-covariation of $\X$ and $\Y$. If $\X=\Y$,
 the $\chi$-covariation is called $\chi$-quadratic variation. 
The notion of $\chi$-quadratic variation is a natural generalization of 
the one introduced by M\'etivier-Pellaumail and Dinculeanu which 
is too restrictive for many applications. In particular, if $\chi$ is the whole space $(B_{1}\hat{\otimes}_{\pi}B_{1})^{\ast}$ then the $\chi$-quadratic variation coincides with 
the quadratic variation of a $B_{1}$-valued semimartingale. 
We evaluate the $\chi$-covariation of various processes for several examples of $\chi$ with a particular attention to the case $B_{1}=B_{2}=C([-\tau,0])$ for some $\tau>0$ and 
$\X$ and $\Y$ being \textit{window processes}.
If $X$ is a real valued process, we call window process associated with $X$ 
the $C([-\tau,0])$-valued process $\X:=X(\cdot)$ defined by
$X_t(y) = X_{t+y}$, where $y \in [-\tau,0]$. The It\^o formula
 introduced here is an important instrument to establish 
a representation result of Clark-Ocone type for a class of path
 dependent random variables of type $h=H(X_{T}(\cdot))$, 
$H:C([-T,0])\longrightarrow\R$ 
for not-necessarily semimartingales $X$ with finite quadratic variation. 
This representation will be linked to a  function $u:[0,T]\times C([-T,0])\longrightarrow \mathbb{R}$ solving 
an infinite dimensional partial differential equation.

\end{abstract}

[\textbf{2010 Math Subject Classification}:  ] \   {60G05, 60G07, 60G22, 60H05, 60H99. }
\medskip


\bigskip

{\bf Key words and phrases} Covariation and Quadratic variation; 
Calculus via regularization; Infinite dimensional analysis;
  Tensor analysis; It\^o formula; Stochastic integration.

\section{Introduction}
The present paper settles the basis for the calculus via regularization
 for processes with values in an infinite dimensional separable Banach 
 space $B$.
We introduce a new approach to face stochastic integration
for infinite dimensional processes, based on an original 
generalization of the  notion of quadratic covariation.
This allows to discuss stochastic calculus in a more general
framework than in the present literature.

The extension of It\^o stochastic integration theory for Hilbert
 valued processes 
dates only from the eighties, the results of which can be found in the monographs \cite{Me84G,MeSN,dpz}  and \cite{walsh} with different techniques.
However the discussion of this last approach is not the aim of this
paper. Extension to nuclear valued spaces is simpler and was done in
 \cite{KaMiW, Ust82}. 
One of the most natural but difficult situations arises when the processes are Banach space valued. \\
As for the real case, a possible tool of infinite dimensional stochastic calculus is the concept of quadratic variation, 
or more generally of covariation.
The notion of covariation is historically defined for two real valued
 $(\mathcal{F}_{t})$-semimartingales $X$ and $Y$.
This notion was extended to the case of general processes by means of discretization techniques, for instance by \cite{fo}, or via regularization, 
 in \cite{rv2,Rus05}. 
In this paper we will follow the language of regularization; for simplicity we suppose that either $X$ or $Y$ is continuous. In the whole paper $T$ will be a fixed positive number. 
Every process will be indexed by $[0,T]$, but, if it is continuous, it
 can be extended to the real line for convenience by setting $X_t=X_0$ if $t<0$ and $X_t=X_T$ for $t\geq T$.
\begin{dfn}			\label{def cov}
Let $X$ and $Y$ be two real processes such that $X$ is continuous and $Y$ has almost surely locally integrable paths. For $\epsilon >0$, we denote 
\begin{eqnarray}
[X,Y]^{\epsilon}_{t}&=&\int_{0}^{t}\frac{ \left(X_{s+\epsilon}-X_{s} \right)\left(Y_{s+\epsilon}-Y_{s} \right)}{\epsilon}ds  \; , \quad t\in [0,T] \; ,\\
I^{-}(\epsilon,Y,dX)_{t}&=&\int_{0}^{t} Y_{s} \frac{X_{s+\epsilon}-X_{s}}{\epsilon}ds  \; , \quad t\in [0,T] \; .
\end{eqnarray}
\begin{enumerate}
\item
We say that $X$ and $Y$ admit a \textbf{covariation} if $\lim_{\epsilon\rightarrow 0} [X,Y]^{\epsilon}_{t}$ exists in probability for every $t \in [0,T]$ and 
 the limiting process admits a continuous version that will be denoted by $[X,Y]$.
If $[X,X]$ exists, we say that $X$ has a \textbf{quadratic variation} and it will 
 also be denoted by $[X]$. If $[X]=0$ we say that $X$ is a \textbf{zero quadratic variation process}.
\item 
The \textbf{forward integral} 
$\int_{0}^{t}Y_{s}d^{-}X_{s}$ is a continuous process $Z$, such that whenever it exists,\\ 
$\lim _{\epsilon\rightarrow 0} I^{-}(\epsilon,Y,dX)_{t}=Z_{t}$ in probability for every 
$t\in [0,T]$.
\item If $\int_{0}^{t}Y_{s}d^{-}X_{s}$ exists for any $0\leq t< T$; $\int_{0}^{T}Y_{s}d^{-}X_{s}$ will symbolize the 
\textbf{improper forward integral} defined by $\lim_{t\rightarrow T}\int_{0}^{t}Y_{s}d^{-}X_{s} $, whenever it exists in probability.
\end{enumerate}
\end{dfn}
\begin{rem} \label{R120}
\begin{enumerate}
\item
 Lemma 3.1 in \cite{rv4} allows to show that, whenever
 $[X,X]$ exists, then $[X,X]^\varepsilon$ also converges 
in the \textbf{uniform convergence in probability 
  (ucp)} sense, see \cite{rv2,Rus05}.
The basic results established there are still valid here, see the
 following items.
 \item 
If $X$ (resp. $A$) is a finite (resp. zero) quadratic variation
 process, then $[A,X]=0$, see Proposition 1 5) of \cite{Rus05}.
\item If $Y$ is a bounded variation (c\`adl\`ag) process, then
$\int_0^t Y d^-X, t \in [0,T],$ exists and equals $Y_t X_t - Y_0 X_0 
- \int_{]0,t]} X dY, t \in [0,T],$ where the latter is a
 pathwise Lebesgue-Stieltjes
integral.
 This is a consequence of items 4) and 7) of Proposition 1
in \cite{Rus05}. 
\end{enumerate}
\end{rem}
Let $(\Omega,\shf,\P)$ be a fixed probability space, equipped with
a given filtration $\mathbb{F}=(\mathcal{F}_{t})_{t\in [0,T]}$ fulfilling the usual conditions. 
\begin{rem}\label{R120Sem}
 If $X$ is an $(\mathcal{F}_{t})$-continuous semimartingale and $Y$ is
 $(\mathcal{F}_{t})$-progressively measurable and  c\`adl\`ag 
(resp. an $(\mathcal{F}_{t})$-semimartingale) 
$\int_{0}^{\cdot}Y_{s}d^{-}X_{s}$ (resp. $[X,Y]$) coincides with the classical 
It\^o integral $\int_{]0,\cdot]} YdX$, also denoted by
 $\int_{0}^{\cdot} YdX$, (resp. the classical covariation of
their local martingale parts).
\end{rem}
The class of real finite quadratic variation processes is much richer than the one of semimartingales. 
Typical examples of such processes
are $(\mathcal{F}_t)$-Dirichlet processes. $D$ is called 
$(\mathcal{F}_{t})$-\textbf{Dirichlet process} if it admits a decomposition $D=M+A$ where $M$ is an
$(\mathcal{F}_{t})$-local martingale and $A$ is an
$(\mathcal{F}_{t})$-adapted
 zero quadratic
variation process.
A  slight generalization of that notion is
the one of weak Dirichlet process, which was
introduced in \cite{er2}. 
Another interesting example is the 
bifractional Brownian motion 
$B^{H,K}$ with parameters $H\in ]0,1[$ and $K\in ]0,1]$ which has finite quadratic variation if and only if $HK\geq 1/2$, see \cite{rtudor}. 
Notice that if $K=1$, then $B^{H,1}$ is a fractional Brownian motion with Hurst parameter $H\in ]0,1[$. 
If $HK=1/2$ it holds $[B^{H,K}]_t=2^{1-K}t$; if $K\neq 1$ this process is not even Dirichlet with respect to its own filtration. 
One object of this paper consists in investigating a possible useful generalization of the notions of covariation and quadratic variation for 
Banach space valued processes.
Particular emphasis  will be devoted to \textit{window processes} 
with values in the non-reflexive Banach space of real continuous functions
defined on $[-\tau, 0]$, $0< \tau\leq T$. To a real continuous process $X=(X_{t})_{t\in [0,T]}$, one can link  a natural infinite dimensional valued process defined as follows.
\begin{dfn}		\label{dfn WPRO}
Let $0<\tau\leq T$. We call \textbf{window process} associated with $X$, denoted by $X(\cdot)$, 
the $C([-\tau,0])$-valued process 
\begin{equation*}
X(\cdot)=\big(X_{t}(\cdot)\big)_{t\in [0,T]}=\{X_{t}(u):=X_{t+u}; u\in [-\tau,0], t\in [0,T]\} \; .
\end{equation*}
In the present paper, $W$ will always denote a real standard Brownian motion.
The window process $W(\cdot)$ associated with $W$ will be called
 \textbf{window Brownian motion}. 
\end{dfn}
Window  processes, taking values in the non-reflexive space 
$B = C([-\tau,0])$, are, in our opinion, an interesting
object which deserves more attention by stochastic analysis
experts. We enumerate some reasons. 
\begin{enumerate}
\item They naturally appear in functional dependent stochastic
 differential equations as delay equations.
\item  Let $W$ be a classical Wiener process.
Consider $h= \phi(W_T)$ for some Borel non-negative 
$\phi:\R \rightarrow \R$ and let $\shu:[0,T] \times \R \rightarrow \R$
be a solution of $\partial_t \shu_t + \frac{1}{2} \partial^2_{xx} \shu  = 0$ 
with final condition $\shu(T,x) = \phi(x)$.
  By  It\^o formula one can show that
that 
\begin{equation} \label{EEE10}
 h =  h_0 + \int_0^t \xi_s dW_s,
\end{equation}
 where $\xi_s \equiv \partial_x \shu(s,W_s)$ and $h_0= \shu(0,X_0).$ 
A path dependent  random variable     $h$  can be represented as a
 functional of the corresponding window process, i.e. $h = f(\W)$
where $\W = W(\cdot), f: C([-T,0]) \rightarrow \R$. 
If $u$ is a smooth solution of a suitable  partial differential equation,
with space variable in $C([-T,0])$
using an   $C([-T,0])$-valued It\^o formula,
we expect to be able to express   $h$ as \eqref{EEE10}
 where $h_0$ and $\xi$ depend on $u$.
Those considerations will extend to the case of a finite quadratic variation
 (even non-semimartingale) $X$. 
\item Even if the underlying process $X$ is a semimartingale,
its associated window $\X = X(\cdot)$ is not, in any reasonable sense.
Indeed if $\mu$ is a signed Borel measure on $[-\tau,0]$, i.e.
an element of $B^\ast$, the real valued process $X^\mu$ defined by
$ X^\mu_t =  \langle \mu, \X \rangle_t =
 \int_{[-\tau,0]} \mu(dx) X_{t+x}$ 
is in general not a real semimartingale, as Proposition \ref{prop WBMNS} illustrates.
In fact even if $X$ is a standard Wiener process, $ X^\mu$  is not a semimartingale. For instance if $\mu$ is the sum of Dirac measures $\mu = \delta_0 + \delta_{-\tau}$.
On the other hand if $X$ is a continuous semimartingale vanishing
at zero and $\mu (dx)  = \delta_0(dx) + g(x) dx $ where $g$ is a bounded Borel function then $ X^\mu$ is a semimartingale,
see Remark \ref{RWBMNS}, item 2.
\end{enumerate}

We will introduce a notion of covariation for processes with values in general Banach spaces but which will be performing also for window processes.
This paper settles the theoretical basis for the stochastic calculus part related to the first part of \cite{DGRnote} and which partially appears in \cite{DGR}. 
Let $B_{1}$, $B_{2}$ be two general Banach spaces. 
In this paper $\mathbb{X}$ (resp. $\mathbb{Y}$) will be a $B_{1}$ (resp. $B_{2}$) valued stochastic process. 
It is not obvious to define an exploitable 
notion of covariation (resp. quadratic variation) of $\mathbb{X}$ and
 $\mathbb{Y}$ (resp. of $\X$). 
When $\mathbb{X}$ is an $H$-valued martingale and $B_{1}=B_{2}=H$ is a
 separable Hilbert space,
\cite{dpz}, Chapter 3 introduces an operational notion
of quadratic variation.
\cite{dincuvisi} introduces in Definitions A.1 in Chapter 2.15 and B.9 in Chapter 6.23 the notions of  
\textit{semilocally summable} and \textit{locally summable processes with respect to a given bilinear mapping on $B\times B$}; 
see also Definition C.8 in Chapter 2.9 for the definition of \textit{summable} process. Similar notions appears in \cite{mp}. 
Those processes are very close to Banach space valued semimartingales. If $B$ is a Hilbert space, a semimartingale is semilocally summable when the bilinear form is the inner product.
For previous processes, \cite{dincuvisi} defines two natural notions of quadratic variation: 
the real quadratic variation and the tensor quadratic variation. 
For avoiding confusion with the quadratic variation of real processes,
 we will use  the terminology \emph{scalar} instead of \emph{real}.
Even though \cite{mp,dincuvisi} make use of discretizations, we define here, for commodity, two very similar objects but in our regularization language, see Definition \ref{dfn REALTENSOQV}. 
Moreover, the notion below extends to the covariation of two processes $\X$ and $\Y$ for which we remove the assumption of 
\textit{semilocally summable} or \textit{locally summable}. Before that, we remind some properties related to tensor products of two Banach spaces $E$ and $F$, see \cite{rr} for details.
If $E$ and $F$ are Banach spaces, $E\hat{\otimes }_{\pi}F$ (resp. $E\hat{\otimes }_{h}F$) is a Banach space which denotes 
 the \textbf{projective} (resp. \textbf{Hilbert}) \textbf{tensor product of $E$ and $F$}. 
 We recall that $E\hat{\otimes }_{\pi}F$ (resp. $E\hat{\otimes }_{h}F$) is obtained by a completion of the algebraic tensor product $E\otimes F$ equipped with the projective 
norm $\pi$ (resp. Hilbert norm $h$). For a general element $u=\sum_{i=1}^{n}e_{i}\otimes f_{i} $ in $E\otimes F$, $e_{i}\in E$ and $f_{i}\in F$, it holds
$\pi(u)=\inf \left\{  \sum_{i=1}^{n}\|e_{i}\|_{E} \, \| f_{i} \|_{F}:\, u=\sum_{i=1}^{n}e_{i}\otimes f_{i}, e_{i}\in E\, , f_{i}\in F \right\}$. 
For the definition of the Hilbert tensor norm $h$ the reader may refer \cite{rr}, Chapter 7.4.
We remind that if $E$ and $F$ are Hilbert spaces the Hilbert tensor product $E\hat{\otimes }_{h}F$ is also Hilbert and its inner product 
between $e_{1}\otimes f_{1}$ and $e_{2}\otimes f_{2}$ equals $\langle e_{1},e_{2}\rangle_{E}\cdot \langle f_{1},f_{2}\rangle_{F}$. 
Let $e\in E$ and $f\in F$, 
the symbol $e\otimes f$ (resp. $e\otimes ^{2}$) will denote an elementary element of the algebraic tensor product $E\otimes F$ (resp. $E\otimes E$).
The Banach space $(E\hat{\otimes}_{\pi}F)^{\ast}$ denotes the topological dual of the projective tensor product 
equipped with the operator norm. As announced we give now the two definitions of scalar and tensor covariation and quadratic variation.
\begin{dfn} 		\label{dfn REALTENSOQV}
Let $\X$ (resp. $\Y$) be a $B_{1}$ (resp. $B_{2}$) valued stochastic process.
\begin{enumerate}
\item $(\X,\Y)$ is said to admit a \textbf{scalar covariation}  if the 
limit for $\epsilon\downarrow 0$ of the sequence 
\begin{equation}		\label{eq DFNREAL}
[\X,\Y]_{\cdot}^{\mathbb{R},\epsilon}=\int_{0}^{\cdot}\frac{\| \X_{s+\epsilon}-\X_{s}\|_{B_{1}} \| \Y_{s+\epsilon}-\Y_{s}\|_{B_{2}}}{\epsilon} ds   \; 
\end{equation} 
exists ucp. That limit will be indeed called 
 scalar covariation of $\X$ and $\Y$ and it will be simply denoted by $[\X,\Y]^{\R}$. 
The scalar covariation $[\X,\X]^{\R}$ will be called
 \textbf{scalar quadratic variation} of $\X$ and simply denoted  by $[\X]^{\R}$.
\item $(\X,\Y)$ admits a \textbf{tensor covariation} if there exists a $(B_{1}\hat{\otimes}_{\pi}B_{2})$-valued process denoted by $[\X,\Y]^{\otimes}$ such that the sequence of Bochner 
$(B_{1}\hat{\otimes}_{\pi}B_{2})$-valued integrals
\begin{equation}		\label{eq DFNTENS}
[\X,\Y]_{\cdot}^{\otimes,\epsilon}=\int_{0}^{\cdot}\frac{(\X_{s+\epsilon}-\X_{s})\otimes (\Y_{s+\epsilon}-\Y_{s})}{\epsilon} ds
\end{equation} 
converges  ucp for $\epsilon\downarrow 0$ (according to the strong topology) to a $(B_{1}\hat{\otimes}_{\pi}B_{2})$-valued process $[\X,\Y]^{\otimes}$.\\
$[\X,\Y]^{\otimes}$ will  indeed be called tensor covariation of $(\X,\Y)$. The tensor covariation $[\X,\X]^{\otimes}$ 
will be called \textbf{tensor quadratic variation} and simply denoted by $[\X]^{\otimes}$.
\end{enumerate}
\end{dfn}
\begin{rem} 
\begin{enumerate}				\label{rem RE1}
\item By use of Lemma 3.1 in \cite{rv4}, if
 $[\X,\Y]_{\cdot}^{\mathbb{R},\epsilon}$ converges, for any $t \in [0,T],$ 
to $Z_{t}$, where $Z$ is a continuous 
process, then the scalar covariation of $(\X,\Y)$ exists and $[\X,\Y]^{\R}=Z$.
\item If $(\X,\Y)$ admits both a scalar 
and  tensor covariation, then 
the tensor covariation process has bounded variation and its
 total variation is bounded by the 
scalar covariation which is clearly an increasing process.
\item
If $(\X,\Y)$ admits a tensor covariation, then we have in particular 
\begin{equation}		\label{eqLLL}
 \frac{1}{\epsilon} \int_{0}^{\cdot} \langle \phi, (\X_{s+\epsilon}-\X_{s})\otimes (\Y_{s+\epsilon}-\Y_{s}) \rangle \, ds \xrightarrow[\epsilon\longrightarrow 0] {ucp}\langle \phi,  [\X,\Y]^{\otimes}\rangle,
\end{equation}
for every $\phi\in (B_{1}\hat{\otimes}_{\pi}B_{2})^{\ast}$, $\langle \cdot,\cdot \rangle $ denoting the duality between $B_{1}\hat{\otimes}_{\pi}B_{2}$ and its dual. 
\item If 
$[\X,\Y]^{\R}=0$, then $(\X,\Y)$
 admits a tensor covariation which also vanishes.
\end{enumerate}
\end{rem}
\begin{prop}		\label{prop P1}
Let $\X$ be an $(\mathcal{F}_{t})$-adapted semilocally summable process with respect to the bilinear maps
(tensor product)  $B\times B \longrightarrow B\hat{\otimes}_{\pi} B $, given by  
$(a,b)\mapsto a\otimes b$ and $(a,b)\mapsto b\otimes a$. Then $\X$ admits a tensor quadratic variation.
\end{prop}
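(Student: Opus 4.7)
The plan is to bridge the Dinculeanu--Pellaumail construction of the tensor quadratic variation (which is based on partitions of $[0,T]$) with the regularization-based convergence required in Definition \ref{dfn REALTENSOQV}. The link between the two is a tensor-valued It\^o formula that is already available under semilocal summability with respect to the two bilinear maps $(a,b)\mapsto a\otimes b$ and $(a,b)\mapsto b\otimes a$. Specifically, from \cite{dincuvisi} these hypotheses guarantee the existence of the $B\hat{\otimes}_{\pi}B$-valued stochastic integrals $\int_{0}^{t}\X_{s-}\otimes d\X_{s}$ and $\int_{0}^{t}d\X_{s}\otimes \X_{s-}$, together with a process $[\X]^{\otimes,MP}$ (defined as a limit along partitions) such that
\begin{equation}
\X_{t}\otimes \X_{t}-\X_{0}\otimes \X_{0}=\int_{0}^{t}\X_{s-}\otimes d\X_{s}+\int_{0}^{t}d\X_{s}\otimes \X_{s-}+[\X]^{\otimes,MP}_{t}.
\end{equation}
After a standard localization (via stopping times making $\X$ summable on $[0,T]$), the goal reduces to proving $[\X,\X]^{\otimes,\epsilon}\to [\X]^{\otimes,MP}$ ucp.

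The key algebraic step is the identity, valid in $B\hat{\otimes}_{\pi}B$,
\begin{equation}
(\X_{s+\epsilon}-\X_{s})\otimes (\X_{s+\epsilon}-\X_{s})=\X_{s+\epsilon}\otimes \X_{s+\epsilon}-\X_{s}\otimes \X_{s}-\X_{s}\otimes (\X_{s+\epsilon}-\X_{s})-(\X_{s+\epsilon}-\X_{s})\otimes \X_{s}.
\end{equation}
Dividing by $\epsilon$ and integrating from $0$ to $t$ yields a decomposition $[\X,\X]^{\otimes,\epsilon}_{t}=A^{\epsilon}_{t}-B^{\epsilon}_{t}-C^{\epsilon}_{t}$, where $A^{\epsilon}_{t}=\frac{1}{\epsilon}\int_{0}^{t}(\X_{s+\epsilon}\otimes \X_{s+\epsilon}-\X_{s}\otimes \X_{s})ds$ is telescoping and $B^{\epsilon}, C^{\epsilon}$ are Bochner-valued regularized forward integrals. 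A standard averaging argument (adapted to the c\`adl\`ag setting by using that $\sum_{s\le t}\|\Delta \X_{s}\|^{2}<\infty$ thanks to semilocal summability) shows $A^{\epsilon}_{t}\to \X_{t}\otimes \X_{t}-\X_{0}\otimes \X_{0}$ ucp.

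The heart of the proof is then to show that
\begin{equation}
B^{\epsilon}_{\cdot}\xrightarrow[\epsilon\to 0]{ucp}\int_{0}^{\cdot}\X_{s-}\otimes d\X_{s},\qquad C^{\epsilon}_{\cdot}\xrightarrow[\epsilon\to 0]{ucp}\int_{0}^{\cdot}d\X_{s}\otimes \X_{s-},
\end{equation}
where the right-hand sides are the Dinculeanu integrals corresponding, respectively, to the bilinear forms $(a,b)\mapsto a\otimes b$ and $(a,b)\mapsto b\otimes a$. This is where one uses simultaneously both semilocal summability assumptions: they provide the control $\pi\bigl(\X_{s}\otimes (\X_{s+\epsilon}-\X_{s})\bigr)\le \|\X_{s}\|\,\|\X_{s+\epsilon}-\X_{s}\|$ together with a Doob-type inequality in the projective norm. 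The convergence is proved by approximating $\X_{s-}$ by left-continuous step processes $\X^{n}_{s-}$ adapted to a partition $\sigma_{n}$ of $[0,T]$; for a step integrand the regularized integral coincides in the limit $\epsilon\to 0$ with the corresponding Riemann-type sum, and the Dinculeanu integral of $\X^{n}_{s-}$ converges, as $|\sigma_{n}|\to 0$, to $\int \X_{s-}\otimes d\X_{s}$ by definition. A Fubini-type argument, analogous to the one used in the real-valued setting in \cite{rv2,Rus05}, then permits the inversion of the two limits. Combining the three convergences and comparing with the It\^o formula produces $[\X,\X]^{\otimes,\epsilon}_{\cdot}\to [\X]^{\otimes,MP}_{\cdot}$ ucp.

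The main obstacle is exactly the passage from the partition-based Dinculeanu integral to its regularized counterpart in $B\hat{\otimes}_{\pi}B$: the projective tensor norm is neither Hilbertian nor submultiplicative in any refined sense, so one cannot rely on orthogonality or isometry estimates. The symmetry hypothesis, \emph{i.e.} semilocal summability with respect to \emph{both} orderings $a\otimes b$ and $b\otimes a$, is crucial because $B^{\epsilon}$ and $C^{\epsilon}$ must be controlled independently and cannot be merged via a symmetry of the tensor product (since, in general, $a\otimes b\ne b\otimes a$ in $B\hat{\otimes}_{\pi}B$).
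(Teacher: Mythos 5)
Your proposal follows essentially the same route as the paper's (sketched) proof: the same reduction by localization to a summable, bounded process, the same algebraic identity expanding $(\X_{s+\epsilon}-\X_s)^{\otimes 2}$ in terms of $\X_{s+\epsilon}^{\otimes 2}-\X_s^{\otimes 2}$ and the two cross terms, the telescoping limit for the first piece, and the identification of the cross terms with the Dinculeanu integrals $\int_0^t\X_s\otimes d\X_s$ and $\int_0^t d\X_s\otimes\X_s$. The one place where you genuinely diverge is the proof of convergence of the cross terms: you approximate the integrand by step processes and then interchange the limits in $n$ and $\epsilon$, whereas the paper first applies a Fubini rewriting, $\frac{1}{\epsilon}\int_0^t\X_s\otimes(\X_{s+\epsilon}-\X_s)\,ds=\int_0^t\bigl(\frac{1}{\epsilon}\int_{u-\epsilon}^u\X_s\,ds\bigr)\otimes d\X_u$, and then invokes a dominated convergence theorem for the integral with respect to a summable process (Theorem 1, Section 12.A of \cite{dincuvisi}) applied to the bounded integrands $\frac{1}{\epsilon}\int_{u-\epsilon}^u\X_s\,ds$, which converge pointwise to $\X_u$. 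The paper's route is cleaner and avoids the limit interchange, which in your sketch is asserted rather than justified: the uniform-in-$\epsilon$ control needed to swap the two limits is precisely what the dominated convergence theorem for the Dinculeanu integral supplies, so you would end up needing that tool anyway. Finally, your detour through the partition-based It\^o formula and the identification with $[\X]^{\otimes,MP}$ is not needed: once the two stochastic integrals exist, the regularized quantity converges and the tensor quadratic variation is simply $\X_t^{\otimes 2}-\X_0^{\otimes 2}-\int_0^t\X_s\otimes d\X_s-\int_0^t d\X_s\otimes\X_s$; the comparison with the discretization-based object is a bonus, not an ingredient.
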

\begin{prop}		\label{prop P2}
Let $\X$ be a Hilbert space valued continuous $(\mathcal{F}_{t})$-semimartingale in the sense of \cite{mp}, section 10.8. 
Then $\X$ admits a scalar quadratic variation.
\end{prop}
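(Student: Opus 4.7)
The plan is to reduce the scalar quadratic variation statement to the tensor one via a continuous linear functional on $H\hat{\otimes}_{\pi}H$. The key algebraic observation is that in a Hilbert space $H$ one has the identity $\|v\|_{H}^{2}=\langle v,v\rangle_{H}$ for every $v\in H$, and that the inner product $(a,b)\mapsto \langle a,b\rangle_{H}$ is a bounded bilinear form on $H\times H$. By the universal property of the projective tensor product, this bilinear form extends uniquely to a continuous linear functional $J\in (H\hat{\otimes}_{\pi}H)^{\ast}$ such that $\langle J,a\otimes b\rangle=\langle a,b\rangle_{H}$. In particular $\langle J,v\otimes v\rangle=\|v\|_{H}^{2}$.

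Next I would invoke Proposition \ref{prop P1}. A continuous Hilbert space valued $(\mathcal{F}_{t})$-semimartingale in the sense of \cite{mp}, section 10.8 is semilocally summable with respect to the canonical bilinear maps $(a,b)\mapsto a\otimes b$ and $(a,b)\mapsto b\otimes a$ (this is the content of the structural results in \cite{mp} relating Hilbert valued semimartingales to their tensor characteristics, and is also the starting point of \cite{dincuvisi}). Hence by Proposition \ref{prop P1}, $\mathbb{X}$ admits a tensor quadratic variation $[\mathbb{X}]^{\otimes}$ with values in $H\hat{\otimes}_{\pi}H$, namely
\begin{equation*}
[\mathbb{X}]^{\otimes,\epsilon}_{t}=\int_{0}^{t}\frac{(\mathbb{X}_{s+\epsilon}-\mathbb{X}_{s})\otimes(\mathbb{X}_{s+\epsilon}-\mathbb{X}_{s})}{\epsilon}\,ds\xrightarrow[\epsilon\to 0]{ucp}[\mathbb{X}]^{\otimes}
\end{equation*}
in the strong topology of $H\hat{\otimes}_{\pi}H$.

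Applying the continuous functional $J$ to this approximating sequence and using point (3) of Remark \ref{rem RE1} gives
\begin{equation*}
\langle J,[\mathbb{X}]^{\otimes,\epsilon}_{t}\rangle=\int_{0}^{t}\frac{\langle J,(\mathbb{X}_{s+\epsilon}-\mathbb{X}_{s})\otimes(\mathbb{X}_{s+\epsilon}-\mathbb{X}_{s})\rangle}{\epsilon}\,ds=\int_{0}^{t}\frac{\|\mathbb{X}_{s+\epsilon}-\mathbb{X}_{s}\|_{H}^{2}}{\epsilon}\,ds=[\mathbb{X}]^{\mathbb{R},\epsilon}_{t},
\end{equation*}
and this quantity converges ucp to $\langle J,[\mathbb{X}]^{\otimes}\rangle$, which is a continuous real-valued process because $[\mathbb{X}]^{\otimes}$ is continuous in $H\hat{\otimes}_{\pi}H$ and $J$ is bounded linear. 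Therefore $\mathbb{X}$ admits a scalar quadratic variation, equal to $\langle J,[\mathbb{X}]^{\otimes}\rangle$.

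I expect the only genuinely delicate point to be the verification that the class of Hilbert-valued continuous semimartingales in the sense of \cite{mp}, section 10.8, falls inside the hypotheses of Proposition \ref{prop P1}, i.e.\ that such a process is semilocally summable with respect to the two canonical bilinear maps $\otimes$ and its flip. Everything else (universal property of $\hat{\otimes}_{\pi}$, passage of the ucp limit through the bounded functional $J$, rewriting $\|\cdot\|_{H}^{2}$ as $\langle J,\cdot\otimes\cdot\rangle$) is essentially bookkeeping. The payoff is a clean identity $[\mathbb{X}]^{\mathbb{R}}=\langle J,[\mathbb{X}]^{\otimes}\rangle$ which, incidentally, is the infinite-dimensional analogue of the trace formula for the quadratic variation of Hilbert space valued martingales in \cite{dpz}.
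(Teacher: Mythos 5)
Your reduction of the scalar quadratic variation to the tensor one is sound as far as it goes: if $\X$ admits a tensor quadratic variation, then applying the functional $J\in (H\hat{\otimes}_{\pi}H)^{\ast}$ induced by the inner product to $[\X]^{\otimes,\epsilon}$ does give $[\X]^{\R,\epsilon}=\langle J,[\X]^{\otimes,\epsilon}\rangle$, and the ucp convergence passes through the bounded functional. The problem is the step you yourself flag as ``the only genuinely delicate point'' and then assert without proof: that a Hilbert space valued continuous semimartingale in the sense of M\'etivier--Pellaumail is semilocally summable with respect to the bilinear maps $(a,b)\mapsto a\otimes b$ and $(a,b)\mapsto b\otimes a$ with values in $H\hat{\otimes}_{\pi}H$. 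This is not ``the content of the structural results'' of that theory; what is actually guaranteed there (and what the introduction of the present paper states) is that a Hilbert valued semimartingale is semilocally summable when the bilinear form is the \emph{inner product}, i.e.\ a scalar-valued form. Summability is sensitive to the target space, and the projective norm is the largest cross-norm, so summability with values in $H\hat{\otimes}_{\pi}H$ is a strictly stronger requirement than summability with respect to $\langle\cdot,\cdot\rangle_{H}$. If your claim were available, Proposition \ref{prop P2} would be an immediate corollary of Proposition \ref{prop P1}, and the corollary following it (which treats tensor-summability and the existence of a scalar quadratic variation as two independent hypotheses) would be vacuous; the way the paper is organized indicates the authors do not take this implication for granted.

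The paper's own proof avoids the tensor product altogether. After reducing by decomposition and localization to a square integrable martingale (the bounded variation part contributing zero scalar covariation), it runs the same integration-by-parts scheme as in the proof of Proposition \ref{prop P1}, but with the bilinear form taken to be the $H$ inner product: writing $\|\X_{s+\epsilon}-\X_{s}\|_{H}^{2}=\|\X_{s+\epsilon}\|_{H}^{2}-\|\X_{s}\|_{H}^{2}-2\langle\X_{s},\X_{s+\epsilon}-\X_{s}\rangle_{H}$ and using the summability of $\X$ with respect to $\langle\cdot,\cdot\rangle_{H}$ (Remark 14.b) of Chapter 6.23 of \cite{dincuvisi}) to identify the limit of the cross term with $\int_{0}^{t}\langle\X_{s},d\X_{s}\rangle_{H}$, yielding $[\X]^{\R}_{t}=\|\X_{t}\|_{H}^{2}-\|\X_0\|_H^2-2\int_{0}^{t}\langle\X_{s},d\X_{s}\rangle_{H}$ directly. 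To repair your argument you would either have to supply a genuine proof of tensor-summability for this class of processes, or follow the scalar route.
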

A sketch of the proof of the two propositions above are given in the Appendix.
A consequence of Proposition \ref{prop P1} and item 2 of Remark \ref{rem RE1}
is the following.
\begin{cor} \label{C18} 
Let $\X$ be a Banach space valued process which is semilocally summable
 with respect to the tenso product. 
If $\X$ has a scalar quadratic variation, it admits a tensor
 quadratic variation process which has bounded variation. 
\end{cor}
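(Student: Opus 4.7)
The statement is a short consequence of Proposition \ref{prop P1} together with item 2 of Remark \ref{rem RE1}, so the plan is essentially to chain the two. I would first invoke Proposition \ref{prop P1} with $B_1 = B_2 = B$ and both bilinear forms $(a,b) \mapsto a\otimes b$, $(a,b) \mapsto b\otimes a$: the semilocal summability hypothesis on $\X$ with respect to the tensor products guarantees the existence of the tensor quadratic variation $[\X]^{\otimes}$ as a $(B\hat{\otimes}_\pi B)$-valued ucp limit of $[\X]^{\otimes,\epsilon}$. The scalar quadratic variation $[\X]^{\R}$ exists by hypothesis.

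Once both objects are at hand, the bounded variation statement follows from item 2 of Remark \ref{rem RE1} applied with $\Y = \X$. I would spell it out briefly: for each $\epsilon > 0$, by the fundamental property of the projective norm $\pi(a\otimes b) = \|a\|_{B}\|b\|_{B}$, the total variation of the $(B\hat{\otimes}_\pi B)$-valued path $t \mapsto [\X]^{\otimes,\epsilon}_t$ is majorized by
\begin{equation*}
\int_0^t \frac{\pi\bigl((\X_{s+\epsilon} - \X_s)\otimes(\X_{s+\epsilon} - \X_s)\bigr)}{\epsilon}\, ds = \int_0^t \frac{\|\X_{s+\epsilon} - \X_s\|_{B}^2}{\epsilon}\, ds = [\X]^{\R,\epsilon}_t.
\end{equation*}
Passing to the ucp limit as $\epsilon \downarrow 0$, the right-hand side converges to the continuous increasing process $[\X]^{\R}$, while the left-hand side dominates the total variation of the limit $[\X]^{\otimes}$ (by lower semicontinuity of the total variation with respect to the strong topology on $B\hat{\otimes}_\pi B$). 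Hence $[\X]^{\otimes}$ has bounded variation, with total variation dominated by $[\X]^{\R}$.

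There is no real obstacle here: the key ingredients (existence of $[\X]^{\otimes}$ via Proposition \ref{prop P1}, the projective norm identity, and lower semicontinuity of the total variation functional) are either granted by the preceding material or are standard facts about vector-valued BV functions. The only point worth a line of care is the passage to the limit for the total variation, which I would handle by testing against functionals $\phi \in (B\hat{\otimes}_\pi B)^{\ast}$ with $\|\phi\| \leq 1$ and using the ucp convergence recalled in \eqref{eqLLL}.
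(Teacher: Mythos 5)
Your proposal is correct and matches the paper's route exactly: the corollary is an immediate combination of Proposition \ref{prop P1} (semilocal summability with respect to both tensor bilinear forms gives the tensor quadratic variation) with item 2 of Remark \ref{rem RE1} (when both the scalar and tensor quadratic variations exist, the latter has total variation dominated by the former). Your extra justification of the Remark via the identity $\pi(a\otimes b)=\|a\|_{B}\|b\|_{B}$ and lower semicontinuity of the total variation under the ucp limit is sound and fills in a step the paper leaves implicit.
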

 
\begin{rem} \label{RCovDPZ}
 The tensor quadratic variation can be linked to
 the one of \cite{dpz}; see Chapter 6 in \cite{DGR} for details.
Let $H$ be a separable Hilbert space. If $\mathbb{V}$ is an $H$-valued 
$Q$-Brownian motion with $Tr(Q)<+\infty$ (see \cite{dpz} section 4),
 then $\mathbb{V}$ admits a scalar quadratic variation
 $[\mathbb{V}]_{t}^{\R}=t\,Tr(Q)$ and a tensor 
quadratic variation $[\mathbb{V}]^{\otimes}_{t}=tq$ where $q$ is
 the tensor associated to the nuclear operator $tQ$.
\end{rem}
%
 We have already observed 
that $W(\cdot)$ is not a $C([-\tau,0])$-valued semimartingale.
Unfortunately,  the window process $W(\cdot)$ associated with 
a real Brownian motion $W$, 
does not even admit  a scalar quadratic variation. 
In fact the limit of 
\begin{equation}		\label{eqE45E}
\int_{0}^{t}\frac{\|W_{s+\epsilon}(\cdot)-W_{s}
(\cdot)\|^{2}_{C([-\tau,0])}}{\epsilon}\,ds	\; ,		\quad t\in [0,T] \; ,
\end{equation}
for $\epsilon$ going to zero does not converge, as we will see in Proposition \ref{pr WBMNOQV}. 
This suggests that when $\X$ is a window process, the tensor 
quadratic variation is not the suitable object in order to perform 
stochastic calculus.
Let $\X$ (resp. $\Y$) be a $B_{1}$ (resp. $B_{2}$)-valued process. In
 Definition \ref{def CHICOV} we will introduce a notion of covariation of
 $(\X,\Y)$ (resp. quadratic variation of $\X$ when $\X=\Y$) 
which generalizes the tensor covariation (resp. tensor quadratic variation). This will be called 
$\chi$-covariation (resp. $\chi$-quadratic variation) in reference to a topological subspace $\chi$ of  the dual 
of $B_{1} \hat{\otimes}_{\pi} B_{2}$ (resp. $B_{1} \hat{\otimes}_{\pi} B_{2}$ with $B_{1}=B_{2}$). 
We will suppose in particular that
\begin{equation}		\label{eq DFNTENS1}
 \frac{1}{\epsilon} \int_{0}^{t} \langle \phi, (\X_{s+\epsilon}-\X_{s})\otimes (\Y_{s+\epsilon}-\Y_{s}) \rangle\, ds 
\end{equation}
converges for every $\phi\in \chi$ for every $t \in [0,T]$. 
If $\Omega$ were a singleton (the processes being deterministic) and $\chi$
 would coincides with the whole space 
 $(B_{1}\hat{\otimes}_{\pi}B_{2})^{\ast}$ then previous convergence is the one related to the weak star topology in $(B_{1}\hat{\otimes}_{\pi}B_{2})^{\ast\ast}$.\\ 
Our $\chi$-covariation generalizes the concept of tensor
covariation at two levels. 

$\bullet$ First we replace the (strong) convergence of 
\eqref{eq DFNTENS} with a weak star type topology convergence of \eqref{eq DFNTENS1}.

$\bullet$ Secondly the choice of a suitable subspace $\chi$ of 
 $(B_{1}\hat{\otimes}_{\pi}B_{2})^{\ast}$ gives a degree of freedom.
 For instance, 
 compatibly with \eqref{eqE45E}, a window Brownian motion $\X = W(\cdot)$ admits a $\chi$-
quadratic variation only for strict
subspaces $\chi$. 

When $\chi$ equals the whole space $(B_{1} \hat{\otimes}_{\pi} B_{2})^{\ast}$ (resp. $(B_1\hat{\otimes}_{\pi}B_1)^{\ast}$) this will be called 
\textbf{global covariation} (resp. \textbf{global quadratic variation}). This situation corresponds for us to the \textit{elementary situation}.\\
Let $B_{1}=B_{2}$ be the finite dimensional space $\R^{n}$ and $\X=(X^1,\ldots,X^n)$ and $\Y=(Y^1,\ldots, Y^n)$ with values in $\R^n$, 
Corollary \ref{cor finD} says that
$(\X, \Y)$ admits all its mutual brackets (i.e. $[X^i,Y^j]$ exists for all $1\leq i,j\leq n$) if and only if $\X$ and $\Y$ have a global covariation. 
It is well-known that, 
in that case, $(B_{1} \hat{\otimes}_{\pi} B_{2})^{\ast}$ can be identified with the space of matrix $\mathbb{M}_{n\times n}(\R)$. 
If $\chi$ is finite dimensional, then Proposition \ref{pr FinDim} gives a simple characterization for $\X$ to have a $\chi$-quadratic variation.\\
Propositions \ref{prop P1}, \ref{prop P2}, \ref{pr TGH78}
and Remark \ref{RCovDPZ}
will imply that
whenever $\X$ admits one of the classical quadratic variations
(in the sense of \cite{dpz, mp, dincuvisi}), 
it admits a global quadratic variation and they are essentially equal.
In 
 this paper we calculate the $\chi$-covariation of Banach space valued processes in various situations with a particular attention for window processes associated 
to real finite quadratic variation processes, for instance semimartingales, Dirichlet processes, bifractional Brownian motion.

The notion of covariation intervenes in Banach space valued stochastic calculus
 for semimartingales, 
especially via It\^o type formula, see for \cite{dincuvisi} and \cite{mp}.
An important result of this paper is an It\^o formula for Banach space valued processes admitting a $\chi$-quadratic variation, see Theorem \ref{thm ITONOM}. 
This generalizes the following formula, valid for real valued processes which is stated below, 
see \cite{rv2}. 
Let $X$ be a real finite quadratic variation process and $f\in C^{1,2}([0,T]\times \R)$. Then the forward integral
 $\int_{0}^{\cdot}\partial_{x}f(s,X_{s})d^{-}X_{s}$ exists and 
\begin{equation} \label{FITO}
f(t,X_{t})=f(0,X_{0})+\int_{0}^{t}\partial_{s}f(s,X_{s})ds
+\int_{0}^{t} \partial_{x}f(s,X_{s})d^{-}X_{s}+\frac{1}{2}\int_{0}^{t}\partial_{xx}^{2}f(s,X_{x})d[X]_{s}  \quad t\in [0,T]. 
\end{equation}
\cite{fo} gives a similar formula in the discretization approach instead regularization.\\
For that purpose, let $\Y$ (resp. $\X$) be a $B^{\ast}$-valued strongly measurable with a.s. bounded paths 
(resp. $B$-valued continuous) process, $B$ denoting a separable Banach space; 
we define a real valued forward-type integral $\int_{0}^{t} \prescript{}{B^{\ast} }{\langle}Y, d^{-}X\rangle_{B}$, 
see Definition \ref{def integ fwd}.
We emphasize that Theorem \ref{thm ITONOM} constitutes a generalization
 of the It\^o formula in \cite{mp}, section 3.7, 
(see also \cite{dincuvisi}) for two reasons.
First, taking  $\chi=(B\hat{\otimes}_{\pi}B)^{\ast}$, i.e. the
full space,  the integrator 
processes $\X$ that we consider are more general than those in
the class considered in \cite{mp} or \cite{dincuvisi}.
The second, more important reason, is the use of a  space $\chi$ which gives a supplementary degree of freedom.\\
In the final Section \ref{Sec6}, we introduce two applications of our infinite dimensional stochastic calculus. 
 That section concentrates on window processes, which first motivated our
 general construction. 
In Section \ref{6.1} we discuss an application of the It\^o formula to anticipating calculus in a framework for which Malliavin calculus cannot be used
necessarily. 
In Section \ref{SIDPDE}, we discuss the application to
a representation result of \emph{Clark-Ocone type} for not necessarily semimartingales with finite quadratic variation, including zero quadratic variation.
Let $X$ be a continuous stochastic process with  quadratic variation $[X]_t=\sigma^2 t, \sigma \ge 0$. 
Our It\^o formula is one basic ingredient to prove a Clark-Ocone type result for path dependent real random variables
 of the type $h:=H(X_{T}(\cdot))$ with $H:C([-T,0])\longrightarrow \R$. 
We are interested in natural sufficient conditions to decompose $h$ into the sum of a real number $H_{0}$ and a forward integral $\int_0^T \xi_t d^-X_t$. 
Suppose that $u\in C^{1,2}\left([0,T[\times C([-T,0]) \right)$ is a solution of an infinite dimensional partial differential equation (PDE) of the type
\begin{equation}		\label{eq SYST} 
\begin{dcases}
\partial_{t}u(t,\eta)+\prescript{ `` }{ }{\int_{]-t,0]}} D^{\perp}u\,(t,\eta)\; d\eta^{''} +\frac{\sigma^2 }{2} \langle D^{2}u\,(t,\eta)\; ,\; \1_{D_t}\rangle=0  & \\
u(T,\eta)  =H(\eta),   &
\end{dcases}
\end{equation}
where 
$
\1_{D_t} (x,y):=\left\{ 
\begin{array}{ll}
1  & \textrm{if } x=y, \; x,y\in [-t,0]\\
0 & \textrm{otherwise}
\end{array}
\right.
$ and $D^{\perp}u (t,\eta):=Du\, (t,\eta)- Du (t,\eta)(\{0 \})\delta_0$; in fact
$Du(t,\eta)$ (resp. $D^{2}u(t,\eta)$) denotes the first (resp. second) order Fr\'echet derivatives of $u$ with
respect to $\eta$. 
A proper notion of solution for \eqref{eq SYST} will be given in
 Definition \ref{DefSol}. 
Of course, the integral ``$\int_{]-t,0]}D^{\perp}u\,(t,\eta)\; d\eta$'' has to be suitably defined.
At this stage we only say that supposing, for each $(t,\eta)$,  $D^{\perp} u\, (t,\eta)$ 
 absolutely continuous with respect to Lebesgue measure and 
that its Radon–Nikodym derivative has bounded variation,
 then $\int_{]-t,0]}D^{\perp}u\,(t,\eta)\; d\eta$ is well-defined by an integration by parts, see Notation \ref{not IBPCONTFUNC}. 
The term $\langle D^{2}u\,(t,\eta)\; ,\; \1_{D_t}\rangle$ indicates the evaluation of the second order derivative on the increasing 
diagonal of the square $[-t,0]^{2}$, provided that $D^{2}u(t,\eta)$ is
 a Borel signed  measure on $[-T,0]^2$. 
Our It\^o formula, i.e. Theorem \ref{thm ITONOM}, allows in fact to get the mentioned representation above 
with $H_{0}=u(0,X_{0}(\cdot))$, $\xi_{t}=D^{\delta_{0}}u(t, X_{t}(\cdot)):=Du(t, X_{t}(\cdot))(\{0\})$. 
In Chapter 9 of \cite{DGR} we construct explicitly solutions of the infinite dimensional PDE 
\eqref{eq SYST} when $H$ has some smooth regularity in $L^2([-\tau,0])$ or when it depends (even non smoothly) on a finite number of Wiener integrals. \\
A third application of Theorem  \ref{thm ITONOM} appears in \cite{RusFab}. In particular, those two authors calculate and 
use the $\chi$-quadratic variation of a mild solution of a 
stochastic PDE which generally is not a finite quadratic variation process in the sense of \cite{dpz}.

The paper is organized as follows.  
Section \ref{sec: pre} contains general notations and some preliminary results. 
Section \ref{sec: chiqv} will be devoted to the definition of $\chi$-covariation and $\chi$-quadratic variation and some related propositions.
Section \ref{sec: evaluation} provides some explicit calculations   related  to window processes. 
Section \ref{sec:ito-int} is devoted to the definition of a forward integral for Banach space valued processes and related It\^o formula.
The final section \ref{Sec6} is devoted to applications
of our  It\^o formula to the case of window processes.
%

\section{Preliminaries}		\label{sec: pre}
Throughout this paper we will denote by 
$(\Omega,\shf,\P)$ a fixed probability space, equipped with
a given filtration $\mathbb{F}=(\mathcal{F}_{t})_{t\geq 0}$ 
fulfilling the usual conditions. 
Let $K$ be a compact space; $C(K)$
denotes the linear space of real continuous functions defined on $K$,
equipped with the uniform norm denoted by $\|\cdot\|_{\infty}$. 
$\mathcal{M}(K)$ will denote the dual 
space $C(K)^{\ast}$, i.e. the set of finite signed Borel measures on $K$. 
  In particular, if   $a < b$ are two real numbers, $C([a,b])$ will denote the
 Banach linear  space of real continuous functions. 
 If $E$ is a topological space, $\mathcal{B}or(E)$ will denote its Borel $\sigma$-algebra.
The topological dual (resp. bidual) space of $B$ 
will be denoted by $B^{\ast}$ (resp. $B^{**}$). If $\phi$ is a linear continuous functional on $B$, 
we shall denote the value of $\phi$ of an element $b\in B$ either by $\phi(b)$ or $\langle \phi, b \rangle$ or even 
$\prescript{}{B^{\ast}}{\langle} \phi, b\rangle_{B}$.
 Throughout the paper the symbols $\langle\cdot,\cdot\rangle$ will always
denote some type of duality that will change depending on the context. 
Let $E,F,G$ be Banach spaces. $L(E;F)$ stands for the Banach space of linear bounded maps from $E$ to $F$. We shall denote the space of $\R$-valued bounded bilinear forms on the product $E\times F$ by $\mathcal{B}(E,F)$
 with the norm given by $\|\phi\|_{\mathcal{B}}=\sup\{ | \phi(e,f)  |\,:\, \|e\|_{E}\leq 1; \|f\|_{F} \leq 1 \}$. Our principal references about functional analysis and about Banach spaces topologies are \cite{ds, bre}.\\
$T  $ will always be a positive fixed real number.
The capital letters $\X,\Y,\Z$ (resp. $X,Y,Z$) will generally denote
 Banach space   (resp. real) valued processes indexed by the time variable
 $t\in [0,T]$.
A stochastic process $\X$ will  also be denoted by $(\X_{t})_{t\in[0,T]}$.
A $B$-valued (resp. $\R$-valued) stochastic process
 $\X:\Omega\times [0,T]\rightarrow B$ (resp. $\X:\Omega\times [0,T]\rightarrow \R$) 
is said to be measurable if $\X:\Omega\times [0,T]\longrightarrow B$ (resp. $\X:\Omega\times [0,T]\rightarrow \R$) is measurable with respect to the $\sigma$-algebras $\mathcal{F}\otimes \mathcal{B}or([0,T])$ and $\mathcal{B}or(B)$ (resp. $\mathcal{B}or(\R)$). 
We recall that 
$\mathbb{X}: \Omega\times [0,T]\longrightarrow B$ (resp. $\R$) is said to be \textbf{strongly measurable} (or \textbf{measurable in the Bochner sense}) if it is the limit of measurable countable valued functions. If $\X$ is measurable,
  c\`adl\`ag and $B$ is separable then $\X$ is strongly measurable. 
If $B$ is finite dimensional then a measurable process $\X$ is also strongly measurable. 
All the processes indexed by  $[0,T]$ 
will be naturally prolonged by continuity setting $\X_{t}=\X_{0}$ for
$t\leq 0$ and $\X_{t}=\X_{T}$ for $t\geq T$.
 A sequence $(\X^{n})_{n \in \N}$  of continuous $B$-valued processes
indexed by $[0,T]$, will be said to converge \textbf{ucp}
(\textbf{uniformly convergence in probability}) to a process $\X$ if
$\sup_{0\leq t\leq T}\|\X^{n}-\X\|_{B}$ converges to zero in probability
when $n \rightarrow \infty$. The space $\mathscr{C}([0,T])$ 
will denote the linear space of continuous real processes; it is a Fr\'echet space 
(or $F$-space shortly) if 
equipped with the metric 
$d(X,Y)=\mathbb{E}\left[ \sup_{t\in [0,T]}|X_{t}-Y_{t}|\wedge 1\right]$ which governs the ucp topology, 
see Definition II.1.10 in \cite{ds}. For more details about $F$-spaces and their properties see section II.1 in \cite{ds}.\\
%
%
%
%
%
%
A fundamental property of the tensor product of Banach spaces which will be used in the whole 
paper is the following.
If $\tilde{T}:E\times F \rightarrow \R$ is a continuous
 bilinear form, there exists a unique bounded linear operator 
$T:E\hat{\otimes}F \rightarrow \R$ satisfying 
$\prescript{}{(E\hat{\otimes}_{\pi}F)^{\ast}}{\langle} T,e\otimes f
 \rangle_{E\hat{\otimes}_{\pi}F} =T(e\otimes f)=\tilde{T}(e,f)$ for every $e\in E$, $f\in F$. We observe moreover that there exists a canonical
 identification between $\mathcal{B}(E,F)$ and $L(E;F^\ast)$ which identifies $\tilde{T}$ with $\bar{T}:E\rightarrow F^{\ast}$ by $\tilde{T}(e,f)=\bar{T}(e)(f)$. 
Summarizing, there is an isometric isomorphism between the dual space of the projective tensor product and the space of bounded bilinear forms equipped with the usual norm, i.e. 
\begin{equation}				\label{eq IDBILPIDUAL}
(E\hat{\otimes}_{\pi} F)^{\ast}\cong \mathcal{B}(E,F)\cong L(E;F^{\ast}) \; .
\end{equation}
With this identification, the action of a bounded bilinear form $T$ as a bounded linear functional on $E\hat{\otimes}_{\pi}F$ is given by
\begin{equation}
\prescript{}{(E\hat{\otimes}_{\pi}F)^{\ast}}{\langle} T, \sum_{i=1}^{n}x_{i}\otimes y_{i}\rangle_{E\hat{\otimes}_{\pi}F} = 
T\left(\sum_{i=1}^{n}x_{i}\otimes y_{i} \right)=\sum_{i=1}^{n}\tilde{T}(x_{i},y_{i})=\sum _{i=1}^{n}\bar{T}(x_{i})(y_{i}).
\end{equation}
In the sequel that identification will  often be used without explicit mention.\\
The importance of tensor product spaces and their duals 
is justified  first of all by identification 
\eqref{eq IDBILPIDUAL}: indeed the second order Fr\'echet derivative of a real function defined on a Banach space $E$ belongs to $\mathcal{B}(E,E)$. 
We state a useful result involving Hilbert tensor products and 
Hilbert direct sums. 
\begin{prop}		\label{pr DIRSUMHS}
Let $E$ and $F_{1}$, $F_{2}$ be Hilbert spaces.
We consider $F=F_{1}\oplus F_{2}$ equipped with the Hilbert direct norm.
Then $E\hat{\otimes}_{h}F=(E\hat{\otimes}_{h}
F_{1})\oplus (E\hat{\otimes}_{h} F_{2})$.
\end{prop}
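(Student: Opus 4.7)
The plan is to exhibit canonical isometric embeddings of $X\hat{\otimes}_{h}Y_{1}$ and $X\hat{\otimes}_{h}Y_{2}$ into $X\hat{\otimes}_{h}Y$, show that their images are mutually orthogonal closed subspaces, and that their algebraic sum is dense. Since orthogonality plus density of the sum (together with closedness obtained from the isometry) forces the Hilbert direct sum decomposition, the statement will follow.

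First I would define, for $i=1,2$, the bilinear map $X\times Y_{i}\to X\hat{\otimes}_{h}Y$ by $(x,y)\mapsto x\otimes j_{i}(y)$, where $j_{i}:Y_{i}\hookrightarrow Y$ is the canonical isometric inclusion. By the universal property of the Hilbert tensor product, this extends to a bounded linear map $\iota_{i}:X\hat{\otimes}_{h}Y_{i}\to X\hat{\otimes}_{h}Y$. On elementary tensors, the inner product computation
\begin{equation*}
\langle \iota_{i}(x\otimes y),\iota_{i}(x'\otimes y')\rangle_{X\hat{\otimes}_{h}Y}=\langle x,x'\rangle_{X}\langle j_{i}(y),j_{i}(y')\rangle_{Y}=\langle x,x'\rangle_{X}\langle y,y'\rangle_{Y_{i}}
\end{equation*}
shows that $\iota_{i}$ preserves inner products on the algebraic tensor product, and hence extends to a linear isometry on $X\hat{\otimes}_{h}Y_{i}$. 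In particular its image $H_{i}:=\iota_{i}(X\hat{\otimes}_{h}Y_{i})$ is a closed subspace of $X\hat{\otimes}_{h}Y$.

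Next I would verify orthogonality $H_{1}\perp H_{2}$. It suffices to check on elementary tensors: for $x,x'\in X$, $y_{1}\in Y_{1}$, $y_{2}\in Y_{2}$,
\begin{equation*}
\langle \iota_{1}(x\otimes y_{1}),\iota_{2}(x'\otimes y_{2})\rangle_{X\hat{\otimes}_{h}Y}=\langle x,x'\rangle_{X}\,\langle j_{1}(y_{1}),j_{2}(y_{2})\rangle_{Y}=0,
\end{equation*}
since $j_{1}(Y_{1})\perp j_{2}(Y_{2})$ in the Hilbert direct sum $Y=Y_{1}\oplus Y_{2}$. By bilinearity and continuity, orthogonality passes to the closures, so $H_{1}\oplus H_{2}$ is a well-defined closed orthogonal sum inside $X\hat{\otimes}_{h}Y$.

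It remains to show $H_{1}+H_{2}$ is dense in $X\hat{\otimes}_{h}Y$. For any elementary tensor $x\otimes y\in X\hat{\otimes}_{h}Y$, write $y=y_{1}+y_{2}$ with $y_{i}\in Y_{i}$; then $x\otimes y=\iota_{1}(x\otimes y_{1})+\iota_{2}(x\otimes y_{2})\in H_{1}+H_{2}$. Elementary tensors span a dense subspace of $X\hat{\otimes}_{h}Y$, so $H_{1}+H_{2}$ is dense. Combined with the fact that $H_{1}\oplus H_{2}$ is closed (being an orthogonal sum of two closed subspaces of a Hilbert space), we obtain $X\hat{\otimes}_{h}Y=H_{1}\oplus H_{2}$, which is the claimed isometric identification. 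The only mild subtlety is the density step, but it is really automatic once one remembers that the algebraic tensor product $X\otimes Y$ is dense in the Hilbert completion; everything else is a direct inner-product computation.
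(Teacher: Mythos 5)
Your proof is correct and follows essentially the same route as the paper's: isometric inclusion of each $X\hat{\otimes}_{h}Y_{i}$ into $X\hat{\otimes}_{h}Y$ compatible with the Hilbert norms, followed by density of the algebraic tensor product $X\otimes Y$ inside the orthogonal sum. You simply spell out in full (universal property, inner-product preservation, orthogonality of the images, closedness of the orthogonal sum) what the paper leaves as a brief sketch.
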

\begin{proof} \
Since $E\otimes F_{i}\subset E\otimes F$, $i=1,2$ we can write 
$E\otimes_{h} F_{i}\subset X\otimes_{h} Y$ and so 
\begin{equation}		\label{eq RTG4}
(E\hat{\otimes}_{h}
F_{1})\oplus (E\hat{\otimes}_{h} F_{2})
\subset E\hat{\otimes}_{h}F
\end{equation}
Since we handle with Hilbert norms, it is easy to show that the norm topology of 
$E\hat{\otimes}_{h} F_{1}$ and $E\hat{\otimes}_{h} F_{2}$ is the same as
 the one induced by $E\hat{\otimes}_{h}F$.\\
It remains to show the converse inclusion of \eqref{eq RTG4}. This follows because 
$E \otimes F\subset E\hat{\otimes}_{h} F_{1}\oplus E\hat{\otimes}_{h} F_{2}$.
\end{proof}
We recall another important property.
\begin{equation}		\label{eq 1.15bis}
\mathcal{M}([-\tau,0]^{2})= 
\left(C([-\tau,0]^{2})\right)^{\ast}\subset
\left(C([-\tau,0])\hat{\otimes}_{\pi}C([-\tau,0])\right)^{\ast}\cong \mathcal{B}(C([-\tau,0]),C([-\tau,0])) \, .
\end{equation}
With every $\mu\in \mathcal{M}([-\tau,0]^{2}) $ we can associate a unique operator $T^{\mu}\in \mathcal{B}(C([-\tau,0]), C([-\tau,0])) $ defined by 
$T^{\mu}(f,g)=\int_{[-\tau,0]^{2}}f(x)g(y)\mu(dx,dy)$.\\
Let $\eta_{1}$, $\eta_{2}$ be two elements in $C([-\tau,0])$. The element $\eta_{1}\otimes \eta_{2}$ in the 
algebraic tensor product $C([-\tau,0])\otimes^{2}$ will be identified with the element $\eta$ in $C([-\tau,0]^{2})$ 
defined by $\eta(x,y)=\eta_{1}(x)\eta_{2}(y)$ for all $x$, $y$ in $[-\tau,0]$. So if $\mu$ is a measure on
 $\mathcal{M}([-\tau,0]^{2})$, the pairing duality $\prescript{}{\mathcal{M}([-\tau,0]^{2})}{\langle} \mu, \eta_{1}\otimes \eta_{2}\rangle_{C([-\tau,0]^{2})} $ has to be understood as the following pairing duality: 
\begin{equation}	\label{eq PDUALCM}
\prescript{}{\mathcal{M}([-\tau,0]^{2})}{\langle} \mu, \eta\rangle_{C([-\tau,0]^{2})} =\int_{[-\tau,0]^{2}}\eta(x,y)\mu(dx,dy)=\int_{[-\tau,0]^{2}}\eta_{1}(x)\eta_{2}(y)\mu(dx,dy)  \; .
\end{equation}
In the It\^o formula for $B$ valued processes at Section \ref{sec:ito-int}, naturally appear the first and second order Fr\'echet derivatives of some functionals defined on a general Banach space $B$. 
When $B=C([-\tau,0])$, the first derivative belongs to  $\mathcal{M}([-\tau,0])$ and second derivative mostly belongs to $\mathcal{M}([-\tau,0]^{2})$.
In particular in Sections \ref{sec: evaluation} and \ref{Sec6} those spaces and their subsets appear in relation with window processes. 
%
We introduce a notation which has been already used in the Introduction.
\begin{nota}    	\label{nota2.3}
\begin{enumerate}
\item If $a \in \R$, we remind that $\delta_a$ will denote the  Dirac measure
 concentrated at $a$, so $\delta_0$ stands for the  Dirac measure at zero.
\item Let $\mu$ be a measure on $\mathcal{M}([-\tau,0])$, $\tau>0$. 
$\mu^{\delta_0}$ will denote the scalar defined by $\mu(\{0\})$ and 
$\mu^\perp$ will denote the measure defined by $\mu-\mu^{\delta_0}\delta_{0}$. 
If $\mu^\perp$ is absolutely continuous with respect to Lebesgue measure, 
its density will be denoted with the same letter $\mu^\perp$.
\end{enumerate}
\end{nota}
Let $B$ be a Banach space and $I$ be a real interval, 
typically $I = [0,T]$ or $I = [0,T[$.  
 A function $F:I \times B \longrightarrow 
\mathbb{R} $, is said to be Fr\'echet of class $C^{1,2}(I \times B)$, 
if the following properties are fulfilled.
\begin{itemize}
\item $F$ is once Fr\'echet continuously differentiable; the partial derivative
 with  respect to $t$ will be denoted by $\partial_{t} F : I\times B 
\longrightarrow \mathbb{R}$;
\item for any $t \in I$,  $\eta \mapsto DF(t,\eta)$ is of class $C^1$
where $DF:I \times B \longrightarrow B^*$ denotes the Fr\'echet 
derivative with respect to the second argument; 
\item the second order Fr\'echet derivative with respect to the 
second argument $D^2F: I \times B \rightarrow (B\hat{\otimes}_\pi B)^\ast $
is  continuous.
\end{itemize}
Similar notations are self-explained as for instance 
 or $C^{1,1}(I \times B)$.
 
\section{Chi-covariation and Chi-quadratic variation}	
	\label{sec: chiqv}

\subsection{Notion and  examples of Chi-subspaces}
\begin{dfn}		\label{DefChiCOV}
Let $E$ be a Banach space. A Banach space $\chi$  included in $E$ will be said a
{\bf continuously embedded Banach subspace} of $E$
if the inclusion of $\chi$ into $E$ is continuous. \\
If $E = (B_{1}\hat{\otimes}_{\pi}B_{2})^{\ast} $ then
$\chi$ will be said {\bf Chi-subspace} (of $E$). 
\end{dfn}

\begin{rem} 	\label{pro 3.2}
\begin{enumerate}
\item Let $\chi$ be a linear subspace 
 of $(B_{1}\hat{\otimes}_{\pi}B_{2})^{\ast}$ with Banach structure.
$\chi$ is a Chi-subspace 
if and only if 
$
\|\cdot\|_{ (B_{1}\hat{\otimes}_{\pi}B_{2})^{\ast}}\leq \|\cdot\|_{\chi}$,
where $\|\cdot\|_{\chi}$ is a norm related to the topology of $\chi$.
\item Any continuously embedded Banach subspace of a Chi-subspace
  is a Chi-subspace.
\item Let $\chi_{1}, \cdots, \chi_{n}$ be Chi-subspaces 
such that, for any $1 \le i \neq j \le n$, $\chi_i \bigcap \chi_j =
\{0\}$  where $0$ is the zero of $(B_{1}\hat{\otimes}_{\pi}B_{2})^{\ast}$.
Then the normed space $\chi=\chi_{1}\oplus \cdots \oplus \chi_{n}$ 
is a Chi-subspace.
\end{enumerate}
\end{rem}
The last item allows to express a Chi-subspace of $(B_{1}\hat{\otimes}_{\pi}B_{2})^{\ast}$ as direct sum of Chi-subspaces (of $(B_{1}\hat{\otimes}_{\pi}B_{2})^{\ast}$). 
This, together with Proposition \ref{prop somma diretta di qv}, helps to evaluate the
$\chi$-covariations and the $\chi$-quadratic variations of different processes.

Before providing the definition of the so-called $\chi$-covariation
 of a couple of a $B_{1}$-valued and a $B_{2}$-valued stochastic processes,
 we will give 
some examples of  Chi-subspaces that  we will use in the
paper. 
\begin{ese} {Let $B_{1}$, $B_{2}$ be two Banach spaces.}		\label{ex 4.3}
\begin{itemize}
\item 
$\chi= (B_{1}\hat{\otimes}_{\pi}B_{2})^{\ast}$.
This appears in our elementary situation anticipated in the Introduction, see also Proposition \ref{pr TGH78}.
\end{itemize}
\end{ese}
\begin{ese}{Let $B_{1}=B_{2}=C([-\tau,0])$.}\\	\label{ex 4.5}
This is the natural value space for all the windows of continuous processes.
We list some examples of Chi-subspaces $\chi$ for which some window processes have a $\chi$-covariation or a $\chi$-quadratic variation. Moreover those $\chi$-covariation and $\chi$-quadratic variation 
will intervene in some applications stated at Section \ref{Sec6}.
 Our basic reference Chi-subspace of 
 $(C([-\tau,0]\hat{\otimes}_{\pi}C([-\tau,0]))^{\ast}$
will be the Banach space $\shm([-\tau,0]^{2})$ equipped with the usual total variation
norm,  denoted by $\Vert \cdot \Vert_{Var}$. 
The inequality in item 1 of Remark \ref{pro 3.2} is verified since 
$\|T^{\mu}\|_{(B\hat{\otimes}_{\pi}B)^{\ast}} = \sup_{\|f\|\leq1, \|g\|\leq 1} \left| T^{\mu} (f, g) \right|\leq \|\mu\|_{Var} $
for every $\mu\in \shm([-\tau,0]^{2})$.
All the other spaces considered in the sequel 
of the present example will be shown to be
continuously embedded Banach subspaces of $\shm([-\tau,0]^{2})$; by item 2
 of Remark \ref{pro 3.2} they are Chi-subspaces. 
Here is a list. Let $a, b$ two fixed given points in $[-\tau,0]$. 
\begin{itemize}
\item $L^{2}([-\tau,0]^{2})\cong L^{2}([-\tau,0])\hat{\otimes}_{h}^{2}$ is a Hilbert subspace of 
$\shm([-\tau,0]^{2})$, equipped with the norm derived from the usual scalar
 product. The Hilbert tensor product $L^{2}([-\tau,0])\hat{\otimes}_{h}^{2}$ will be 
always identified with $L^{2}([-\tau,0]^{2})$, conformally to a quite canonical procedure, see \cite{nevpg}, chapter 6.
\item   			
$\shd_{a,b}([-\tau,0]^{2})$ (shortly $\shd_{a,b}$) which denotes the one dimensional Hilbert space 
of the multiples of the Dirac measure concentrated at 
$(a,b)\in [-\tau,0]^{2}$, i.e.
\begin{equation}
\mathcal{D}_{a,b}([-\tau,0]^{2}):= \{ \mu \in \mathcal{M}([-\tau,0]^{2});\; s.t. \mu(dx,dy)=\lambda \,\delta_{a}(dx)\delta_{b}(dy) \textrm{ with } \lambda \in \mathbb{R} \}
\cong \shd_{a}\hat{\otimes}_{h} \shd_{b}\; . 	\label{eq-def Dij}
\end{equation}
If $\mu=\lambda \,\delta_{a}(dx)\delta_{b}(dy)$ then $\|\mu\|_{Var}=|\lambda|=\|\mu\|_{\shd_{a,b}}$.
\item 			
$\shd_{a}([-\tau,0])\hat{\otimes}_{h} L^{2}([-\tau,0])$ and $L^{2}([-\tau,0]) \hat{\otimes}_{h}   \shd_{a}([-\tau,0])$ where $\shd_{a}([-\tau,0])$ (shortly $\shd_{a}$) denotes the 			
one-dimensional space of multiples of the
 Dirac measure concentrated at $a\in [-\tau,0]$ , i.e. 
\begin{equation}			\label{eq-def Di}
\mathcal{D}_{a} ([-\tau,0]):= 
\{\mu \in \mathcal{M}([-\tau,0]);\,s.t. \mu(dx)=\lambda \,\delta_{a}(dx) \textrm{ with } \lambda\in \mathbb{R} \} \; .
\end{equation}
$\shd_{a}([-\tau,0])\hat{\otimes}_{h} L^{2}([-\tau,0])$ (resp. $L^{2}([-\tau,0]) \hat{\otimes}_{h}   \shd_{a}([-\tau,0])$) is a Hilbert subspace of $\shm([-\tau,0]^{2})$ and for a general element in this space
  $\mu=\lambda\delta_{a}(dx)\phi(y)dy$ (resp. $\mu=\lambda \phi(x)dx\delta_{a}(dy)$ ),
$\phi \in L^{2}([-\tau,0])$,  we have 
$ \|\mu\|_{Var}\leq \|\mu \|_{\shd_{a}([-\tau,0])\hat{\otimes}_{h} L^{2}([-\tau,0])} ( \mbox{resp. } \|\mu \|_{L^{2}([-\tau,0]) \hat{\otimes}_{h}   \shd_{a}([-\tau,0])} ) =|\lambda |\cdot \|\phi\|_{L^{2}}$.
\item $\chi^{0}([-\tau,0]^{2})$, $\chi^{0}$ shortly, which denotes the subspace of measures defined as 
$			
\chi^{0}([-\tau,0]^{2}):=( \mathcal{D}_{0}([-\tau,0]) \oplus L^{2}([-\tau,0]))\hat{\otimes}^{2}_{h}  
$. 
\begin{rem}
An element $\mu$ in $\chi^{0}([-\tau,0]^{2})$ can be uniquely decomposed as
$\mu= \phi_{1} + \phi_{2}\otimes  \delta_{0} + 
 \delta_{0}\otimes\phi_{3}+\lambda \delta_{0}\otimes\delta_{0},
$ where $\phi_{1}\in  L^{2}([-\tau,0]^{2})$, $\phi_{2}$, $\phi_{3}$ are functions in $L^{2}([-\tau,0])$ and $\lambda$ is a real number. We have $ \mu \left(\{0,0 \}  \right)=\lambda  $. 
\end{rem}
\item  $Diag([-\tau,0]^{2})$ (shortly $Diag$), will denote the subset 	of $\mathcal{M}([-\tau,0]^{2})$ defined as follows:
\begin{equation} 			\label{eq-def diag}
Diag([-\tau,0]^{2}):=\left\{\mu^{g} \in \mathcal{M}([-\tau,0]^{2})\, s.t.\,
\mu^{g}(dx,dy)=g(x)\delta_{y}(dx)dy;\, g \in L^{\infty}([-\tau,0])
\right\}
\; .
\end{equation}
$Diag([-\tau,0]^{2})$, equipped with the norm $\| \mu^{g} \|_{Diag([-\tau,0]^{2})}= \| g\|_{\infty}$, is a Banach space. 
Let $f$ be a function in $C([-\tau,0]^{2})$; the pairing duality \eqref{eq PDUALCM} between $f$ and $\mu(dx,dy)=g(x)\delta_{y}(dx)dy\in Diag$ gives
\begin{equation}			\label{eq-def dualita con misura diag}  
\prescript{}{C([-\tau,0]^{2})}{\langle} f,\mu \rangle_{ Diag([-\tau,0]^{2})}
= \int_{[-\tau,0]^{2}}f(x,y) g(x)\delta_{y}(dx)dy=\int_{-\tau}^{0}f(x,x)g(x)dx\; .
\end{equation}
\end{itemize}
\end{ese}
 A closed subspace of  $Diag([-\tau,0]^{2})$ is given below.
\begin{nota} \label{nnn}
We denote by  $Diag_{d}([-\tau,0]^{2})$ the subspace constituted by the 
measures $\mu^{g}\in Diag([-\tau,0]^{2})$ for which 
$g$ belongs to  the space  $D([-\tau,0])$ of the (classes of) bounded functions $g:[-\tau,0]\longrightarrow \R$ admitting a  c\`adl\`ag version.
\end{nota}

%
\subsection{Definition of $\chi$-covariation and some related results}		\label{notionChi}
Let $B_1$, $B_2$ and $B$ be three Banach spaces.
In this subsection, we introduce  the definition of $\chi$-covariation between a $B_{1}$-valued stochastic process $\X$ and a $B_{2}$-valued stochastic process $\Y$. 
 We remind that $\mathscr{C}([0,T])$ denotes the space of continuous processes equipped with the ucp topology.\\
Let $\X$ (resp. $\Y$) be $B_{1}$ (resp. $B_{2}$) valued stochastic process. 
Let $\chi$ be a Chi-subspace of $(B_{1}\hat{\otimes}_{\pi}B_{2})^{\ast}$ and $\epsilon > 0$. 
We denote by $[\X,\Y]^{\epsilon}$,
the following application
\begin{equation}		\label{eq Xepsilon}
[\X,\Y]^{\epsilon}:\chi\longrightarrow \mathscr{C}([0,T])
\quad \textrm{defined by}\quad
\phi
\mapsto
\left( \int_{0}^{t} \prescript{}{\chi}{\langle} \phi,
\frac{J\left(  \left(\X_{s+\epsilon}-\X_{s}\right)\otimes \left(\Y_{s+\epsilon}-\Y_{s}\right)  \right)}{\epsilon} 
\rangle_{\chi^{\ast}} \,ds
\right)_{t\in [0,T]},
\end{equation}
where $ J: B_{1}\hat{\otimes}_{\pi}B_{2} \longrightarrow (B_{1}\hat{\otimes}_{\pi}B_{2})^{\ast\ast}$ is the canonical injection between a space and its bidual. 
With application $[\X,\Y]^{\epsilon}$ it is possible to associate
 another one, 
denoted by $\widetilde{[\X,\Y]}^{\epsilon}$, defined by
\[
\widetilde{[\X,\Y]}^{\epsilon}(\omega,\cdot):[0,T]\longrightarrow \chi^{\ast}
\quad \textrm{such that}\quad
t\mapsto \left(\phi\mapsto
\int_{0}^{t}  \prescript{}{\chi}{\langle} \phi,\frac{J\left(  
  \left(\X_{s+\epsilon}(\omega)-\X_{s}(\omega)\right)\otimes 
\left(\Y_{s+\epsilon}(\omega)-\Y_{s}(\omega)\right) \right)}{\epsilon}
 \rangle_{\chi^{\ast}} \,ds\right) .
\]
\begin{rem} 
\begin{enumerate}
\item We recall that $\chi\subset(B_{1}\hat{\otimes}_{\pi}B_{2})^{\ast}$ implies 
$(B_{1}\hat{\otimes}_{\pi}B_{2})^{\ast\ast}\subset \chi^{\ast}$. 
\item As indicated, $\prescript{}{\chi}{\langle}\cdot,\cdot\rangle_{\chi^{\ast}}$ denotes the duality
between the space $\chi$ and its dual $\chi^{\ast}$. In fact by assumption,
 $\phi$ is an element of $\chi$ and element $J\left(
  \left(\X_{s+\epsilon}-\X_{s}\right)\otimes   \left(\Y_{s+\epsilon}-\Y_{s}\right) \right)$ naturally belongs to $(B_{1}\hat{\otimes}_{\pi}B_{2})^{\ast\ast}\subset \chi^{\ast}$. 
\item
With a slight abuse of notation, in the sequel the injection $J$ from $B_{1}\hat{\otimes}_{\pi}B_{2}$ to its bidual will be omitted. The tensor product 
$\left(\X_{s+\epsilon}-\X_{s}\right)\otimes \left(\Y_{s+\epsilon}-\Y_{s}\right) $ has to be considered as the element $J\left(   \left(\X_{s+\epsilon}-\X_{s}\right)
\otimes \left(\Y_{s+\epsilon}-\Y_{s}\right)   \right)$ which belongs to $\chi^{\ast}$.
\item
Suppose $B_{1}=B_{2}=B=C([-\tau,0])$ and let $\chi$ be a Chi-subspace.\\
An element of the type $\eta=\eta_{1}\otimes\eta_{2}$, $\eta_{1}$, $\eta_{2} \in B$, can be either considered 
as an element of the type $B\hat{\otimes}_{\pi}B\subset (B\hat{\otimes}_{\pi}B)^{\ast\ast}\subset \chi^{\ast}$ or as an element 
of $C([-\tau,0]^{2})$ defined by $\eta(x,y)=\eta_{1}(x)\eta_{2}(y)$.
When $\chi$ is indeed a closed subspace of $\mathcal{M}([\tau,0]^{2})$, then 
the pairing between $\chi$ and $\chi^{\ast}$ will be compatible with the pairing 
duality between $\mathcal{M}([\tau,0]^{2})$ and $C([-\tau,0]^{2})$ given by \eqref{eq PDUALCM}.
\end{enumerate}
\end{rem}
\begin{dfn}		\label{def CHICOV} 
Let $B_{1}$, $B_{2}$ be two Banach spaces and $\chi$ be a Chi-subspace of 
$(B_{1}\hat{\otimes}_{\pi} B_{2})^{\ast}$. 
Let $\X$ (resp. $\Y$) be a $B_{1}$ (resp. $B_{2}$) valued stochastic process. 
We say that {\bf $(\X, \Y)$ admits a $\chi$-covariation} if the following assumptions hold.
\begin{description}
\item[H1] For all sequence $(\epsilon_{n})$ it exists a subsequence $(\epsilon_{n_{k}})$ such that 
\begin{equation} 
\sup_{k}\int_{0}^{T} \sup_{\|\phi\|_{\chi}\leq 1}\left| \langle \phi,\frac{(\X_{s+\epsilon_{n_{k}}}-\X_{s})\otimes(\Y_{s+\epsilon_{n_{k}}}-\Y_{s})}{\epsilon_{n_{k}}}\rangle \right|ds
=\sup_{k}\int_{0}^{T} \frac{\left\| (\X_{s+\epsilon_{n_{k}}}-\X_{s})\otimes(\Y_{s+\epsilon_{n_{k}}}-\Y_{s})\right\|_{\chi^{\ast}} }{\epsilon_{n_{k}}} ds
\;< \infty\; a.s.
\end{equation}
\item[H2]
\begin{description}
\item{(i)} There exists an application $\chi\longrightarrow
  \mathscr{C}([0,T])$, denoted by $[\X,\Y]$, such that
\begin{equation}		\label{H2 cONDucp}
[\X,\Y]^{\epsilon}(\phi)\xrightarrow[\epsilon\longrightarrow 0_{+}]{ucp} [\X,\Y](\phi) 
\end{equation} 
for every $\phi \in \chi\subset
(B_{1}\hat{\otimes}_{\pi}B_{2})^{\ast}$.
\item{(ii)} 
There is a measurable process $\widetilde{[\X,\Y]}:\Omega\times [0,T]\longrightarrow \chi^{\ast}$, 
such that
\begin{itemize}
\item for almost all $\omega \in \Omega$, $\widetilde{[\X,\Y]}(\omega,\cdot)$ is a ( c\`adl\`ag) bounded variation function, 
\item 
$\widetilde{[\X,\Y]}(\cdot,t)(\phi)=[\X,\Y](\phi)(\cdot,t)$ a.s. for all $\phi\in \chi$, $t\in [0,T]$.
\end{itemize}
\end{description}
\end{description}
If $(\X, \Y)$ admits a $\chi$-covariation we will call
 $\chi$-\textbf{covariation} of $\X$ and $\Y$ the $\chi^{\ast}$-valued
process  $(\widetilde{[\X,\Y]})_{0\leq t\leq T}$.
By abuse of notation, $[\X,\Y]$ will also be called $\chi$-covariation
and it will be sometimes confused with $\widetilde{[\X,\Y]}$.
\end{dfn}
\begin{dfn}			\label{DChiQV}   
Let $\X=\Y$ be a $B$-valued stochastic process and $\chi$ be a Chi-subspace of $(B\hat{\otimes}_{\pi}B)^{\ast}$. 
The $\chi$-covariation $[\X,\X]$ (or $\widetilde{[\X,\X]}$) will also be denoted 
by $[\X]$ and $\widetilde{[\X]}$; it will be called \textbf{$\chi$-quadratic variation of $\X$} and we will say that $\X$ has a $\chi$-quadratic variation.
\end{dfn}
\begin{rem}		\label{rem PPO}
\begin{enumerate}
\item For every fixed $\phi\in \chi$, the processes $\widetilde{[\X,\Y]}(\cdot,t)(\phi)$ and $[\X,\Y](\phi)(\cdot,t)$ are indistinguishable. 
In particular the $\chi^{\ast}$-valued process $\widetilde{[\X,\Y]}$ is weakly star continuous, i.e. 
$\widetilde{[\X,\Y]}(\phi)$ is continuous for every fixed $\phi$.
\item 
The existence of $\widetilde{[\X,\Y]}$ guarantees that 
$[\X,\Y]$ admits a   bounded variation version which allows to consider it as pathwise integrator.
\item The quadratic variation $\widetilde{[\X]}$ will be the object
 intervening in the second order term of the It\^o formula expanding $F(\X)$ for some
$C^2$-Fr\'echet function $F$, see Theorem \ref{thm ITONOM}.
\item In Corollaries \ref{cor SGS} and \ref{corollary} we will show that, whenever $\chi$ is separable (most of the cases), 
the Condition \textbf{H2} can be relaxed in a significant way.  
In fact the Condition \textbf{H2}(i) reduces to the convergence in probability of \eqref{H2 cONDucp} on a dense subspace  and \textbf{H2}(ii) will be automatically 
satisfied. 
 \end{enumerate}
\end{rem}

\begin{rem} \label{rem CSH1}
\begin{enumerate}
\item  
A practical criterion to verify Condition {\bf H1} is 
\begin{equation} \label{eq CSH1}
 \frac{1}{\epsilon}  \int_{0}^{T}\left\| (\X_{s+\epsilon}-\X_{s})
\otimes (\Y_{s+\epsilon}-\Y_{s}) \right\|_{\chi^{\ast}} ds \leq  B(\epsilon)
\end{equation}
where $B(\epsilon)$ converges in probability when $\epsilon$ goes to zero.
 In fact 
the convergence in probability implies the a.s. convergence  of a subsequence. 
\item A consequence of Condition {\bf H1} 
is  that for all $(\epsilon_{n})\downarrow 0$ there exists a subsequence
$(\epsilon_{n_{k}})$ such that
\begin{equation}
\sup_{k}\|\widetilde{[\X,\Y]}^{\epsilon_{n_{k}}}\|_{Var([0,T])}<\infty
\quad a.s. 
\end{equation}
In fact $\|\widetilde{[\X,\Y]}^{\epsilon}\|_{Var([0,T])}\leq\frac{1}{\epsilon}
\int_{0}^{T}\|  (\X_{s+\epsilon}-\X_{s})\otimes   (\Y_{s+\epsilon}-\Y_{s})\|_{\chi^{\ast}}ds  $, which implies that 
$\widetilde{[\X,\Y]}^{\epsilon}$ is a $\chi^{\ast}$-valued process with 
bounded variation on $[0,T]$.
As a consequence, for a 
$\chi$-valued continuous stochastic process $\mathbb{Z}$, $t\in [0,T]$, the integral $\int_0^t
\prescript{}{\chi}{\langle} \mathbb{Z}_{s},d\widetilde{[\X,\Y]}_{s}^{\epsilon_{n_{k}}}\rangle_{\chi^{\ast}}$ is a
well-defined 
Lebesgue-Stieltjes type integral for almost all $\omega\in\Omega$.
\end{enumerate}
\end{rem}
\begin{rem}
\begin{enumerate}
\item To a Borel function $G:\chi\longrightarrow C([0,T])$ we can associate $\tilde{G}:[0,T]\longrightarrow
\chi^{\ast}$ setting $\tilde{G}(t)(\phi)= G(\phi)(t)$. 
By definition $\tilde{G}:[0,T]\longrightarrow \chi^{\ast}$ has bounded variation
if
$\|\tilde{G}\|_{Var([0,T])}:=\sup_{\sigma\in\Sigma_{[0,T]}}
\sum_{i \vert (t_{i})_{i}=\sigma}\left\|\tilde{G}(t_{i+1})-\tilde{G}(t_{i})\right\|_{\chi^{\ast}}=
\sup_{\sigma\in\Sigma_{[0,T]}}
\sum_{i \vert (t_{i})_{i} =\sigma}\sup_{\|\phi\|_{\chi}\leq
1}\left|G(\phi)(t_{i+1})-G(\phi)(t_{i})\right| 
$ 
is finite, where $\Sigma_{[0,T]}$ is the set of all possible partitions $\sigma=(t_{i})_{i}$ of the 
interval $[0,T]$.
This
quantity is the {\bf total variation} of $\tilde{G}$.
For example if $G(\phi)=\int_{0}^{t}\dot{G}_{s}(\phi)\,ds$ with $\dot{G}:\chi\rightarrow C([0,T])$ Bochner integrable, 
then $\|G\|_{Var[0,T]}\leq \int_{0}^{T}\sup_{\|\phi\|_{\chi} \leq
1}|\dot{G}_{s}(\phi)|\,ds$.
\item If $G(\phi)$, $\phi\in \chi$ is a family of stochastic processes, it is not obvious 
to find a good version $\tilde{G}:[0,T]\longrightarrow \chi^{\ast}$ of $G$. This will be the object of Theorem \ref{thm QV}.
\end{enumerate}
\end{rem}
\begin{dfn}		\label{DQVWS}
If the $\chi$-covariation exists with $\chi=(B_{1}\hat{\otimes}_{\pi} B_{2})^{\ast}$, we say that $(\X, \Y)$ admits a \textbf{global covariation}. 
Analogously if $\X$ is $B$-valued and the $\chi$-quadratic variation exists with $\chi=(B\hat{\otimes}_{\pi} B)^{\ast}$, we say that $\X$ admits a \textbf{global quadratic variation}.
\end{dfn}
\begin{rem}   \label{rem456}
\begin{enumerate}
\item $\widetilde{[\X,\Y]}$ takes values ``a priori'' in $(B_{1}\hat{\otimes}_{\pi} B_{2})^{\ast\ast}$. 
\item If $[\X,\Y]^{\R}$ exists then Condition \textbf{H1} follows by Remark \ref{rem CSH1}.1.
\end{enumerate}
\end{rem}

\begin{prop}			\label{pr TGH78}
Let $\X$ (resp. $\Y$) be a $B_{1}$-valued (resp. $B_{2}$-valued) process
 such that 
 $(\X,\Y)$ admits a scalar and tensor covariation. Then $(\X,\Y)$ 
admits a global  covariation.
In particular the global covariation process
 takes values in $B_{1}\hat{\otimes}_{\pi}B_{2}$ and $\widetilde{[\X,\Y]}=[\X,\Y]^{\otimes}$ a.s. 
\end{prop}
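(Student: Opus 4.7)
The plan is to verify conditions \textbf{H1} and \textbf{H2} of Definition \ref{def CHICOV} with $\chi = (B_1 \hat{\otimes}_\pi B_2)^*$, and then identify $\widetilde{[\X,\Y]}$ with $[\X,\Y]^\otimes$ via the canonical injection $J$. The crucial elementary fact I will use throughout is that on elementary tensors the projective norm is multiplicative, i.e.\ $\pi(a \otimes b) = \|a\|_{B_1}\|b\|_{B_2}$, so that $J$ being an isometry gives
\[
\frac{\|(\X_{s+\epsilon}-\X_s)\otimes(\Y_{s+\epsilon}-\Y_s)\|_{(B_1\hat\otimes_\pi B_2)^{**}}}{\epsilon}
= \frac{\|\X_{s+\epsilon}-\X_s\|_{B_1}\|\Y_{s+\epsilon}-\Y_s\|_{B_2}}{\epsilon}.
\]

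First, I will check \textbf{H1}. Integrating the above identity over $[0,T]$ shows that the quantity appearing on the left-hand side of \eqref{eq CSH1} coincides exactly with $[\X,\Y]^{\R,\epsilon}_T$. Since $\X,\Y$ admit a scalar covariation by hypothesis, $[\X,\Y]^{\R,\epsilon}_T$ converges in probability as $\epsilon \downarrow 0$ to $[\X,\Y]^{\R}_T$, so Remark \ref{rem CSH1}.1 (the practical criterion) yields \textbf{H1}.

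Next, I will check \textbf{H2(i)}. By the defining property of the Bochner integral, for every $\phi \in (B_1\hat{\otimes}_\pi B_2)^*$ and every $t \in [0,T]$,
\[
\langle \phi, [\X,\Y]^{\otimes,\epsilon}_t \rangle = \int_0^t \Big\langle \phi, \frac{(\X_{s+\epsilon}-\X_s)\otimes(\Y_{s+\epsilon}-\Y_s)}{\epsilon} \Big\rangle ds = [\X,\Y]^{\epsilon}(\phi)_t.
\]
Since $[\X,\Y]^{\otimes,\epsilon} \to [\X,\Y]^{\otimes}$ ucp in the strong topology of $B_1\hat\otimes_\pi B_2$ by assumption, and $\phi$ is linear and continuous, the real-valued process $\phi([\X,\Y]^{\otimes,\epsilon})$ converges ucp to $\phi([\X,\Y]^{\otimes})$. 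Therefore \textbf{H2(i)} holds with $[\X,\Y](\phi) := \phi([\X,\Y]^{\otimes})$, which is continuous in $t$ because $[\X,\Y]^{\otimes}$ is.

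Finally, I will set $\widetilde{[\X,\Y]}_t := J([\X,\Y]^{\otimes}_t)$, regarded as a process with values in $(B_1\hat\otimes_\pi B_2)^{**} \subset \chi^*$. Then $\widetilde{[\X,\Y]}_t(\phi) = \phi([\X,\Y]^{\otimes}_t) = [\X,\Y](\phi)_t$, so the compatibility with \textbf{H2(i)} is immediate. Measurability of $\widetilde{[\X,\Y]}$ follows from the continuity of $t \mapsto [\X,\Y]^{\otimes}_t$ together with the fact that $J$ is an isometric linear embedding. The bounded variation property of $\widetilde{[\X,\Y]}$ is inherited from that of $[\X,\Y]^{\otimes}$, which is precisely the content of Remark \ref{rem RE1}.2: the scalar covariation dominates the total variation of $[\X,\Y]^{\otimes}$ in the strong norm of $B_1\hat\otimes_\pi B_2$, hence (since $J$ is isometric) also in $(B_1\hat\otimes_\pi B_2)^{**}$. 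This completes the verification of \textbf{H2(ii)} and yields $\widetilde{[\X,\Y]} = [\X,\Y]^{\otimes}$, the global covariation taking values in $J(B_1\hat\otimes_\pi B_2)$. There is no serious obstacle here: the proof is essentially a bookkeeping exercise once the isometry $\pi(a\otimes b) = \|a\|_{B_1}\|b\|_{B_2}$ is invoked, the rest being standard continuity and linearity arguments.
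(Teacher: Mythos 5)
Your proof is correct and follows essentially the same route as the paper's: Condition \textbf{H1} via the identification of the $\chi^{\ast}$-norm of the elementary tensor with the product of norms (so that the bound is exactly $[\X,\Y]^{\R,\epsilon}_T$, handled by Remark \ref{rem CSH1}.1), Condition \textbf{H2}(i) via the compatibility of the Bochner integral with the duality pairing and the strong ucp convergence of $[\X,\Y]^{\otimes,\epsilon}$, and Condition \textbf{H2}(ii) via Remark \ref{rem RE1}.2. The only cosmetic difference is that you make explicit the cross-norm property $\pi(a\otimes b)=\|a\|_{B_1}\|b\|_{B_2}$, which the paper leaves implicit in Remark \ref{rem456}.2.
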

\begin{proof}
We set $\chi=(B_{1}\hat{\otimes}_{\pi} B_{2})^{\ast}$. 
Taking into account Remark \ref{rem456}.2, it will be enough to verify Condition \textbf{H2}.
Recalling the definition of $[\X,\Y]^{\epsilon}$ at \eqref{eq Xepsilon} and the definition of injection $J$ we observe that 
\begin{equation}		\label{eq SDD}
\begin{split}
[\X,\Y]^{\epsilon}(\phi)(\cdot, t) &= 
\int_{0}^{t} \prescript{}{(B_{1}\hat{\otimes}_{\pi}B_{2})^{\ast} }{\langle} \phi,
\frac{ \left(\X_{s+\epsilon}-\X_{s}\right)\otimes  \left(\Y_{s+\epsilon}-\Y_{s}\right)}{\epsilon} 
\rangle_{B_{1}\hat{\otimes}_{\pi}B_{2}} \,ds  \; .
\end{split}
\end{equation}
Since Bochner integrability implies Pettis integrability, for every $\phi\in (B_{1}\hat{\otimes}_{\pi}B_{2})^{\ast}$, we also have 
\begin{equation}		\label{eq SDD1}
 \prescript{}{(B_{1}\hat{\otimes}_{\pi}B_{2})^{\ast} }{\langle} \phi,  [\X,\Y]^{\otimes, \epsilon}_{t}\rangle_{B_{1}\hat{\otimes}_{\pi}B_{2}} = 
 \int_{0}^{t} \prescript{}{(B_{1}\hat{\otimes}_{\pi}B_{2})^{\ast} }{\langle} \phi,   \frac{ \left(\X_{s+\epsilon}-\X_{s}\right)\otimes  \left(\Y_{s+\epsilon}-\Y_{s}\right)}{\epsilon} 
\rangle_{B_{1}\hat{\otimes}_{\pi}B_{2}} \,ds   \; .
\end{equation}
\eqref{eq SDD} and \eqref{eq SDD1} imply that 
\begin{equation}		\label{eq SDD4}
[\X,\Y]^{\epsilon}(\phi)(\cdot, t)= \prescript{}{(B_{1}\hat{\otimes}_{\pi}B_{2})^{\ast} }{\langle} \phi,  [\X,\Y]^{\otimes, \epsilon}_{t}\rangle_{B_{1}\hat{\otimes}_{\pi}B_{2}} \hspace{1cm} \textrm{a.s.}
\end{equation}
Concerning the validity of Condition  \textbf{H2} we will show that
\begin{equation}		\label{eq SDD2}
\sup_{t\leq T} \left| [\X,\Y]^{\epsilon}(\phi)(\cdot, t) -   \prescript{}{(B_{1}\hat{\otimes}_{\pi}B_{2})^{\ast} }{\langle} \phi,  [\X,\Y]^{\otimes}_{t}\rangle_{B_{1}\hat{\otimes}_{\pi}B_{2}} \right| \xrightarrow[\epsilon\longrightarrow 0]{\mathbb{P}} 0  \; .
\end{equation}
By \eqref{eq SDD4} the left-hand side of \eqref{eq SDD2} 
 gives
\[
\sup_{t\leq T} \left| \prescript{}{(B_{1}\hat{\otimes}_{\pi}B_{2})^{\ast} }{\langle} \phi,  [\X,\Y]^{\otimes, \epsilon}_{t} - [\X,\Y]^{\otimes}_{t} \rangle_{B_{1}\hat{\otimes}_{\pi}B_{2}} \right| 
\leq \left\| \phi\right\|_{(B_{1}\hat{\otimes}_{\pi}B_{2})^{\ast}}  \; \sup_{t\leq T} \left\|[\X,\Y]^{\otimes, \epsilon}_{t} - [\X,\Y]^{\otimes}_{t}  
\right\|_{B_{1}\hat{\otimes}_{\pi}B_{2}},
\] 
where the last quantity converges to zero in probability by Definition \ref{dfn REALTENSOQV} item 2 of the tensor quadratic variation; this implies \eqref{eq SDD2}.
The tensor quadratic variation has always bounded variation because of item 2 of Remark \ref{rem RE1}. 
In conclusion \textbf{H2}(ii) is also verified.
\end{proof}
\begin{rem}{We observe some interesting features related to the global covariation, i.e. the $\chi$-covariation 
 when $\chi=(B_{1}\hat{\otimes}_{\pi} B_{2})^{\ast}$.}  
\begin{enumerate}
\item When $\chi$ is separable, for  any $t \in [0,T]$, 
there exists a null subset $N$ of $\Omega$ and a sequence $(\epsilon_n)$ 
such that 
$
\widetilde{[\X,\Y]}^{\epsilon_n}(\omega,t)
\xrightarrow [\epsilon\longrightarrow 0]{}\widetilde{[\X,\Y]}(\omega,t)  
$  
weak star for $\omega \notin N$, see Lemma \ref{lem IMPR}.
This confirms the relation between the global covariation and the 
 weak star convergence in the space $(B_{1}\hat{\otimes}_{\pi} B_{2})^{\ast\ast}$ 
as anticipated in the Introduction.
\item We recall that $J(B_{1}\hat{\otimes}_{\pi} B_{2})$ is isometrically embedded (and weak star dense) in 
$(B_{1}\hat{\otimes}_{\pi} B_{2})^{\ast\ast}$. In particular it is the case if $B_{1}$ or $B_{2}$ has infinite dimension. 
If the Banach space $B_{1}\hat{\otimes}_{\pi} B_{2}$ is not reflexive, then 
 $(B_{1}\hat{\otimes}_{\pi} B_{2})^{\ast\ast}$ strictly contains
 $B_{1}\hat{\otimes}_{\pi} B_{2}$.
The weak star convergence is weaker then the strong convergence in $J(B_{1}\hat{\otimes}_{\pi} B_{2}) $, 
required in the definition of the tensor quadratic variation, see Definition \ref{dfn REALTENSOQV} item 2. 
The global covariation is therefore truly more general than the tensor covariation.
\item In general $B_{1}\hat{\otimes}_{\pi} B_{2}$ is not reflexive even if $B_{1}$ and $B_{2}$ are Hilbert spaces, see for instance \cite{rr} at Section 4.2. 
\end{enumerate}
\end{rem}
We go on with some related results about the $\chi$-covariation and the $\chi$-quadratic variation.
\begin{prop} 			\label{prop somma diretta di qv}
Let $\X$ (resp. $\Y$) be a $B_{1}$-valued (resp. $B_{2}$-valued) process and
 $\chi_{1}$, $\chi_{2}$ be
two Chi-subspaces of $(B_{1}\hat{\otimes}_{\pi} B_{2})^{\ast}$ with $\chi_{1}\cap\chi_{2}=\{0\}$.
Let $\chi=\chi_{1}\oplus \chi_{2}$. If $(\X,\Y)$ admit a $\chi_{i}$-covariation $[\X,\Y]_{i}$ for $i=1,2$ then 
they admit a $\chi$-covariation $[\X,\Y]$ and it holds
 $[\X,\Y](\phi)=[\X,\Y]_{1}(\phi_{1})+[\X,\Y]_{2}(\phi_{2})$ for all $\phi\in \chi$ with 
unique decomposition $\phi=\phi_{1}+\phi_{2}$, $\phi_{1}\in \chi_{1}$ and $\phi_{2}\in \chi_{2}$.
\end{prop}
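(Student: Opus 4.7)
The plan is to verify conditions \textbf{H1} and \textbf{H2} of Definition \ref{def CHICOV} for $\chi = \chi_{1}\oplus\chi_{2}$, leveraging the same conditions assumed for $\chi_{1}$ and $\chi_{2}$. I would first equip $\chi$ with the direct sum norm $\|\phi\|_{\chi} = \|\phi_{1}\|_{\chi_{1}} + \|\phi_{2}\|_{\chi_{2}}$ associated with the unique decomposition $\phi = \phi_{1}+\phi_{2}$ coming from $\chi_{1}\cap\chi_{2}=\{0\}$, which is a Chi-subspace by Remark \ref{pro 3.2}.3. Under this norm, every $T\in\chi^{\ast}$ restricts to $T|_{\chi_{i}}\in\chi_{i}^{\ast}$ and
\[
\|T\|_{\chi^{\ast}} \;=\; \max\bigl(\|T|_{\chi_{1}}\|_{\chi_{1}^{\ast}},\, \|T|_{\chi_{2}}\|_{\chi_{2}^{\ast}}\bigr) \;\le\; \|T|_{\chi_{1}}\|_{\chi_{1}^{\ast}} + \|T|_{\chi_{2}}\|_{\chi_{2}^{\ast}}.
\]
Applied pointwise in $s$ to the tensor element $\xi_{s}^{\epsilon} := (\X_{s+\epsilon}-\X_{s})\otimes(\Y_{s+\epsilon}-\Y_{s})$, viewed in $\chi^{\ast}$ via the canonical injection, this gives $\|\xi_{s}^{\epsilon}\|_{\chi^{\ast}} \le \|\xi_{s}^{\epsilon}\|_{\chi_{1}^{\ast}} + \|\xi_{s}^{\epsilon}\|_{\chi_{2}^{\ast}}$.

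For \textbf{H1}, given any sequence $(\epsilon_{n})\downarrow 0$, I would apply \textbf{H1} for $\chi_{1}$ to extract a subsequence along which $\sup_{k}\int_{0}^{T}\|\xi_{s}^{\epsilon_{n_{k}}}\|_{\chi_{1}^{\ast}}/\epsilon_{n_{k}}\,ds < \infty$ almost surely, then a further subsequence by \textbf{H1} for $\chi_{2}$ so that the analogous quantity for $\chi_{2}^{\ast}$ is also a.s. bounded. The displayed inequality then yields \textbf{H1} for $\chi$ along this diagonal subsequence.

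For \textbf{H2}(i), I would exploit linearity in $\phi$ of the integrand in \eqref{eq Xepsilon}: for every $\phi=\phi_{1}+\phi_{2}\in\chi$,
\[
[\X,\Y]^{\epsilon}(\phi) \;=\; [\X,\Y]^{\epsilon}(\phi_{1}) + [\X,\Y]^{\epsilon}(\phi_{2}),
\]
where each summand converges ucp by assumption on $\chi_{i}$. Setting $[\X,\Y](\phi) := [\X,\Y]_{1}(\phi_{1}) + [\X,\Y]_{2}(\phi_{2})$ gives the ucp limit and simultaneously the announced decomposition formula. For \textbf{H2}(ii), I would define the candidate $\widetilde{[\X,\Y]}_{t}(\phi) := \widetilde{[\X,\Y]}_{1,t}(\phi_{1}) + \widetilde{[\X,\Y]}_{2,t}(\phi_{2})$; the pathwise total variation in $\chi^{\ast}$ is dominated by the sum of the total variations in $\chi_{i}^{\ast}$, hence finite a.s., and for fixed $\phi$ it coincides with $[\X,\Y](\phi)$.

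The main obstacle is to verify that this candidate is genuinely a measurable $\chi^{\ast}$-valued process. Consistency with $\chi^{\ast}$ follows from the observation that $(T_{1},T_{2})\mapsto\bigl(\phi_{1}+\phi_{2}\mapsto T_{1}(\phi_{1})+T_{2}(\phi_{2})\bigr)$ realises a continuous linear bijection $\chi_{1}^{\ast}\oplus\chi_{2}^{\ast}\to\chi^{\ast}$, relying only on the topological direct sum structure and the closed graph theorem to establish bicontinuity. Measurability of $\widetilde{[\X,\Y]}:\Omega\times[0,T]\to\chi^{\ast}$ then transfers from the measurability of each $\widetilde{[\X,\Y]}_{i}$ into $\chi_{i}^{\ast}$ through this isomorphism, completing the verification of Definition \ref{def CHICOV}.
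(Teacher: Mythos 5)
Your proposal is correct and follows essentially the same route as the paper: equip $\chi=\chi_1\oplus\chi_2$ with the sum norm, get \textbf{H1} from $\|\xi^{\epsilon}_s\|_{\chi^{\ast}}\le\|\xi^{\epsilon}_s\|_{\chi_1^{\ast}}+\|\xi^{\epsilon}_s\|_{\chi_2^{\ast}}$ together with a diagonal subsequence extraction, \textbf{H2}(i) by linearity of $\phi\mapsto[\X,\Y]^{\epsilon}(\phi)$, and \textbf{H2}(ii) by summing the two $\chi_i^{\ast}$-valued versions. Your extra care on the isomorphism $\chi_1^{\ast}\oplus\chi_2^{\ast}\cong\chi^{\ast}$ and measurability only makes explicit what the paper leaves implicit.
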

\begin{proof}  \
$\chi$ is a Chi-subspace because of item 3 of Remark \ref{pro 3.2}.
It will be enough to show the result for a fixed norm in the space $\chi$. We set $\|\phi\|_{\chi}=\|\phi_{1}\|_{\chi_{1}}+\|\phi_{2}\|_{\chi_{2}}$ and 
we remark that $\|\phi\|_{\chi}  \geq \|\phi_{i}\|_{\chi_{i}}$, $i=1,2$. Condition 
\textbf{H1} follows immediately by inequality
\[
\begin{split}
\int_{0}^{T} \sup_{\|\phi\|_{\chi}\leq 1 } \left| \prescript{}{\chi}{\langle} \phi, (\X_{s+\epsilon}-\X_{s})\otimes  (\Y_{s+\epsilon}-\Y_{s}) \rangle_{\chi^{\ast}} \right| ds & \leq
\int_0^{T}\sup_{\|\phi_{1}\| _{\chi_{1}}\leq 1}  \left| \prescript{}{\chi_{1}}{\langle} \phi_{1}, (\X_{s+\epsilon}-\X_{s})\otimes  (\Y_{s+\epsilon}-\Y_{s}) \rangle_{\chi_{1}^{\ast}} \right| ds +\\
&+
\int_0^{T}\sup_{\|\phi_{2}\| _{\chi_{2}}\leq 1}  \left| \prescript{}{\chi_{2}}{\langle }\phi_{2}, (\X_{s+\epsilon}-\X_{s})\otimes  (\Y_{s+\epsilon}-\Y_{s})\rangle_{\chi_{2}^{\ast}}  \right| ds \; .
\end{split}
\]
Condition \textbf{H2}(i) follows by linearity; in fact
\begin{displaymath}
\begin{split}
[\X,\Y]^{\epsilon}(\phi)& =\int_{0}^{t}\prescript{}{\chi}{\langle}  \phi_{1}+\phi_{2}, (\X_{s+\epsilon}-\X_{s})\otimes  (\Y_{s+\epsilon}-\Y_{s})  \rangle_{\chi^{\ast}} ds=\\
&=
\int_{0}^{t}\prescript{}{\chi_{1}}{\langle} \phi_{1}, (\X_{s+\epsilon}-\X_{s})\otimes  (\Y_{s+\epsilon}-\Y_{s})  \rangle_{\chi_{1}^{\ast}} ds
+\int_{0}^{t}\prescript{}{\chi_{2}}{\langle}  \phi_{2}, (\X_{s+\epsilon}-\X_{s})\otimes (\Y_{s+\epsilon}-\Y_{s}) \rangle_{\chi_{2}^{\ast}} ds\\
&
\xrightarrow [\epsilon\rightarrow 0]{ucp}   [\X,\Y]_{1}(\phi_{1})+[\X,\Y]_{2}(\phi_{2}) \; .
\end{split}
\end{displaymath}
Concerning Condition {\bf H2}(ii), for $\omega\in \Omega$, $t\in [0,T]$ we can obviously set 
$\widetilde{[\X,\Y]}(\omega,t)(\phi)=\widetilde{[\X,\Y]_{1}}(\omega,t)(\phi_{1})+\widetilde{[\X,\Y]_{2}}(\omega,t)(\phi_{2})$.
\end{proof}
\begin{prop}		\label{prop XQVSM}
Let $\X$ (resp. $\Y$) be a $B_{1}$-valued (resp. $B_{2}$-valued) stochastic process.
\begin{enumerate}
\item
Let $\chi_{1}$ and $\chi_{2}$ be 
two subspaces $\chi_{1}\subset \chi_{2}\subset (B_{1}\hat{\otimes}_{\pi}B_{2})^{\ast}$, $\chi_{1}$ being 
a Banach subspace continuously embedded into $\chi_{2}$ and $\chi_{2}$ a Chi-subspace.
If $(\X,\Y)$ admit a $\chi_{2}$-covariation $[\X,\Y]_{2}$, 
then they also admit a $\chi_{1}$-covariation $[\X,\Y]_{1}$ and it holds 
$[\X,\Y]_{1}(\phi)=[\X,\Y]_{2}(\phi)$ for all $\phi\in \chi_{1}$.
\item In particular if $(\X,\Y)$ admit a tensor quadratic variation, then $\X$ and $\Y$ admit a $\chi$-quadratic variation for any Chi-subspace $\chi$.
\end{enumerate}
\end{prop}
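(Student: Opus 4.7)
The plan is to prove item 1 by transferring the $\chi_2$-covariation to the smaller space $\chi_1$ through the continuous inclusion, and to deduce item 2 from item 1 applied with $\chi_2=(B\hat{\otimes}_{\pi}B)^{*}$ in conjunction with Proposition \ref{pr TGH78}.

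For item 1, the continuous embedding provides a constant $C>0$ such that $\|\phi\|_{\chi_{2}}\le C\|\phi\|_{\chi_{1}}$ for every $\phi\in\chi_{1}$. Dually, the restriction map $r:\chi_{2}^{*}\to\chi_{1}^{*}$, $u\mapsto u|_{\chi_{1}}$, is linear and bounded with operator norm at most $C$, and in particular $\|v\|_{\chi_{1}^{*}}\le C\|v\|_{\chi_{2}^{*}}$ for every $v\in\chi_{2}^{*}$. First I would verify Condition \textbf{H1} for $\chi_{1}$ by picking the subsequence $\epsilon_{n_{k}}$ furnished by \textbf{H1} for $\chi_{2}$ and estimating
\[
\sup_{k}\int_{0}^{T}\frac{\|(\X_{s+\epsilon_{n_{k}}}-\X_{s})\otimes(\Y_{s+\epsilon_{n_{k}}}-\Y_{s})\|_{\chi_{1}^{*}}}{\epsilon_{n_{k}}}\,ds\le C\sup_{k}\int_{0}^{T}\frac{\|(\X_{s+\epsilon_{n_{k}}}-\X_{s})\otimes(\Y_{s+\epsilon_{n_{k}}}-\Y_{s})\|_{\chi_{2}^{*}}}{\epsilon_{n_{k}}}\,ds<\infty
\]
almost surely. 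For \textbf{H2}(i), every $\phi\in\chi_{1}$ also belongs to $\chi_{2}$, so the approximating processes satisfy $[\X,\Y]_{1}^{\epsilon}(\phi)=[\X,\Y]_{2}^{\epsilon}(\phi)$, and I simply define $[\X,\Y]_{1}(\phi):=[\X,\Y]_{2}(\phi)$, which converges ucp by assumption. For \textbf{H2}(ii), I set $\widetilde{[\X,\Y]}_{1}(\omega,t):=r\bigl(\widetilde{[\X,\Y]}_{2}(\omega,t)\bigr)$; being the composition of a $\chi_{2}^{*}$-valued c\`adl\`ag bounded variation process with the bounded linear map $r$, it is measurable, c\`adl\`ag and has bounded variation in $\chi_{1}^{*}$, with total variation bounded by $C$ times that of $\widetilde{[\X,\Y]}_{2}$. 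The identity $\widetilde{[\X,\Y]}_{1}(\omega,t)(\phi)=[\X,\Y]_{1}(\phi)(\omega,t)$ for $\phi\in\chi_{1}$ is immediate.

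Item 2 then reduces to producing a global quadratic variation from the tensor one: apply item 1 with $\chi_{1}:=\chi$ arbitrary and $\chi_{2}:=(B\hat{\otimes}_{\pi}B)^{*}$. The passage from tensor to global quadratic variation is exactly Proposition \ref{pr TGH78} applied with $\Y=\X$, so no extra argument is needed here.

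The routine part is item 1, whose only content is the compatibility of dual norms under a continuous inclusion; the mild bookkeeping step is recognizing $r$ as the adjoint of the inclusion $\chi_{1}\hookrightarrow\chi_{2}$ and checking that it pushes the bounded variation process $\widetilde{[\X,\Y]}_{2}$ forward to a well-defined object in $\chi_{1}^{*}$. No deeper analytic obstacle arises.
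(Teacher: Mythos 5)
Your proof is correct and follows essentially the same route as the paper's: Condition \textbf{H1} is transferred via the dual-norm comparison $\|\cdot\|_{\chi_{1}^{*}}\le C\|\cdot\|_{\chi_{2}^{*}}$ (the paper normalizes the embedding constant to $1$ and phrases this as an inclusion of unit balls), \textbf{H2} is obtained by restriction of $[\X,\Y]_{2}$ and $\widetilde{[\X,\Y]}_{2}$ to $\chi_{1}$ with the same total-variation bound, and item 2 is deduced, exactly as in the paper, from item 1 together with Proposition \ref{pr TGH78}.
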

\begin{proof}
\begin{enumerate}
\item
If Condition \textbf{H1} is valid for $\chi_{2}$ then it is also verified  for $\chi_{1}$. 
In fact we remark that $(\X_{s+\epsilon}-\X_{s})\otimes (\Y_{s+\epsilon}-\Y_{s})$ is an element in $(B_{1}\hat{\otimes}_{\pi} B_{2})\subset 
(B_{1}\hat{\otimes}_{\pi} B_{2})^{\ast\ast}\subset \chi_{2}^{\ast}\subset\chi_{1}^{\ast}$. 
If $A:=\left\{  \phi  \in \chi_{1} \; ; \|\phi\|_{\chi_{1}\leq 1}  \right\}$ and $B:=\left\{  \phi  \in \chi_{2} \; ; \|\phi\|_{\chi_{2} \leq 1}  \right\}$, then $A\subset  B$ and clearly 
$\int_{0}^{t}\sup_{\phi\in A}|\langle\phi, (\X_{s+\epsilon}-\X_{s})\otimes (\Y_{s+\epsilon}-\Y_{s})\rangle |ds 
\leq \int_{0}^{t} \sup_{\phi\in B}|\langle\phi, (\X_{s+\epsilon}-\X_{s})\otimes (\Y_{s+\epsilon}-\Y_{s}) \rangle |ds$. This implies the inequality 
$\left\| (\X_{s+\epsilon}-\X_{s})\otimes (\Y_{s+\epsilon}-\Y_{s}) \right\|_{\chi_{1}^{\ast}}\leq \left\| (\X_{s+\epsilon}-\X_{s})\otimes (\Y_{s+\epsilon}-\Y_{s})\right\|_{\chi_{2}^{\ast}}$ and 
Assumption \textbf{H1} follows immediately. Assumption {\bf H2}(i) is trivially verified because, by restriction, we have 
$[\X,\Y]^{\epsilon}(\phi)\xrightarrow[\epsilon \rightarrow 0] {ucp} [\X,\Y]_{2}(\phi)$ for all $\phi\in \chi_{1}$. 
We define $[\X,\Y]_{1}(\phi)=[\X,\Y]_{2}(\phi)$, $\forall \; \phi\in \chi_{1}$ and $\widetilde{[\X,\Y]_{1}}(\omega, t)(\phi)=\widetilde{[\X,\Y]_{2}}(\omega, t)(\phi)$, for all 
$\omega\in \Omega$, $t\in [0,T]$, $\phi\in \chi_{1}$. Condition \textbf{H2}(ii) follows because given 
$G:[0,T]\longrightarrow \chi_{1}$ we have $\|G(t)-G(s)\|_{\chi_{1}^{\ast}}\leq \|G(t)-G(s)\|_{\chi_{2}^{\ast}}$, $\forall\; 0\leq s \leq t\leq T$.
\item It follows from 1. and Proposition \ref{pr TGH78}.
\end{enumerate} 
\end{proof}

%
%
We continue with some general properties of the $\chi$-covariation.
\begin{lem}   \label{lem CONV ZERO PROB}
Let $\X$ (resp. $\Y$) be a $B_{1}$-valued (resp. $B_{2}$-valued) 
stochastic process and $\chi$ be a Chi-subspace. 
Suppose that $\frac{1}{\epsilon}\int_{0}^{T}\| (\X_{s+\epsilon}-\X_{s})\otimes (\Y_{s+\epsilon}-\Y_{s}) \|_{\chi^{\ast}}  \,ds$
converges to $0$ in probability when $\epsilon$ goes to zero. 
\begin{enumerate}
\item 
Then $(\X,\Y)$ admits a zero $\chi$-covariation.
\item 
If $\chi=(B_{1}\hat{\otimes}_{\pi}B_{2})^{\ast}$, then $(\X,\Y)$ admits
a zero scalar and tensor covariation.
\end{enumerate}
\end{lem}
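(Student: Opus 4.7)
For item 1, I would verify the three conditions of Definition \ref{def CHICOV} with the candidate $\chi$-covariation being identically zero. Condition \textbf{H1} is immediate: the assumed convergence in probability of the scalar quantity $\tfrac{1}{\epsilon}\int_{0}^{T}\|(\X_{s+\epsilon}-\X_{s})\otimes(\Y_{s+\epsilon}-\Y_{s})\|_{\chi^{\ast}}ds$ to $0$ yields, from every subsequence, a further subsequence converging to $0$ a.s., in particular almost surely bounded. For Condition \textbf{H2}(i), observe that for every $\phi\in\chi$
\[
\sup_{t\in[0,T]}\bigl|[\X,\Y]^{\epsilon}(\phi)(\cdot,t)\bigr|\;\leq\;\|\phi\|_{\chi}\cdot\frac{1}{\epsilon}\int_{0}^{T}\|(\X_{s+\epsilon}-\X_{s})\otimes(\Y_{s+\epsilon}-\Y_{s})\|_{\chi^{\ast}}\,ds,
\]
which by hypothesis tends to $0$ in probability; this gives the ucp convergence of $[\X,\Y]^{\epsilon}(\phi)$ to the zero process. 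Condition \textbf{H2}(ii) is then satisfied by taking $\widetilde{[\X,\Y]}\equiv 0$ as $\chi^{\ast}$-valued process: it is measurable, has zero (hence bounded) variation, and is obviously compatible with $[\X,\Y](\phi)\equiv 0$.

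For item 2, the key observation is that with $\chi=(B_{1}\hat{\otimes}_{\pi}B_{2})^{\ast}$ the canonical injection $J:B_{1}\hat{\otimes}_{\pi}B_{2}\to(B_{1}\hat{\otimes}_{\pi}B_{2})^{\ast\ast}=\chi^{\ast}$ is an isometry, and the projective norm of an elementary tensor equals the product of the norms of its factors, i.e.\ $\pi(a\otimes b)=\|a\|_{B_{1}}\|b\|_{B_{2}}$. Hence
\[
\bigl\|(\X_{s+\epsilon}-\X_{s})\otimes(\Y_{s+\epsilon}-\Y_{s})\bigr\|_{\chi^{\ast}}\;=\;\|\X_{s+\epsilon}-\X_{s}\|_{B_{1}}\,\|\Y_{s+\epsilon}-\Y_{s}\|_{B_{2}},
\]
so that the hypothesis rewrites exactly as $[\X,\Y]^{\mathbb{R},\epsilon}_{T}\to 0$ in probability. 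Since $t\mapsto [\X,\Y]^{\mathbb{R},\epsilon}_{t}$ is increasing, the supremum over $t\in[0,T]$ is attained at $T$, giving ucp convergence of $[\X,\Y]^{\mathbb{R},\epsilon}$ to $0$; this is precisely the vanishing of the scalar covariation.

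Finally, the tensor covariation vanishes by the standard Bochner estimate
\[
\sup_{t\in[0,T]}\bigl\|[\X,\Y]^{\otimes,\epsilon}_{t}\bigr\|_{B_{1}\hat{\otimes}_{\pi}B_{2}}\;\leq\;\frac{1}{\epsilon}\int_{0}^{T}\pi\bigl((\X_{s+\epsilon}-\X_{s})\otimes(\Y_{s+\epsilon}-\Y_{s})\bigr)\,ds\;=\;[\X,\Y]^{\mathbb{R},\epsilon}_{T},
\]
which tends to $0$ in probability, so the tensor covariation in the sense of Definition \ref{dfn REALTENSOQV} exists and equals zero. No serious obstacle is expected; the only delicate point is the isometric identification of the projective norm of an elementary tensor inside the bidual, after which both statements reduce to routine dominated-estimate arguments.
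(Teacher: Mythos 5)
Your proposal is correct and follows essentially the same route as the paper: Condition \textbf{H1} via a.s.\ convergence of a subsequence, \textbf{H2}(i) via the same domination $\sup_{t}|[\X,\Y]^{\epsilon}(\phi)(t)|\leq\|\phi\|_{\chi}\cdot\frac{1}{\epsilon}\int_{0}^{T}\|(\X_{s+\epsilon}-\X_{s})\otimes(\Y_{s+\epsilon}-\Y_{s})\|_{\chi^{\ast}}ds$, \textbf{H2}(ii) trivially, and for item 2 the identification of the $\chi^{\ast}$-norm of an elementary tensor with the product of the factor norms followed by the vanishing of the tensor covariation (which the paper simply cites as Remark \ref{rem RE1} item 4, and which you reprove via the Bochner estimate). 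The extra details you supply (isometry of $J$, cross-norm property of $\pi$) are exactly what the paper leaves implicit.
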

\begin{proof} \
Concerning item 1 Condition \textbf{H1} is verified because of Remark 
\ref{rem CSH1} item 1. We verify {\bf H2}(i) directly. For every fixed $\phi\in \chi$ we have 
\[
\begin{split}
\left| [\X,\Y]^{\epsilon}(\phi)(t)\right| &
= \left| \int_{0}^{t} \prescript{}{\chi}{\langle} \phi,\frac{(\X_{s+\epsilon}-\X_{s})\otimes (\Y_{s+\epsilon}-\Y_{s})}{\epsilon} \rangle_{\chi^{\ast}}  \, ds\right|  \leq 
   \int_{0}^{T}\left| \prescript{}{\chi}{\langle} \phi,\frac{(\X_{s+\epsilon}-\X_{s})\otimes(\Y_{s+\epsilon}-\Y_{s})}{\epsilon} \rangle_{\chi^{\ast}} \right| \, ds.
\end{split}
\]
So we obtain
\[
\begin{split} 
\sup_{t\in [0,T]}\left| [\X,\Y]^{\epsilon}(\phi)(t)\right| 
&
\leq \|\phi\|_{\chi} \frac{1}{\epsilon} \int_{0}^{T}\| (\X_{s+\epsilon}-\X_{s})\otimes (\Y_{s+\epsilon}-\Y_{s}) \|_{\chi^{\ast}} \, ds \xrightarrow [\epsilon\longrightarrow 0]{}0
\end{split}
\] 
in probability by the hypothesis. Since condition  {\bf H2}(ii) holds  trivially, 
we can conclude for the first result. 
Concerning item 2. the scalar covariation vanishes by hypothesis, which
 also forces the tensor covariation to be zero, see Remark \ref{rem RE1}, item 4.
\end{proof}
\subsection{Technical issues}			\label{sec teRes}
\subsubsection{Convergence of infinite dimensional Stieltjes integrals}
We state now an important technical result which will be used in the proof of the It\^o formula appearing in Theorem \ref{thm ITONOM}. 

\begin{prop}		\label{pr IMPR} 
Let $\chi$ be a separable Banach space, a sequence
 $F^{n}:\chi\longrightarrow \mathscr{C}([0,T])$
 of
linear continuous maps and  measurable random fields
$\widetilde{F}^{n}:\Omega\times [0,T]\longrightarrow \chi^{\ast}
$  such that $\widetilde{F}^{n}(\cdot,t)(\phi)=F^{n}(\phi)(\cdot,t)$ a.s. $\forall\; t\in [0,T]$, $\phi\in \chi$. 
We suppose the following.
\begin{description}
\item [i)] For every $n$, $t\mapsto \widetilde{F}^n(\cdot,t)$ is a. s.
 of bounded variation  and
for all $(n_{k})$ there is a subsequence $(n_{k_{j}})$ such that
 $\sup_{j}\|\widetilde{F}^{n_{k_{j}}}\|_{Var([0,T])}<\infty$ a.s.
\item [ii)] There is a linear continuous map $F:\chi\longrightarrow
\mathscr{C}([0,T])$ such that for all $t\in[0,T]$ and for every
$\phi\in\chi$ $F^{n}(\phi)(\cdot,t)\longrightarrow
F(\phi)(\cdot,t)$ in probability.
\item [iii)] There is measurable random field $\widetilde{F}:\Omega \times [0,T]\longrightarrow
\chi^{\ast}$ of such that for $\omega$ a.s.  
$\widetilde{F}(\omega, \cdot): [0,T]\longrightarrow
\chi^{\ast}$ has
bounded variation and
$\widetilde{F}(\cdot,t)(\phi)=F(\phi)(\cdot,t)$a.s. $\forall
 \, t\in [0,T]$ and $\phi\in \chi$.
\item [iv)] $F^{n}(\phi)(0)=0$ for every $\phi\in \chi$.
\end{description}
Then for every $t\in [0,T]$ and every continuous process $H:\Omega \times [0,T]  \longrightarrow \chi$
\begin{equation}
\int_{0}^{t} \prescript{}{\chi}{\langle} H(\cdot, s),d\widetilde{F}^{n}(\cdot,s) \rangle_{\chi^{\ast}} \longrightarrow
\int_{0}^{t} \prescript{}{\chi}{\langle} H(\cdot, s),d\widetilde{F}(\cdot,s) \rangle_{\chi^{\ast}} \quad \quad \textrm{ in probability.}
\end{equation}
\end{prop}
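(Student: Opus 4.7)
The plan is to argue along subsequences and pathwise, via a Riemann-sum approximation of the Stieltjes integrals, controlled by the uniform total variation bound from (i), the separability of $\chi$, and the pointwise scalar convergence from (ii). Since convergence in probability can be tested by a.s. convergence along subsequences, fix an arbitrary subsequence $(n_k)$. By (i), extract a further subsequence, still denoted $(n_{k_j})$, along which $M(\omega) := \sup_j \|\widetilde{F}^{n_{k_j}}(\omega,\cdot)\|_{Var([0,T])}$ is almost surely finite. Using the separability of $\chi$, fix a countable dense subset $D=\{\phi_m\}_{m\in\N}\subset\chi$, let $Q=\mathbb{Q}\cap[0,T]$, and write $Q^t=Q\cup\{t\}$. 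By (ii) applied at each pair $(\phi_m,r)\in D\times Q^t$, together with a standard diagonal extraction, refine $(n_{k_j})$ so that almost surely $\widetilde{F}^{n_{k_j}}(\omega,r)(\phi_m)\to\widetilde{F}(\omega,r)(\phi_m)$ for every such pair. By (iv), $\widetilde{F}^{n_{k_j}}(\omega,0)=0$ in $\chi^{\ast}$ and, by passing to the limit, $\widetilde{F}(\omega,0)=0$ as well; consequently $\|\widetilde{F}^{n_{k_j}}(\omega,r)\|_{\chi^{\ast}}\le M(\omega)$ uniformly in $j$ and $r$. A standard $3\varepsilon$-argument using the density of $D$ and these uniform bounds upgrades the scalar convergence to the weak-star convergence $\widetilde{F}^{n_{k_j}}(\omega,r)\to\widetilde{F}(\omega,r)$ in $\chi^{\ast}$, valid for every $\phi\in\chi$ and every $r\in Q^t$, outside a null exceptional set.

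Fix $\omega$ outside this null set and $\eta>0$. Since $H(\omega,\cdot):[0,T]\to\chi$ is continuous hence uniformly continuous, choose a partition $0=t_0<t_1<\dots<t_N=t$ with $t_1,\dots,t_{N-1}\in Q$ such that $\|H(\omega,s)-H(\omega,t_i)\|_{\chi}<\eta$ for every $s\in[t_i,t_{i+1}]$. Define the Riemann sum
\begin{equation*}
S_j := \sum_{i=0}^{N-1}\prescript{}{\chi}{\langle} H(\omega,t_i),\,\widetilde{F}^{n_{k_j}}(\omega,t_{i+1})-\widetilde{F}^{n_{k_j}}(\omega,t_i)\rangle_{\chi^{\ast}}.
\end{equation*}
Since each node $t_i$ lies in $Q^t$, the weak-star convergence from the previous step gives $S_j\to S_{\infty}:=\sum_i\langle H(\omega,t_i),\widetilde{F}(\omega,t_{i+1})-\widetilde{F}(\omega,t_i)\rangle$ as $j\to\infty$. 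Moreover, the uniform BV bound controls the approximation errors uniformly in $j$:
\begin{equation*}
\left|S_j-\int_0^t\prescript{}{\chi}{\langle} H(\omega,s),d\widetilde{F}^{n_{k_j}}(\omega,s)\rangle_{\chi^{\ast}}\right|\le\eta\cdot\|\widetilde{F}^{n_{k_j}}(\omega,\cdot)\|_{Var([0,T])}\le\eta M(\omega),
\end{equation*}
and analogously $|S_{\infty}-\int_0^t\langle H,d\widetilde{F}\rangle|\le\eta\,\|\widetilde{F}(\omega,\cdot)\|_{Var}$, which is finite by (iii).

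A triangle inequality then shows that $\limsup_{j\to\infty}|\int_0^t\langle H,d\widetilde{F}^{n_{k_j}}\rangle-\int_0^t\langle H,d\widetilde{F}\rangle|$ is bounded by $\eta(M(\omega)+\|\widetilde{F}(\omega,\cdot)\|_{Var})$; letting $\eta\to 0$ yields pointwise convergence outside the null set along the refined subsequence, hence convergence in probability along the original $(n_k)$, and, since $(n_k)$ was arbitrary, along the full sequence. The main obstacle is the first step: upgrading pointwise scalar convergence at countably many pairs $(\phi_m,r)$ to a uniformly controlled weak-star convergence in $\chi^{\ast}$ at all the partition nodes, strong enough to drive an infinite-dimensional Stieltjes integral; this is precisely where the separability of $\chi$ and the uniform BV bound from (i) work together.
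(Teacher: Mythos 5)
Your proposal is correct and follows essentially the same route as the paper's proof: a subsequence extraction giving a uniform a.s. bound on $\sup_j\|\widetilde{F}^{n_{k_j}}\|_{Var([0,T])}$, an upgrade of the scalar convergence from a countable dense subset of $\chi$ to all of $\chi$ at the partition nodes (the paper isolates this as Lemma \ref{lem IMPR}), and a Riemann-sum decomposition whose two approximation errors are controlled by $\varpi_{H(\omega,\cdot)}(\delta)$ times the total variations while the node term converges by the weak-star limit. The only cosmetic difference is that you fix rational partition nodes and diagonalize once over $D\times Q^t$, whereas the paper fixes the partition first and applies its lemma to the finitely many nodes; both arguments are sound.
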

\begin{proof} See Appendix \ref{app proof}.
\end{proof}
\begin{cor}       \label{pr CONVCCOV}
Let $B_{1}$, $B_{2}$ be two Banach spaces and $\chi$ be a Chi-subspace of 
$(B_{1}\hat{\otimes}_{\pi} B_{2})^{\ast}$. 
Let $\X$ and $\Y$ be two stochastic processes with values 
respectively in $B_{1}$ and $B_{2}$ such that $(\X,\Y)$ admits   
 a $\chi$-covariation 
and $\mathbb{H}$) be a continuous measurable process $\mathbb{H}:\Omega\times [0,T] \longrightarrow \mathcal{V}$ where $\mathcal{V}$ is a 
closed separable subspace of $\chi$. 
Then, for every $t\in [0,T]$,
\begin{equation}   		\label{eq SDFR}  
\int_{0}^{t} \prescript{}{\chi}{\langle} \mathbb{H}(\cdot,s),d\widetilde{[\X,\Y]}^{\epsilon}(\cdot,s)\rangle_{\chi^{\ast}}
\xrightarrow[\epsilon \longrightarrow 0]{} \int_{0}^{t} \prescript{}{\chi}{\langle} \mathbb{H}(\cdot,s),d\widetilde{[\X,\Y]}(\cdot,s)\rangle_{\chi^{\ast}} \quad \mbox{in probability.}
\end{equation}
\end{cor}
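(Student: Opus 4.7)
The plan is to deduce the corollary from the technical Proposition \ref{pr IMPR}, applied after restricting the entire setup from $\chi$ to its separable closed subspace $\mathcal{V}$. Indeed, Proposition \ref{pr IMPR} requires separability of the test space, whereas $\chi$ itself may fail to be separable, so this restriction step is essential. Note that convergence in probability along the continuous parameter $\epsilon \downarrow 0$ is equivalent to convergence in probability along every sequence $\epsilon_n \downarrow 0$, so I would fix an arbitrary such sequence and work with it.

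The first step is to set, for $\phi \in \mathcal{V}$, $F^n(\phi) := [\X,\Y]^{\epsilon_n}(\phi)$ and $F(\phi) := [\X,\Y](\phi)$, and to define $\widetilde{F}^n(\omega,t)$ and $\widetilde{F}(\omega,t)$ as the restrictions to $\mathcal{V}$ of $\widetilde{[\X,\Y]}^{\epsilon_n}(\omega,t)$ and $\widetilde{[\X,\Y]}(\omega,t)$ respectively. Since $\mathcal{V} \subset \chi$ continuously, we have $\|\cdot\|_{\mathcal{V}^\ast} \leq \|\cdot\|_{\chi^\ast}$ on the common domain, so the restriction of a $\chi^\ast$-valued function of bounded variation on $[0,T]$ is a $\mathcal{V}^\ast$-valued function of bounded variation, with total variation bounded by the original one.

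The second step is to verify the four hypotheses of Proposition \ref{pr IMPR}. Hypothesis (i) follows from Condition \textbf{H1} in the definition of $\chi$-covariation via Remark \ref{rem CSH1} item 2: for any sequence $(n_k)$ one extracts a further subsequence $(n_{k_j})$ such that $\sup_j \|\widetilde{[\X,\Y]}^{\epsilon_{n_{k_j}}}\|_{Var([0,T])} < \infty$ a.s., and the same bound then holds for the restrictions. Hypothesis (ii) is exactly Condition \textbf{H2}(i), which gives ucp (hence pointwise in probability) convergence of $[\X,\Y]^{\epsilon_n}(\phi)$ to $[\X,\Y](\phi)$ for every $\phi \in \chi$, in particular for $\phi \in \mathcal{V}$. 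Hypothesis (iii) is provided by Condition \textbf{H2}(ii), producing the measurable bounded-variation random field $\widetilde{[\X,\Y]}$ whose restriction to $\mathcal{V}$ serves as $\widetilde{F}$. Hypothesis (iv) is immediate from the fact that $[\X,\Y]^{\epsilon_n}(\phi)(0) = 0$, the defining integral being zero at $t=0$.

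The third step is to apply Proposition \ref{pr IMPR} with the continuous process $H := \mathbb{H}$, which by assumption takes values in $\mathcal{V}$, to obtain
\[
\int_0^t \langle \mathbb{H}(\cdot,s), d\widetilde{F}^n(\cdot,s) \rangle_{\mathcal{V}^\ast} \longrightarrow \int_0^t \langle \mathbb{H}(\cdot,s), d\widetilde{F}(\cdot,s) \rangle_{\mathcal{V}^\ast}
\]
in probability. Since $\mathbb{H}$ takes values in $\mathcal{V} \subset \chi$, the Stieltjes integrals on the left and on the right of \eqref{eq SDFR} coincide with these $\mathcal{V}^\ast$-valued integrals (the duality pairings agree on common inputs), concluding the proof. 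The main obstacle, already isolated inside Proposition \ref{pr IMPR}, is the control of a Stieltjes integral under merely pointwise (in $\phi$) convergence of the integrators; here this is handled upstream by the separability reduction and the uniform bounded variation control obtained from \textbf{H1}.
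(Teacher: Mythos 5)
Your proof is correct and follows essentially the same route as the paper: reduce to the separable closed subspace $\mathcal{V}$ (the paper does this by noting $\mathcal{V}$ is a Chi-subspace and invoking Proposition \ref{prop XQVSM} to get the $\mathcal{V}$-covariation, while you restrict the maps directly and check the hypotheses of Proposition \ref{pr IMPR} by hand, which amounts to the same thing), then apply Proposition \ref{pr IMPR} and use the compatibility of the $\chi$--$\chi^{\ast}$ and $\mathcal{V}$--$\mathcal{V}^{\ast}$ pairings. No gaps.
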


\begin{proof} \
By item 2 in Remark \ref{pro 3.2}, $\mathcal{V}$ is a Chi-subspace. 
By Proposition \ref{prop XQVSM}, $(\X,\Y)$ admits a $\mathcal{V}$-covariation $[\X,\Y]_{\mathcal{V}}$ 
and $[\X,\Y]_{\mathcal{V}}(\phi)=[\X,\Y](\phi)$ for all 
$\phi\in \mathcal{V}$; in the sequel of the proof, 
$[\X,\Y]_{\mathcal{V}}$ will be still denoted by 
$[\X,\Y]$. 
Since the ucp convergence implies  the convergence in probability for every $t\in[0,T]$, by Proposition \ref{pr IMPR} and definition of $\mathcal{V}$-covariation,
it follows 
\begin{equation}		\label{eq SDFR2}
\int_{0}^{t} \prescript{}{\mathcal{V}}{\langle} \mathbb{H}(\cdot,s),d\widetilde{[\X,\Y]}^{\epsilon}(\cdot,s)\rangle_{\mathcal{V}^{\ast}}
\xrightarrow[\epsilon\longrightarrow 0]{\mathbb{P}} \int_{0}^{t} \prescript{}{\mathcal{V}}{\langle} \mathbb{H}(\cdot,s),d\widetilde{[\X,\Y]}(\cdot,s)\rangle_{\mathcal{V}^{\ast}}  \; .
\end{equation}
Since the pairing duality between $\chi$ and $\chi^{\ast}$ is
 compatible with the one between $\mathcal{V}$ and $\mathcal{V}^{\ast}$, 
the result \eqref{eq SDFR} is now established.
\end{proof}
\subsubsection{Weaker conditions for the existence of the $\chi$-covariation}

An important and useful theorem which helps to  find sufficient conditions
 for the existence of
 the $\chi$-quadratic variation of a Banach space valued process is given below. 
It will be a consequence of
a Banach-Steinhaus type result for Fr\'echet spaces, see Theorem II.1.18, page. 55 in \cite{ds}.  
We start with a remark.
\begin{rem}		\label{rem TRE}
\item
\begin{enumerate} 
\item Let $(\Y^{n})$ be a sequence of random elements with values in a Banach space $(B,\|\cdot\|_{B})$ 
such that $\sup_{n} \,\|\Y^{n}\|_{B}\leq Z$ a.s. for some real positive random variable $Z$. 
Then $(\Y^{n})$ is {\bf{bounded}}\footnote{This notion plays a role in Banach-Steinhaus theorem in \cite{ds}.
Let $E$ be a Fr\'echet spaces, $F$-space shortly.  
A subset $C$ of $E$ is called {\bf bounded} if 
for all $\epsilon >0$ it exists $\delta_{\epsilon}$ such
that for all $0< \alpha\leq
\delta_{\epsilon}$, $\alpha C$ is included 
in the open ball $\mathcal{B}(0,\epsilon):=\{e\in E; d(0,e)< \epsilon\}$} in the $F$-space of random elements equipped with the convergence in probability 
which is governed by the metric 
$
d(\X,\Y)=\mathbb{E}\left[ \|\X-\Y\|_{B} \wedge 1 \right].
$
In fact by Lebesgue dominated convergence theorem it follows 
$\lim_{\gamma\rightarrow 0}\mathbb{E}[\gamma Z \wedge 1]=0$.
\item In particular taking $B=C([0,T])$ a sequence of continuous processes $(\Y^{n})$ such that 
$\sup_{n}\left\| \Y^{n}\right\|_{\infty}\leq Z$ a.s. is bounded for the usual metric 
in $\mathscr{C}([0,T])$ equipped with the topology related to the ucp convergence.
\end{enumerate}
\end{rem}

\begin{thm}   \label{thm QV}
Let $F^{n}:\chi\longrightarrow \mathscr{C}([0,T])$ be a sequence of
linear continuous maps such that
 $F^{n}(\phi)(0)=0$ a.s. and 
there is $\tilde{F}^{n}:\Omega\times [0,T]\longrightarrow \chi^{\ast}$ 
having a.s. bounded variation.
We formulate the following assumptions.
\begin{description}
\item [i)] $F^{n}(\phi)(\cdot,t)=\tilde{F}^{n}(\cdot,t)(\phi)$  a.s. $\forall\; t\in [0,T]$, $\phi\in\chi$.
\item [ii)] $\forall\;\phi\in \chi $, $t\mapsto \tilde{F}^{n}(\cdot, t)(\phi)$ is  c\`adl\`ag.
\item [iii)] $\sup_{n}\|\tilde{F}^{n}\|_{Var([0,T])}< \infty \quad$ a.s.
\item [iv)] There is a subset $\shs \subset \chi$ such that
$\overline{Span(\shs)}=\chi$ and 
a linear application $F: \shs \longrightarrow \mathscr{C}([0,T])$ such that 
$F^{n}(\phi)\longrightarrow F(\phi)$ ucp for every
$\phi\in \shs$.
\end{description}
\begin{description}
\item[1)] Suppose that $\chi$ is separable.\\
Then there is a linear and continuous extension $F:\chi\longrightarrow \mathscr{C}([0,T])$ and 
there is a measurable random field $\tilde{F}:\Omega \times[0,T]\longrightarrow \chi^{\ast}$ such that $\tilde{F}(\cdot,t)(\phi)=F(\phi)(\cdot,t)$ a.s. 
for every $t\in [0,T]$. 
Moreover the following properties hold.
\begin{description}
\item [a)] For every $\phi\in \chi$, $F^{n}(\phi)\xrightarrow{ucp} F(\phi)$.\\ 
In particular for every $t\in[0,T]$, $\phi\in\chi$,
$F^{n}(\phi)(\cdot, t)\xrightarrow {\mathbb{P}}F(\phi)(\omega, t)$.
\item [b)] $\tilde{F}$ has bounded variation   
and $t\mapsto \tilde{F}(\cdot,t)$ is 
weakly star continuous a.s.
\end{description}
\item [2)]Suppose the existence of a measurable $\tilde{F}:\Omega\times [0,T]\longrightarrow \chi^{\ast}$ such that a.s. 
$t\mapsto \tilde{F}(\cdot,t)$ has bounded variation and is 
weakly star  c\`adl\`ag such that 
\[
\tilde{F}(\cdot,t)(\phi)=F(\phi)(\cdot,t)  \hspace{0.5cm} \textrm{ a.s. }  \hspace{0.5cm}  \forall\; t\in [0,T],\;  \forall\; \phi\in \mathcal{S}\; .
\]
Then point \textbf{a)} still follows.
\end{description}
\end{thm}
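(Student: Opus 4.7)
The plan is to use the Banach--Steinhaus theorem for $F$-spaces (II.1.18 in \cite{ds}) as the main lever, handling part \textbf{2)} and point \textbf{a)} of part \textbf{1)} in one go, and then to construct the measurable random field $\tilde F$ of part \textbf{1)} by a Helly/Banach--Alaoglu selection exploiting separability. Both $\chi$ and the ucp space $\mathscr{C}([0,T])$ are $F$-spaces, and each $F^n$ is continuous linear between them, so Banach--Steinhaus applies as soon as pointwise boundedness of $\{F^n\}$ is verified.

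For the Banach--Steinhaus input, I will use assumption i) to write, for every $\phi\in\chi$,
\[
\sup_{t\in[0,T]}|F^n(\phi)(\omega,t)|
= \sup_{t\in[0,T]}|\tilde F^n(\omega,t)(\phi)|
\le \|\phi\|_\chi\,\|\tilde F^n(\omega,\cdot)\|_{Var([0,T])}.
\]
Condition iii) forces the right-hand side to be a.s.\ bounded in $n$ along a subsequence, so Remark \ref{rem TRE} turns $\{F^n(\phi)\}_n$ into a bounded subset of $\mathscr{C}([0,T])$ for each $\phi$. Banach--Steinhaus then yields equicontinuity of $\{F^n\}$ at $0$. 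Condition iv), together with linearity, gives ucp convergence $F^n(\phi)\to F(\phi)$ on the dense subspace $\mathrm{Span}(\mathcal{S})$; combining this with equicontinuity via a standard $3\epsilon$ argument extends $F$ to a linear continuous map $\chi\to\mathscr{C}([0,T])$ and delivers ucp convergence for every $\phi\in\chi$. Since the same reasoning applies to any subsequence, $F^n(\phi)\to F(\phi)$ in ucp; this is point \textbf{a)}, which already settles part \textbf{2)} without further work.

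The construction of $\tilde F$ in part \textbf{1)} is where separability really enters and will be the main obstacle. I would fix a countable dense sequence $\{\phi_k\}\subset\chi$ and, by a diagonal extraction, select a subsequence $(n_j)$ and a full-measure event $\Omega_0$ on which simultaneously $\sup_j\|\tilde F^{n_j}(\omega,\cdot)\|_{Var([0,T])}<\infty$ and $\tilde F^{n_j}(\omega,t)(\phi_k)\to F(\phi_k)(\omega,t)$ uniformly in $t$ for every $k$. For $\omega\in\Omega_0$ the family $\{\tilde F^{n_j}(\omega,t)\}_{j,t}$ lies in a common ball of $\chi^\ast$, so Banach--Alaoglu combined with a Helly-type diagonal extraction along the rationals of $[0,T]$ produces a weak star limit $\tilde F(\omega,t)\in\chi^\ast$ at dyadic times, which extends to all $t$ using continuity of each $F(\phi_k)(\omega,\cdot)$. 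The limit inherits the uniform variation bound; agreement with $F(\phi)$ on $\{\phi_k\}$ extends to all $\phi\in\chi$ by continuity of both sides in $\phi$, and weak star continuity in $t$ follows from continuity of $F(\phi)$ for each fixed $\phi$. Measurability of $\tilde F$ comes from that of the approximants $\tilde F^{n_j}$ restricted to the countable generating set. The delicate point, and the reason separability is indispensable, is to collapse all the exceptional null sets into the single $\Omega_0$ and to secure $\omega$-measurability of the weak star limit; once that is in place, both bounded variation and weak star continuity asserted in \textbf{b)} are immediate.
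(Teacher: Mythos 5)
Your proposal is correct and follows essentially the same route as the paper: pointwise boundedness of $\{F^{n}(\phi)\}$ in the $F$-space $\mathscr{C}([0,T])$ via the uniform variation bound from assumptions i) and iii), Banach--Steinhaus combined with convergence on the dense span to obtain point \textbf{a)} (hence part \textbf{2)}), then a diagonal extraction over a countable dense subset and weak-star lower semicontinuity of the variation for point \textbf{b)}. The one deviation is your Banach--Alaoglu/Helly selection to produce $\tilde{F}(\omega,t)$, which is redundant: the limit is already uniquely determined on the dense set as $F(\phi_{k})(\omega,t)$, so the paper simply sets $\tilde{F}(\omega,t)(\phi_{k}):=F(\phi_{k})(\omega,t)$ and extends to all of $\chi$ by a second, $\omega$-wise application of Banach--Steinhaus, which also disposes of the measurability issue you flag (the extension is a limit of the measurable processes $\tilde{F}^{n_{k}}$, with no $\omega$-dependent subsequence needed).
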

\begin{rem}
In point \textbf{2)} we do not necessarily suppose $\chi$ to be separable.
\end{rem}
\begin{proof}See Appendix \ref{app proof}.
\end{proof}
Important implications of Theorem \ref{thm QV} are Corollaries \ref{cor SGS} and \ref{corollary}, which give us easier conditions 
for the existence of the 
$\chi$-covariation as anticipated in Remark \ref{rem PPO} item 4.
\begin{cor}	\label{cor SGS}
Let $B_{1}$ and $B_{2}$ be Banach spaces, $\X$ (resp. $\Y$) be a $B_{1}$-valued (resp. $B_{2}$-valued) stochastic process 
and $\chi$ be a separable Chi-subspace of $(B_{1}\hat{\otimes}_{\pi}B_{2})^{\ast}$. 
We suppose the following.
\begin{description}
\item [H0'] There is $\mathcal{S}\subset \chi$ such that $\overline{Span({\cal S})}=\chi$. 
\item [H1] For every sequence $(\epsilon_{n})\downarrow 0$ there is a
subsequence $(\epsilon_{n_{k}})$ such that 
\[
\sup_{k}\int_{0}^{T}\sup_{\|\phi\|_{\chi}\leq 1}\left|\prescript{}{\chi}{\langle} \phi,\frac{(\X_{s+\epsilon_{n_{k}}}-\X_{s})
\otimes (\Y_{s+\epsilon_{n_{k}}}-\Y_{s})}{\epsilon_{n_{k}}}\rangle_{\chi^{\ast}} \right|ds\quad < +\infty  \; .
\]
\item [H2'] There is $\mathcal{T}:\chi\longrightarrow \mathscr{C}([0,T])$ such that 
$[\X,\Y]^{\epsilon}(\phi)(t)\rightarrow \mathcal{T}(\phi)(t)$ ucp for all $\phi\in \mathcal{S}$.
\end{description}
Then $(\X,\Y)$ admits a $\chi$-covariation and the
 application $[\X,\Y]$ is equal to $\mathcal{T}$.
\end{cor}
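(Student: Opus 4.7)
The strategy is to reduce the statement to Theorem \ref{thm QV} part \textbf{1)} combined with a standard subsequence principle, since the conditions \textbf{H0'}, \textbf{H1}, \textbf{H2'} are essentially the hypotheses of that theorem applied to the approximating family $F^\epsilon := [\X,\Y]^\epsilon$.

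Fix an arbitrary sequence $\epsilon_n \downarrow 0$ and set $F^n := [\X,\Y]^{\epsilon_n}$, regarded via \eqref{eq Xepsilon} as a linear continuous map $\chi \to \mathscr{C}([0,T])$ satisfying $F^n(\phi)(0)=0$. Associate to it $\widetilde F^n : \Omega \times [0,T] \to \chi^\ast$ defined by $\widetilde F^n(\omega,t)(\phi) = F^n(\phi)(\omega,t)$; each $\widetilde F^n(\omega,\cdot)$ has bounded variation on $[0,T]$, with the standard estimate recalled in Remark \ref{rem CSH1} item 2, and $t \mapsto \widetilde F^n(\cdot,t)(\phi)$ is continuous for every $\phi$. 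Invoking \textbf{H1} along $(\epsilon_n)$ extracts a subsequence $(\epsilon_{n_k})$ along which $\sup_k \|\widetilde F^{n_k}\|_{Var([0,T])} < \infty$ almost surely, so hypotheses (i), (ii), (iii) and the initial value requirement of Theorem \ref{thm QV} are fulfilled for $(F^{n_k})$; hypothesis (iv) is precisely \textbf{H0'} together with \textbf{H2'}, taking the restriction $\mathcal{T}|_{\mathcal{S}}$ as the limiting map on the dense subset.

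Since $\chi$ is separable, Theorem \ref{thm QV} part \textbf{1)} then produces a linear continuous extension $F : \chi \to \mathscr{C}([0,T])$ of $\mathcal{T}|_{\mathcal{S}}$ together with a measurable field $\widetilde F : \Omega \times [0,T] \to \chi^\ast$ of bounded variation satisfying $\widetilde F(\cdot,t)(\phi) = F(\phi)(\cdot,t)$ a.s., and $t \mapsto \widetilde F(\cdot,t)$ weak star continuous a.s., with $F^{n_k}(\phi) \xrightarrow{ucp} F(\phi)$ for every $\phi \in \chi$. The crucial observation is that $F$ does not depend on the chosen initial sequence: by \textbf{H2'} and linearity $F$ agrees with $\mathcal{T}$ on $\mathrm{Span}(\mathcal{S})$, and by continuity of $F$ together with the density provided by \textbf{H0'}, $F$ is the \emph{unique} continuous extension $\overline{\mathcal{T}}$ of $\mathcal{T}|_{\mathcal{S}}$ to $\chi$.

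Consequently every sequence $\epsilon_n \downarrow 0$ admits a subsequence along which $F^{n_k}(\phi) \xrightarrow{ucp} \overline{\mathcal{T}}(\phi)$ for each $\phi \in \chi$; the standard subsequence criterion then upgrades this to ucp convergence of the full net $[\X,\Y]^\epsilon(\phi) \to \overline{\mathcal{T}}(\phi)$ as $\epsilon \downarrow 0$, establishing \textbf{H2}(i) of Definition \ref{def CHICOV} with $[\X,\Y] := \overline{\mathcal{T}}$, which coincides with $\mathcal{T}$ on $\mathcal{S}$. Condition \textbf{H2}(ii) is immediate from the $\widetilde F$ produced by Theorem \ref{thm QV} \textbf{b)}, while \textbf{H1} of Definition \ref{def CHICOV} is identical to the hypothesis \textbf{H1} of the corollary. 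The only real subtlety is the coherence of the subsequentially extracted limits (ensuring that $\widetilde F$ is intrinsically defined and not an artefact of the chosen subsequence), which is guaranteed by the uniqueness of the continuous extension of $\mathcal{T}|_{\mathcal{S}}$ to $\chi$.
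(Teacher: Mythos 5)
Your proof is correct and follows essentially the same route as the paper, which simply invokes Theorem \ref{thm QV} with $F^n(\phi)(\cdot,t)=[\X,\Y]^{\epsilon_n}(\phi)(t)$ and $\tilde F^n=\widetilde{[\X,\Y]}^{\epsilon_n}$ ``for a suitable sequence $(\epsilon_n)$''. You merely make explicit what that phrase conceals — the extraction of a subsequence with a uniform variation bound via \textbf{H1}, and the fact that the limit must be the unique continuous extension of $\mathcal{T}|_{\mathcal{S}}$, so the full net converges by the metrizability of the ucp topology — which is exactly the intended argument.
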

\begin{proof}
Condition {\bf H1} is verified by assumption. Conditions {\bf H2}(i) 
and (ii) follow by Theorem \ref{thm QV}
setting $ F^n(\phi)(\cdot,t) = [\X,\Y]^{\epsilon_n}(\phi)(t)$ and
$\tilde {F}^n = \widetilde{[\X,\Y]}^{\epsilon_n}$
for a suitable sequence $(\epsilon_n)$.
\end{proof}
In the case $\X=\Y$ and $B=B_{1}=B_{2}$ we can further relax the hypotheses.
\begin{cor}		 \label{corollary} 
Let $B$ be a Banach space, $\X$ a be $B$-valued stochastic processes and $\chi$ 
be a separable Chi-subspace. 
We suppose the following.
\begin{description}
\item [H0''] There are subsets $\mathcal{S}$, $\mathcal{S}^{p}$ of $\chi$ such that 
$\overline{Span({\cal S})}=\chi$, $Span({\cal S})=Span({\cal S}^{p})$ and 
$\mathcal{S}^{p}$ is constituted by \textbf{positive definite} elements $\phi$ 
in the sense that 
$\langle \phi, b \otimes b\rangle \geq 0$ for all $b \in B$. 
\item [H1] For every sequence $(\epsilon_{n})\downarrow 0$ there is a
subsequence $(\epsilon_{n_{k}})$ such that 
\[
\sup_{k}\int_{0}^{T}\sup_{\|\phi\|_{\chi}\leq 1}\left|\prescript{}{\chi}{\langle} \phi,\frac{(\X_{s+\epsilon_{n_{k}}}-\X_{s})
\otimes^{2}}{\epsilon_{n_{k}}}\rangle_{\chi^{\ast}} \right|ds\quad < +\infty  \; .
\]
\item [H2''] There is $\mathcal{T}:\chi\longrightarrow \mathscr{C}([0,T])$ such that 
$[\X]^{\epsilon}(\phi)(t)\rightarrow \mathcal{T}(\phi)(t)$ in probability for every $\phi\in \mathcal{S}$ and for every $t\in [0,T]$.
\end{description}
Then $\X$ admits a $\chi$-quadratic variation and application $[\X]$ is equal to $\mathcal{T}$.
\end{cor}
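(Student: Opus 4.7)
The plan is to reduce the statement to Corollary \ref{cor SGS} by upgrading the pointwise-in-probability convergence assumed in \textbf{H2''} to a ucp convergence, exploiting the positive definiteness of the elements of $\mathcal{S}^{p}$. Since $\overline{Span(\mathcal{S}^{p})}=\overline{Span(\mathcal{S})}=\chi$ by hypothesis \textbf{H0''}, once ucp convergence is established on $\mathcal{S}^{p}$, Corollary \ref{cor SGS} applies with $\mathcal{S}^{p}$ in the role of $\mathcal{S}$.

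First I would observe that for every $\phi\in \mathcal{S}^{p}$, positive definiteness gives
$\prescript{}{\chi}{\langle}\phi,(\X_{s+\epsilon}-\X_{s})\otimes^{2}\rangle_{\chi^{\ast}}/\epsilon\geq 0$, so that $t\mapsto [\X]^{\epsilon}(\phi)(t)$ is a continuous \emph{increasing} process. Next, using the equality $Span(\mathcal{S}^{p})=Span(\mathcal{S})$, each $\phi\in \mathcal{S}^{p}$ can be written $\phi=\sum_{i}c_{i}\psi_{i}$ with $\psi_{i}\in \mathcal{S}$. The linearity of $[\X]^{\epsilon}(\cdot)(t)$ in the Chi-argument, together with \textbf{H2''}, yields
\[
[\X]^{\epsilon}(\phi)(t)=\sum_{i}c_{i}[\X]^{\epsilon}(\psi_{i})(t)\xrightarrow[\epsilon\to 0]{\mathbb{P}}\sum_{i}c_{i}\mathcal{T}(\psi_{i})(t)=:\tilde{\mathcal{T}}(\phi)(t)
\]
for every $t\in[0,T]$. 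Since each $\mathcal{T}(\psi_{i})$ belongs to $\mathscr{C}([0,T])$, the map $\tilde{\mathcal{T}}(\phi)$ is a continuous process; it is moreover increasing as a pointwise-in-probability limit of increasing continuous processes.

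The key step is the Dini-type upgrade: once it is known that the continuous increasing processes $[\X]^{\epsilon}(\phi)$ converge, for each fixed $t$, in probability to the continuous increasing process $\tilde{\mathcal{T}}(\phi)$, Lemma 3.1 of \cite{rv4} (recalled in the Introduction) promotes this pointwise-in-probability convergence to a ucp convergence. This is the only non-formal ingredient and is the main obstacle; everything else is bookkeeping.

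Having obtained $[\X]^{\epsilon}(\phi)\xrightarrow{ucp}\tilde{\mathcal{T}}(\phi)$ for every $\phi\in \mathcal{S}^{p}$, I would finally invoke Corollary \ref{cor SGS} applied with the subset $\mathcal{S}^{p}$ (which satisfies $\overline{Span(\mathcal{S}^{p})}=\chi$ as noted above, together with the unchanged hypothesis \textbf{H1}) to conclude that $\X$ admits a $\chi$-quadratic variation $[\X]$, with $[\X](\phi)=\tilde{\mathcal{T}}(\phi)$ for $\phi\in \mathcal{S}^{p}$. Linearity of $[\X]$ and of the extension forces $[\X]=\mathcal{T}$ on $\mathcal{S}$, and by continuity of both maps $\chi\to \mathscr{C}([0,T])$ (granted by Corollary \ref{cor SGS}, resp.\ by assumption), the equality $[\X]=\mathcal{T}$ propagates to all of $\chi$.
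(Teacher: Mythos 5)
Your proposal is correct and follows essentially the same route as the paper: positive definiteness of the elements of $\mathcal{S}^{p}$ makes $[\X]^{\epsilon}(\phi)$ increasing, linearity together with $Span(\mathcal{S})=Span(\mathcal{S}^{p})$ transfers the pointwise convergence in probability from $\mathcal{S}$ to $\mathcal{S}^{p}$, Lemma 3.1 of \cite{rv4} upgrades it to ucp, and Corollary \ref{cor SGS} concludes. The only cosmetic difference is that the paper transfers the ucp convergence back to $\mathcal{S}$ before invoking Corollary \ref{cor SGS}, whereas you apply it directly with $\mathcal{S}^{p}$; both are equivalent.
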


\begin{proof} \
We verify the conditions of Corollary \ref{cor SGS}. Conditions {\bf H0'} and {\bf H1} are verified by assumption. 
We observe that, for every $\phi\in \shs^{p}$, $[\X]^{\epsilon}(\phi)$ is an increasing process. 
By linearity, it follows that for any $\phi\in \shs^{p}$, $[\X]^{\epsilon}(\phi)(t)$ converges in probability to $\mathcal{T}(\phi)(t)$ for any
 $t \in [0,T]$.
Lemma 3.1 in  \cite{rv4} implies that $[\X]^{\epsilon}(\phi)$ converges ucp for every $\phi\in \shs^{p}$ and therefore in $\shs$. 
Conditions {\bf H2'} of Corollary \ref{cor SGS} is now verified.
%
\end{proof}
When $\chi$ is finite dimensional the notion of $\chi$-quadratic variation becomes very natural.
\begin{prop}	\label{pr FinDim}
Let $\chi=Span\{ \phi_{1},\ldots, \phi_{n}\}$, $\phi_{1},\ldots, \phi_{n}\in (B\hat{\otimes}_{\pi}B)^{\ast}$ of positive type and linearly independent. 
$\X$ has a $\chi$-quadratic variation if and only if  there are continuous processes $Z^{i}$ such that
$
[\X]^{\epsilon}_{t}(\phi_{i})
$
converges in probability to $Z^{i}_{t}$ for $\epsilon$ going to zero for all $t\in [0,T]$ and $i=1,\ldots, n$. 
\end{prop}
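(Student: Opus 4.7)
The strategy is to obtain the result as a direct consequence of Corollary \ref{corollary}. The ``only if'' direction is immediate from Definition \ref{def CHICOV}: if $\X$ admits a $\chi$-quadratic variation $[\X]$, then $[\X]^{\epsilon}(\phi_i)$ converges ucp to the continuous process $Z^i := [\X](\phi_i)$ for each $i = 1, \ldots, n$, and ucp convergence implies convergence in probability at every fixed $t$.

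For the converse, I plan to apply Corollary \ref{corollary} with the finite sets $\mathcal{S} = \mathcal{S}^p := \{\phi_1, \ldots, \phi_n\}$. Condition {\bf H0''} holds because $\chi = Span(\mathcal{S})$ is already closed (being finite-dimensional) and the $\phi_i$ are of positive type by hypothesis. Condition {\bf H2''} is precisely the assumption of the proposition, with limit map $\mathcal{T}(\phi_i) := Z^i$ extended by linearity to $\chi$. Everything therefore reduces to verifying condition {\bf H1}.

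To verify {\bf H1}, I exploit the finite dimensionality of $\chi$: since $\{\phi_1,\ldots,\phi_n\}$ is a basis, the map $\phi = \sum_i \alpha_i(\phi) \phi_i \mapsto \sum_i |\alpha_i(\phi)|$ is a norm on $\chi$, hence equivalent to $\|\cdot\|_\chi$, so there exists a constant $C > 0$ with $\sum_{i=1}^n |\alpha_i(\phi)| \leq C \|\phi\|_\chi$ for every $\phi \in \chi$. By duality this gives the pointwise bound $\|u\|_{\chi^\ast} \leq C \max_{1 \leq i \leq n} |\langle \phi_i, u \rangle|$ for every $u \in \chi^\ast$. Applied to $u_s^\epsilon := (\X_{s+\epsilon} - \X_s)^{\otimes 2}/\epsilon$, the positive-type hypothesis $\langle \phi_i, b \otimes b \rangle \geq 0$ makes the integrands nonnegative, so the absolute values drop and
\begin{equation*}
\int_0^T \|u_s^\epsilon\|_{\chi^\ast}\, ds \;\leq\; C \sum_{i=1}^n \int_0^T \langle \phi_i, u_s^\epsilon \rangle\, ds \;=\; C \sum_{i=1}^n [\X]^{\epsilon}_T(\phi_i).
\end{equation*}
Given any $(\epsilon_n) \downarrow 0$, each $[\X]^{\epsilon_n}_T(\phi_i)$ converges in probability to $Z^i_T$, so by a standard diagonal extraction one can select a common subsequence $(\epsilon_{n_k})$ along which $[\X]^{\epsilon_{n_k}}_T(\phi_i) \to Z^i_T$ almost surely for every $i$. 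The right-hand side above is then a.s. bounded in $k$, yielding {\bf H1}. Corollary \ref{corollary} then produces the $\chi$-quadratic variation, and evaluating at each $\phi_i$ gives $[\X](\phi_i) = Z^i$ by construction.

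The main obstacle is the verification of {\bf H1}; it is precisely the step that requires both ingredients of the hypothesis simultaneously: the finite dimensionality of $\chi$ (to reduce the supremum over the unit ball of $\chi$ to a maximum over the basis) and the positive-type assumption on each $\phi_i$ (to turn the pointwise-in-probability convergence of the unsigned scalar quantities $[\X]^\epsilon_T(\phi_i)$ into an almost sure uniform bound on the $\chi^\ast$-norm of the rescaled tensor increments after subsequence extraction).
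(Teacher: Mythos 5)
Your proposal is correct and follows essentially the same route as the paper: both directions are handled identically, the sufficiency is reduced to Corollary \ref{corollary} with $\mathcal{S}=\mathcal{S}^p=\{\phi_1,\ldots,\phi_n\}$, and condition \textbf{H1} is verified by combining the equivalence of norms on the finite-dimensional $\chi$ (the paper simply fixes the norm $\|\sum_i a_i\phi_i\|_\chi=\sum_i|a_i|$, which makes your constant $C$ equal to $1$) with the positive-type hypothesis to drop absolute values and bound the $\chi^\ast$-norm integral by $\sum_i[\X]^\epsilon_T(\phi_i)$, concluding via the convergence in probability of these quantities as in Remark \ref{rem CSH1}. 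No substantive differences.
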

\begin{proof} 
We only need to show that the condition is sufficient, the converse implication resulting immediately. 
We verify the hypotheses of Corollary \ref{corollary} taking $\mathcal{S}=\{\phi_{1}, \ldots, \phi_{n} \}$. 
Without restriction to generality we can suppose $\|\phi_{i}\|_{(B\hat{\otimes}_{\pi}B)^{\ast}}=1$, for $1\leq i\leq n$.
Conditions \textbf{H0''} and \textbf{H2''} are straightforward. 
It remains to verify \textbf{H1}.
Since $\chi$ is finite dimensional it can be equipped with the norm 
$\| \phi\|_{\chi} = \sum_{i=1}^{n}|a_{i}|$ if 
$\phi=\sum_{i=1}^{n}a_{i}\, \phi_{i}$ with $a_{i}\in \R$.
For $\phi$ such that $\|\phi\|_{\chi}=\sum_{i=1}^{n}|a_{i}| \leq 1$ we have
\begin{equation}
\begin{split}
\frac{1}{\epsilon}\int_{0}^{T}\left| \langle \phi \, ,\, \X_{s+\epsilon}-\X_{s})\otimes^{2} \rangle \right| ds 
&
\leq \sum_{i=1}^{n} \frac{1}{\epsilon}\int_{0}^{T} \left| \langle a_{i}\, \phi_{i} \, ,\, (\X_{s+\epsilon}-\X_{s})\otimes^{2} \rangle   \right| ds =
\sum_{i=1}^{n} \frac{ |a_{i}|}{\epsilon}\int_{0}^{T}   \langle \phi_{i} \, ,\, (\X_{s+\epsilon}-\X_{s})\otimes^{2} \rangle  ds,
\end{split}
\end{equation}
because $ \phi_{i}$ are of positive type. Previous expression is smaller or equal than 
\[
\sum_{i=1}^{n}\frac{ 1}{\epsilon}\int_{0}^{T} \langle \phi_{i}\, , \, (X_{s+\epsilon}-X_{s})\otimes^{2} \rangle=\sum_{i=1}^{n}\; [\X]^{\epsilon}_{T}(\phi_{i})
\]
because $|a_{i}|\leq 1$ for $1\leq i\leq n$. Taking the supremum over $\|\phi\|_{\chi}\leq 1$ and using the hypothesis of convergence in probability of the quantity 
$[\X]^{\epsilon}_{T}(\phi_{i})$ for $1\leq i\leq n$, the result follows.
\end{proof}
\begin{cor}  \label{cor finD}
Let $B_{1}=B_{2}=\R^{n}$. $\X$ admits all its mutual brackets if and only if $\X$ admits a global quadratic variation.
\end{cor}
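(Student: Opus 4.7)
The plan is to reduce directly to the Definition \ref{def CHICOV} of the $\chi$-quadratic variation for the Chi-subspace $\chi=(\mathbb{R}^n\hat{\otimes}_\pi\mathbb{R}^n)^\ast$. Since everything is finite-dimensional, $\chi$ is canonically identified with $\mathbb{M}_{n\times n}(\mathbb{R})$, $\chi^\ast$ with $\mathbb{R}^n\hat\otimes_\pi\mathbb{R}^n$, and the pairing is $\langle\phi,a\otimes b\rangle=\sum_{ij}\phi_{ij}a^ib^j$. For the \emph{only if} direction I would fix a pair $(i,j)$, take $\phi=e_ie_j^\top\in\chi$, and observe that $[\X]^\epsilon_t(\phi)=[X^i,X^j]^\epsilon_t$; the ucp convergence granted by H2(i) for the global quadratic variation immediately supplies the real covariation $[X^i,X^j]$.

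For the \emph{if} direction, assuming every $[X^i,X^j]$ exists, I would verify the three conditions of Definition \ref{def CHICOV} in turn. Condition H1 comes out of Remark \ref{rem CSH1}(1): equipping $\mathbb{R}^n$ with the Euclidean norm, equivalence of norms in finite dimension yields a constant $C>0$ with
\begin{equation*}
\|(\X_{s+\epsilon}-\X_s)^{\otimes 2}\|_{\chi^\ast}\le C\,\|\X_{s+\epsilon}-\X_s\|^2=C\sum_{i=1}^n(X^i_{s+\epsilon}-X^i_s)^2,
\end{equation*}
so that $\frac{1}{\epsilon}\int_0^T\|(\X_{s+\epsilon}-\X_s)^{\otimes 2}\|_{\chi^\ast}\,ds\le C\sum_i[X^i]^\epsilon_T$, which converges in probability to $C\sum_i[X^i]_T$ by hypothesis. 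Condition H2(i) comes from linearity: writing $\phi=\sum_{ij}\phi_{ij}e_ie_j^\top$ one gets $[\X]^\epsilon_\cdot(\phi)=\sum_{ij}\phi_{ij}[X^i,X^j]^\epsilon_\cdot$, which converges ucp to the process $t\mapsto\sum_{ij}\phi_{ij}[X^i,X^j]_t$ because each bracket converges ucp (item 1 of the Remark after Definition \ref{def cov}).

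For H2(ii), I would set $\widetilde{[\X]}_t:=\sum_{ij}[X^i,X^j]_t\,(e_i\otimes e_j)\in\chi^\ast$. Measurability and weak continuity are obvious; bounded variation paths follow from polarization, $[X^i,X^j]=\tfrac{1}{2}\bigl([X^i+X^j]-[X^i]-[X^j]\bigr)$, expressing each mutual bracket as a difference of increasing processes. Then $\widetilde{[\X]}_t(\phi)$ matches the limit identified in H2(i) for every $\phi\in\chi$, closing the argument.

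There is really no serious obstacle: the content is entirely the bilinearity of the pairing together with norm equivalence. An equally valid alternative would be to invoke Proposition \ref{pr FinDim} with the explicit basis of $n^2$ positive type matrices $\{e_ie_i^\top\}_{i}\cup\{(e_i+e_j)(e_i+e_j)^\top\}_{i<j}\cup\{e_ie_j^\top-e_je_i^\top\}_{i<j}$ (the antisymmetric ones being positive type since they vanish on symmetric tensors $b\otimes b$), but the direct verification above is arguably more transparent and yields the explicit formula $\widetilde{[\X]}_t=\sum_{ij}[X^i,X^j]_t\,(e_i\otimes e_j)$ as a by-product.
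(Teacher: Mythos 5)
Your proposal is correct. The paper itself gives no proof of this corollary: it is placed immediately after Proposition \ref{pr FinDim} and is intended as a direct consequence of it, exactly along the lines of the alternative you sketch at the end (a positive-type basis of $\mathbb{M}_{n\times n}(\R)$ such as $\{e_ie_i^\top\}_i\cup\{(e_i+e_j)^{\otimes 2}\}_{i<j}\cup\{e_ie_j^\top-e_je_i^\top\}_{i<j}$, on which $[\X]^\epsilon(\phi)$ reduces to $[X^i]^\epsilon$, $[X^i+X^j]^\epsilon$ and $0$ respectively). Your main argument instead verifies Definition \ref{def CHICOV} from scratch; what it buys is the explicit identification $\widetilde{[\X]}_t=\sum_{ij}[X^i,X^j]_t\,(e_i\otimes e_j)$ and independence from Proposition \ref{pr FinDim}, at the cost of redoing the H1 estimate that the proposition already packages. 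One small point to tighten in H2(i): item 1 of the remark after Definition \ref{def cov} upgrades convergence to ucp only for the quadratic variation $[X,X]^\epsilon$ of a single process; for a mutual bracket $[X^i,X^j]^\epsilon$ with $i\neq j$ you should first note that $[X^i+X^j]$ exists (by bilinearity of $[\cdot,\cdot]^\epsilon$ and the assumed convergences) and then polarize, $[X^i,X^j]^\epsilon=\tfrac12\bigl([X^i+X^j]^\epsilon-[X^i]^\epsilon-[X^j]^\epsilon\bigr)$, to transfer the ucp upgrade from the three increasing limits to the mixed bracket. Since you already invoke exactly this polarization for the bounded-variation claim in H2(ii), this is a matter of ordering the steps rather than a gap.
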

%
%
%
\section{Calculations related to  window processes}		\label{sec: evaluation}
In this section we consider $X$ and $Y$ as real continuous processes as usual
prolonged by continuity and $X(\cdot)$ and $Y(\cdot)$ their associated window processes. 
We set $B=C([-\tau,0])$. We will proceed to the evaluation 
of some $\chi$-covariations 
(resp. $\chi$-quadratic variations) 
for window processes $X(\cdot)$ and $Y(\cdot)$ 
(resp. for process $X(\cdot)$) 
with values in $B=C([-\tau,0])$.
We start with some examples of $\chi$-covariation calculated directly through the definition. 
\begin{prop}		\label{prop ZQVHC} 
Let $X$ and $Y$ be two real valued processes with H\"older
continuous paths of parameters $\gamma$ and $\delta$ 
such that $\gamma+\delta>1$. 
Then $(X(\cdot),Y(\cdot))$ admits a zero scalar and tensor covariation.
In particular $(X(\cdot), Y(\cdot))$ admit a 
zero global covariation.
\end{prop}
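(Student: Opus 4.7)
The plan is to show that the scalar covariation $[X(\cdot),Y(\cdot)]^{\R}$ vanishes, and then invoke the results already established in the paper to deduce the tensor and global covariations.

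First I would exploit the structure of window processes to rewrite the norm appearing in \eqref{eq DFNREAL}: since $X_s(u) = X_{s+u}$ for $u \in [-\tau,0]$,
\[
\|X_{s+\epsilon}(\cdot) - X_s(\cdot)\|_{C([-\tau,0])} = \sup_{u \in [-\tau,0]} |X_{s+\epsilon+u} - X_{s+u}|.
\]
By the Hölder continuity hypothesis on $X$ (with parameter $\gamma$) and on $Y$ (with parameter $\delta$) applied to the continuous extensions on $[-\tau-1, T+1]$, there exist almost surely finite random Hölder constants $C_X(\omega)$ and $C_Y(\omega)$ such that
\[
\|X_{s+\epsilon}(\cdot) - X_s(\cdot)\|_\infty \le C_X\,\epsilon^{\gamma}, \qquad \|Y_{s+\epsilon}(\cdot) - Y_s(\cdot)\|_\infty \le C_Y\,\epsilon^{\delta},
\]
uniformly in $s \in [0,T]$ and for $\epsilon$ small enough.

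Plugging these bounds into \eqref{eq DFNREAL}, we obtain pathwise
\[
[X(\cdot),Y(\cdot)]^{\R,\epsilon}_t \;\le\; \int_0^t \frac{C_X\, \epsilon^{\gamma} \cdot C_Y\, \epsilon^{\delta}}{\epsilon}\,ds \;\le\; C_X C_Y\, T\, \epsilon^{\gamma+\delta-1},
\]
which, since $\gamma + \delta > 1$, tends to $0$ almost surely, uniformly in $t \in [0,T]$. In particular the convergence holds ucp, so the scalar covariation exists and satisfies $[X(\cdot),Y(\cdot)]^{\R} = 0$.

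Finally, the vanishing of the tensor covariation follows from Remark \ref{rem RE1} item 4, which states that a zero scalar covariation forces a zero tensor covariation. Once both the scalar and tensor covariations exist (and both vanish), Proposition \ref{pr TGH78} gives the existence of the global covariation together with the identity $\widetilde{[X(\cdot),Y(\cdot)]} = [X(\cdot),Y(\cdot)]^{\otimes} = 0$. There is no real obstacle here beyond making the Hölder bound explicit; the argument is essentially a one-line estimate, and the crucial point is simply that $\gamma + \delta - 1 > 0$ turns the $1/\epsilon$ normalization into a positive power of $\epsilon$.
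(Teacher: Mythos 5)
Your proof is correct and follows essentially the same route as the paper: bound the scalar covariation integrand pathwise by the H\"older constants to get a factor $\epsilon^{\gamma+\delta-1}\to 0$, then invoke Remark \ref{rem RE1} item 4 for the tensor covariation and Proposition \ref{pr TGH78} for the global covariation. No issues.
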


\begin{proof} \
By Remark \ref{rem RE1} item 4 and Proposition \ref{pr TGH78} we only need to show that $(X(\cdot),Y(\cdot))$ admit a zero scalar covariation, i.e. 
the convergence to zero in probability of following quantity.
\small{
\begin{equation}		\label{eq ZGQV2}
\frac{1}{\epsilon}  \int_{0}^{T}\|X_{s+\epsilon}(\cdot)-X_{s}(\cdot)\|_{B}\|Y_{s+\epsilon}(\cdot)-Y_{s}(\cdot)\|_{B}\,ds=
\frac{1}{\epsilon}\int_{0}^{T}\sup_{u\in[-\tau,0]}\left|X_{s+u+\epsilon}-X_{s+u}\right|\sup_{v\in[-\tau,0]}\left|Y_{s+v+\epsilon}-Y_{s+v}\right|ds\; .
\end{equation}}
Since $X$ (resp. $Y$) is a.s. $\gamma$-H\"older continuous (resp. $\delta$-H\"older continuous), 
there is a non-negative finite random variable $Z$ such that the right-hand side of \eqref{eq ZGQV2} is bounded by a sequence of random variables $Z(\epsilon)$ defined by 
$Z(\epsilon):=\epsilon^{\gamma+\delta-1}\,Z\,T$. This implies that \eqref{eq ZGQV2} 
converges to zero a.s. for $\gamma+\delta>1$. 
\end{proof}
\begin{rem} 
As a consequence of previous proposition every window process $X(\cdot)$ associated with a continuous process with 
H\"older continuous paths of parameter $\gamma>1/2$ admits zero real, tensor and global quadratic variation.
\end{rem}
\begin{rem}
Let $B^{H}$ (resp. $B^{H,K}$) be a real fractional Brownian motion with parameters $H\in ]0,1[$ (resp. real 
bifractional Brownian motion with parameters $H\in ]0,1[$, $K\in ]0,1]$), 
see \cite{rtudor} and \cite{hv} for elementary facts about the bifractional Brownian motion. 
As immediate consequences
 of Proposition \ref{prop ZQVHC} we obtain the following results. 
1) The fractional window Brownian motion $B^{H}(\cdot)$ with $H>1/2$ admits a zero scalar, tensor and global quadratic variation. 
2) The bifractional window Brownian motion $B^{H,K}(\cdot)$ with $KH>1/2$ admits a zero scalar, tensor and global quadratic variation.
3) We recall that the paths of a Brownian motion $W$ are a priori only a.s. H\"older
continuous of parameter $\gamma < 1/2$ so that we can not use Proposition \ref{prop ZQVHC}.
\end{rem}
Propositions \ref{prop WBMNS} and \ref{pr WBMNOQV} show that the
 stochastic calculus developed by \cite{dpz}, 
\cite{dincuvisi} and \cite{mp} cannot be applied for $\X$ being
 a window Brownian motion $W(\cdot)$.
\begin{dfn}
Let $B$ be a Banach space and $\X$ be a $B$-valued stochastic process. 
We say that $\X$ is a \textbf{Pettis semimartingale} if, 
for every $\phi \in B^{\ast}$, $\langle \phi, \X_{t}\rangle$ is 
a real semimartingale.
\end{dfn}
We remark that if $\X$ is a $B$-valued semimartingale in the sense of Section 1.17, \cite{mp}, then it is also a Pettis semimartingale.
%
\begin{prop} \label{prop WBMNS}
The $C([-\tau,0])$-valued window Brownian $W(\cdot)$ motion is not a Pettis semimartingale.
\end{prop}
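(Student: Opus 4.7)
The plan is to test the Pettis semimartingale property against a well-chosen $\phi\in C([-\tau,0])^{\ast}=\mathcal{M}([-\tau,0])$, namely $\phi=\delta_{-\tau}$. Using the extension convention $W_{s}=W_{0}=0$ for $s<0$, this yields the one-dimensional test process $Y_{t}:=\langle\phi,W_{t}(\cdot)\rangle = W_{t-\tau}$, which is null on $[0,\tau]$ and a delayed Brownian motion on $[\tau,T]$; without loss of generality I assume $T>\tau$. Arguing by contradiction, suppose $Y$ is a real $\mathbb{F}$-semimartingale with canonical decomposition $Y=M+A$, where $M$ is a continuous local martingale and $A$ is a continuous adapted process of locally bounded variation. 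By a standard localization, I may further assume that $M$ is a bounded true martingale and that the total variation $V_{T}$ of $A$ is bounded by a deterministic constant.

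The key point is that $Y_{t}=W_{t-\tau}$ is $\mathcal{F}_{t-\tau}$-measurable (since $\mathbb{F}$ contains the natural filtration of $W$). Hence for any $s\in[t-\tau,t]$ the increment $Y_{t}-Y_{s}$ is already $\mathcal{F}_{s}$-measurable. Fix a partition $\tau=t_{0}<t_{1}<\cdots<t_{N}=T$ of $[\tau,T]$ with mesh at most $\tau$. Conditioning on $\mathcal{F}_{t_{i}}$ and using the martingale property of $M$ gives
\[
Y_{t_{i+1}}-Y_{t_{i}} \;=\; E[Y_{t_{i+1}}-Y_{t_{i}}\mid \mathcal{F}_{t_{i}}] \;=\; E[A_{t_{i+1}}-A_{t_{i}}\mid \mathcal{F}_{t_{i}}],
\]
and the conditional Jensen inequality implies $(Y_{t_{i+1}}-Y_{t_{i}})^{2}\leq E[(A_{t_{i+1}}-A_{t_{i}})^{2}\mid\mathcal{F}_{t_{i}}]$. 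Summing in $i$, taking expectations and using the tower property yields
\[
E\Big[\sum_{i}(Y_{t_{i+1}}-Y_{t_{i}})^{2}\Big] \;\leq\; E\Big[\sum_{i}(A_{t_{i+1}}-A_{t_{i}})^{2}\Big].
\]

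I conclude by sending the mesh to zero. The left-hand side is the $L^{1}$ version of the quadratic variation of a Brownian motion on $[0,T-\tau]$, hence converges to $T-\tau>0$. The right-hand side is bounded above by $E[V_{T}\cdot\sup_{i}|A_{t_{i+1}}-A_{t_{i}}|]$, which vanishes by dominated convergence since $V_{T}$ is bounded and $A$ is continuous. This contradiction shows that $Y$ cannot be an $\mathbb{F}$-semimartingale, so $W(\cdot)$ is not a Pettis semimartingale. The main technical point requiring care is the localization step: the conditional identity $E[M_{t_{i+1}}-M_{t_{i}}\mid\mathcal{F}_{t_{i}}]=0$ must be carried over to the stopped process. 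This is handled by choosing a single localizing sequence $\rho_{n}\uparrow\infty$ that simultaneously controls $|M|$ and the total variation of $A$, and by running the above estimate on the event $\{\rho_{n}\geq T\}$ before letting $n\to\infty$ so that $P(\rho_{n}\geq T)$ tends to $1$.
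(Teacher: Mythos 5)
Your quadratic-variation estimate is the classical, correct argument showing that the delayed process $Y_t=W_{t-\tau}$ is not a semimartingale with respect to a filtration $\mathbb{F}$ \emph{containing the natural filtration of $W$}: increments over intervals of mesh at most $\tau$ are already measurable at the left endpoint, so the martingale part contributes nothing to them, and the (nonzero) quadratic variation must then be absorbed by the bounded variation part, which is impossible. The localization issue you flag is real but standard and can be handled as you indicate.

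The gap lies in the choice of the test functional, and it is not cosmetic. The notion of ``real semimartingale'' in the definition of a Pettis semimartingale is filtration-free, and the paper's proof of Proposition \ref{prop WBMNS} accordingly establishes that $\langle\mu,W_t(\cdot)\rangle$ fails to be a semimartingale with respect to \emph{any} filtration. With $\phi=\delta_{-\tau}$ this is false: $Y_t=W_{(t-\tau)^+}$ is a continuous martingale, hence a semimartingale, for the delayed filtration $\mathcal{G}_t=\mathcal{F}_{(t-\tau)^+}$, which is exactly its own natural filtration. Your argument excludes only filtrations at least as rich as that of $W$ and says nothing about smaller ones, and for this particular $Y$ a smaller filtration does make it a semimartingale. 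This is precisely why the paper tests against $\mu=\delta_{0}+\delta_{-\tau}$: the resulting process $X_t=W_t+W_{t-\tau}$ generates the same filtration as $W$, so Stricker's theorem forces any putative semimartingale property down to the natural filtration of $W$, and there the uniqueness of the weak Dirichlet decomposition (with $W_{\cdot-\tau}$ as the martingale-orthogonal part) would force $W_{\cdot-\tau}$ to have bounded variation, a contradiction. Your estimate could in fact be substituted for that last step --- once Stricker has pinned the filtration to that of $W$, observe that $W_{\cdot-\tau}=X-W$ would itself be a semimartingale for that filtration and apply your computation to it --- but as written, with $\phi=\delta_{-\tau}$ alone, the proposal does not prove the proposition.
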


\begin{proof} \
It is enough to show that the existence of      an element $\mu$ in $B^{\ast}=\mathcal{M}([-\tau,0])$ such that 
$\langle \mu, W_{t}(\cdot)\rangle=\int_{[-\tau,0]}W_{t}(x)\mu(dx)$ is not a semimartingale with respect to any filtration. We will
 proceed by contradiction: we suppose that $W(\cdot)$ is a 
Pettis semimartingale, so that in particular if we take $\mu=\delta_{0}+
\delta_{-\tau}$, the process $\langle \delta_{0}+\delta_{-\tau}, 
W_{t}(\cdot)\rangle= W_{t}+W_{t-\tau}:=X_{t}$ is a 
semimartingale with respect
 to some filtration $(\mathcal{G}_{t})$. 
Let $(\mathcal{F}_{t})$ be the natural filtration generated by the real Brownian motion $W$.
Now $W_{t}+W_{t-\tau}$ is $(\mathcal{F}_{t})$-adapted, so by Stricker's theorem (see 
Theorem 4, page. 53 in \cite{prot2}), $X$ is a semimartingale with respect to filtration $(\mathcal{F}_{t})$. 
We recall that a  $(\mathcal{F}_{t})$-\textbf{weak Dirichlet} is
the sum of a local martingale $M$ and a process $A$ which is adapted and
 $[A,N]=0$ for any continuous $(\mathcal{F}_{t})$-local martingale $N$; 
$A$ is called the $(\shf_{t})$-martingale orthogonal process. 
On the other hand $(W_{t-\tau})_{t\geq \tau}$ is a strongly predictable process with respect to $(\mathcal{F}_{t})$, see Definition 3.5 in \cite{crwdp}. 
By Proposition 4.11 in \cite{crnsm2}, it follows that 
$(W_{t-\tau})_{t\geq \tau}$  is an $(\shf_{t})$-martingale orthogonal process.
Since $W$ is an $(\mathcal{F}_{t})$-martingale, the process $X_{t}=W_{t}+W_{t-\tau}$ is an $(\mathcal{F}_{t})$-weak Dirichlet process. 
By uniqueness of the decomposition for  $(\mathcal{F}_{t})$-weak
 Dirichlet processes, 
$(W_{t-\tau})_{t \ge \tau}$ has to be a bounded variation process. 
This generates a contradiction because $(W_{t-\tau})_{t\geq \tau}$ is not a
 zero quadratic variation process. 
In conclusion $\langle \mu, W_{t}(\cdot)\rangle, t \in [0,T]$ is 
not a semimartingale.
  \end{proof}

\begin{rem} \label{RWBMNS}
\begin{enumerate}
\item Process $X$ defined by $X_{t}=W_{t}+W_{t-\tau}$ is an example of
 $(\mathcal{F}_{t})$-weak Dirichlet process with finite quadratic variation which is not an $(\mathcal{F}_{t})$-Dirichlet process.
\item Let $X$ be a semimartingale  and $\mu$ be a signed Borel measure on $[-T,0]$. We define the real valued process $X^\mu$ by $X^\mu_t:= \int_{[-T,0]} X_{t+x} d\mu(x)$. 
If $ \mu(dx) = \gamma \delta_0(dx) + g(x)dx$, $\gamma \in \R$ and
$g$ being a bounded Borel function on $[-T,0]$, then $X^\mu$
is a semimartingale such that $X^\mu_t=\gamma X_t +\int_0^t 
\tilde g(y-t)dX_y$, $ t \in [0,T],$ and $\tilde g(x) = - \int_x^0 g(y)dy,
x \in [-T,0].$
\end{enumerate}
\end{rem}
\begin{prop}		\label{pr WBMNOQV}
If $W$ is a classical Brownian motion, then $W(\cdot)$ does not admit a scalar quadratic variation. In particular $W(\cdot)$ does not admit a global quadratic variation.
\end{prop}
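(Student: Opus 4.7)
The plan is to show that for any $t\in (0,T]$ with $t>\tau$ (or, if $T=\tau$, for any $t<T$ close enough to $T$), the quantity
\[
[W(\cdot)]^{\mathbb{R},\epsilon}_t = \frac{1}{\epsilon}\int_0^t \sup_{u\in[-\tau,0]}\bigl(W_{s+u+\epsilon}-W_{s+u}\bigr)^2\,ds
\]
diverges to $+\infty$ in probability as $\epsilon\to 0$. This immediately precludes any ucp limit to a continuous process, hence the scalar quadratic variation cannot exist; and because the norm $\|a\otimes a\|_{(B\hat{\otimes}_\pi B)^{\ast\ast}}$ coincides with the projective norm $\|a\|_{B}^2$, Condition \textbf{H1} for the global quadratic variation is exactly the a.s.\ boundedness along a subsequence of the same integral, which therefore also fails.

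Write $Z_s(\epsilon):=\sup_{u\in[-\tau,0]}(W_{s+u+\epsilon}-W_{s+u})^2$. For $s\in[\tau,t]$, I restrict the supremum to the discrete grid $u=-k\epsilon$, $k=0,1,\dots,n_\epsilon$, where $n_\epsilon:=\lfloor\tau/\epsilon\rfloor$. The corresponding increments $V_k:=W_{s-k\epsilon+\epsilon}-W_{s-k\epsilon}$ lie on pairwise disjoint intervals of length $\epsilon$, so they are i.i.d.\ $\mathcal{N}(0,\epsilon)$. This yields the pointwise stochastic lower bound
\[
Z_s(\epsilon)\;\geq\;\epsilon\,\max_{0\leq k\leq n_\epsilon}\bigl(V_k/\sqrt{\epsilon}\bigr)^{2},
\]
a maximum of $n_\epsilon+1$ i.i.d.\ standard $\chi^2_1$ variables, whose expected value is well-known to be asymptotic to $2\log n_\epsilon\sim 2\log(1/\epsilon)$.

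Fix any $c\in(0,2)$ and set $y:=\sqrt{c\log(1/\epsilon)}$. Using the Mills-type bound $P(|\mathcal{N}(0,1)|>y)\geq (y\sqrt{2\pi})^{-1}e^{-y^{2}/2}$, a direct computation gives
\[
P\bigl(Z_s(\epsilon)<c\,\epsilon\log(1/\epsilon)\bigr)\;\leq\;(1-p_\epsilon)^{n_\epsilon+1},\qquad n_\epsilon\, p_\epsilon\;\geq\;\frac{\tau\,\epsilon^{c/2-1}}{\sqrt{2\pi c\log(1/\epsilon)}}\;\longrightarrow\;\infty,
\]
so $P(Z_s(\epsilon)\geq c\,\epsilon\log(1/\epsilon))\to 1$, uniformly in $s\in[\tau,t]$. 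By Fubini and bounded convergence, $Q(\epsilon):=\int_\tau^t\mathbf{1}_{\{Z_s(\epsilon)\geq c\epsilon\log(1/\epsilon)\}}\,ds$ converges to $t-\tau>0$ in $L^1$, hence in probability.

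Combining, on the event $\{Q(\epsilon)\geq (t-\tau)/2\}$ (which has probability tending to $1$),
\[
[W(\cdot)]^{\mathbb{R},\epsilon}_t\;\geq\;\frac{1}{\epsilon}\int_\tau^t Z_s(\epsilon)\,\mathbf{1}_{\{Z_s(\epsilon)\geq c\epsilon\log(1/\epsilon)\}}\,ds\;\geq\;c\log(1/\epsilon)\cdot Q(\epsilon)\;\xrightarrow[\epsilon\to 0]{\mathbb{P}}\;+\infty.
\]
This contradicts ucp convergence of $[W(\cdot)]^{\mathbb{R},\epsilon}$ to a finite continuous process, so the scalar quadratic variation does not exist. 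The global one is then ruled out as follows: for any subsequence $\epsilon_{n_k}\downarrow 0$, the in-probability divergence above admits a further a.s.\ diverging sub-subsequence, hence $\sup_k[W(\cdot)]^{\mathbb{R},\epsilon_{n_k}}_T=\infty$ a.s., violating \textbf{H1} for $\chi=(B\hat{\otimes}_\pi B)^{\ast}$. The main technical obstacle is the uniform (in $s$) Gaussian extreme-value estimate in the central step, which is however a routine consequence of the Mills-ratio lower bound.
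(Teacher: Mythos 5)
Your proof is correct, but it reaches the $\log(1/\epsilon)$ divergence by a genuinely different route than the paper. The paper's proof is pathwise and essentially a one-liner modulo a citation: for $u\ge T/2$ it bounds $\|W_{u+\epsilon}(\cdot)-W_u(\cdot)\|_B^2$ from below by $\sup_{x\in[0,T/2-\epsilon]}|W_{x+\epsilon}-W_x|^2$, a quantity independent of $u$, then rescales and invokes the modulus-of-continuity theorem (Theorem 1.1 in \cite{ChenCs}) asserting $\sup_{x\in[0,1-\epsilon]}|B_{x+\epsilon}-B_x|/\sqrt{2\epsilon\ln(1/\epsilon)}\to 1$ a.s.; this yields $[W(\cdot)]^{\R,\epsilon}_T\ge T\,A^2(\tilde\epsilon)\ln(1/\tilde\epsilon)$ with $A\to 1$ a.s., so the divergence is almost sure and carries the sharp constant. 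You instead work at each fixed $s$, discretize the window into $\lfloor\tau/\epsilon\rfloor$ disjoint, hence independent, increments, and run an elementary extreme-value computation with the Mills-ratio bound to get $P(Z_s(\epsilon)\ge c\,\epsilon\log(1/\epsilon))\to 1$ for any $c<2$, then integrate via Fubini and a first-moment argument. This buys self-containedness (no appeal to the Cs\"org\H{o}--R\'ev\'esz-type literature) at the price of obtaining only divergence in probability with a non-sharp constant --- which is all that is needed, and your Borel--Cantelli extraction of an a.s.\ diverging sub-subsequence correctly converts this into a violation of Condition \textbf{H1} for $\chi=(B\hat{\otimes}_{\pi}B)^{\ast}$, using (as the paper does implicitly) that $\|(a\otimes a)\|_{(B\hat{\otimes}_{\pi}B)^{\ast\ast}}=\pi(a\otimes a)=\|a\|_B^2$.

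One small repair: restricting to $s\in[\tau,t]$ forces $t>\tau$, so your stated range collapses when $\tau=T$ (then $[\tau,t]$ is empty for every $t\le T$, and the parenthetical ``$t<T$ close enough to $T$'' does not fix this). The argument survives verbatim if you take the grid $u=-k\epsilon$ only for $k\le\lfloor(s\wedge\tau)/\epsilon\rfloor$ and integrate over $s\in[s_0,t]$ for any fixed $0<s_0<t$, since $\lfloor s_0/\epsilon\rfloor\,p_\epsilon\to\infty$ just as well. With that adjustment the proof covers all $0<\tau\le T$.
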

\begin{proof}
We can prove that 
\begin{equation}			\label{eq QS32}
\int_{0}^{T} \frac{1}{\epsilon} \left\| W_{u+\epsilon}(\cdot)-W_{u}(\cdot) \right\|^{2}_{B}du \geq T \, A^{2}(\tilde{\epsilon}) \ln (1/\tilde{\epsilon}),  
\quad \textrm{where} \quad \tilde{\epsilon}=\frac{2\epsilon}{T}
\end{equation}
and $(A(\epsilon))$ is a family of non negative r.v. such that $\lim _{\epsilon\rightarrow 0}A(\epsilon)=1$ a.s. In fact the left-hand side of 
\eqref{eq QS32} gives
\[
\begin{split}
\int_{0}^{T}  \frac{1}{\epsilon} \sup_{x\in [0,u]}| W_{x+\epsilon}-W_{x} |^{2}du
&
\geq 
\int_{T/2}^{T}  \frac{1}{\epsilon} \sup_{x\in [0,u]}| W_{x+\epsilon}-W_{x} |^{2}du
\geq 
\int_{T/2}^{T}  \frac{1}{\epsilon} \sup_{x\in [0,T/2-\epsilon]}| W_{x+\epsilon}-W_{x} |^{2}du
\\
&
=\frac{T}{2\epsilon}  \sup_{x\in [0,T/2-\epsilon]}| W_{x+\epsilon}-W_{x} |^{2} \; .
\end{split}
\]
Clearly we have $W_{t}=\sqrt{\frac{T}{2}} B_{\frac{2t}{T}}$ where $B$
 is another standard Brownian motion. Previous expression gives 
\[
\begin{split}
\frac{T^{2}}{4\epsilon} \sup_{x\in [0,T/2-\epsilon]}| B_{(x+\epsilon)\frac{2}{T}}-B_{\frac{2x}{T}}|^{2}
=
\frac{T^{2}}{4\epsilon} \sup_{y \in [0,1- \frac{2\epsilon}{T}]} | B_{y+\frac{2\epsilon}{T}}-B_{y}|^{2}
\end{split}
\]
We choose $\tilde{\epsilon}=\frac{2 \epsilon}{T} $. Previous expression gives 
$
T \ln(1/\tilde{\epsilon}) A^{2}(\tilde{\epsilon})
$
where 
$$
A(\epsilon)=\left( \frac{\sup_{x\in [0,1-\epsilon]}| B_{x+\epsilon}-B_{x} | }{\sqrt{2\, \epsilon \, \ln(1/\epsilon)}} \right) \; .
$$
According to Theorem 1.1 in \cite{ChenCs}, $\lim_{\epsilon\rightarrow 0} A(\epsilon)=1$ a.s. and the result is established.
\end{proof}
%
%

%
%
Below we will see that $W(\cdot)$, even if it does not admit a global quadratic variation, it admits a $\chi$-quadratic variation for several Chi-subspaces $\chi$. 
More generally we can state a significant existence result of  $\chi$-covariation for finite quadratic variation processes with the help 
of Corollaries \ref{cor SGS} and \ref{corollary}. 
We remind that $\shd_{a}([-\tau,0])$ and $\shd_{a,b}([-\tau,0]^{2})$ were
 defined at \eqref{eq-def Di} and \eqref{eq-def Dij}.
\begin{prop}   \label{pr QV123}
Let $X$ and $Y$ be two real continuous processes with finite quadratic variation and $0<\tau\leq T$. Let $a,b$ two given points in $[-\tau,0]$. 
The following properties hold true. 
\begin{enumerate}
\item $(X(\cdot),Y(\cdot))$ admits a zero $\chi$-covariation, where 
$\chi=L^{2}([-\tau,0]^{2})$.
\item $(X(\cdot),Y(\cdot))$ admits a zero $\chi$-covariation where
 $\chi$ equals $L^{2}([-\tau,0])\hat{\otimes}_{h}  \shd_{a}([-\tau,0])$
or  $\shd_{a}([-\tau,0])\hat{\otimes}_{h}   L^{2}([-\tau,0])$.
\end{enumerate}
If moreover the covariation $[X_{\cdot+a},Y_{\cdot+b}]$ 
exists, the following statement is valid.
\begin{enumerate}
\item [3.]
 $(X(\cdot),Y(\cdot))$ admits a $\chi$-covariation, where $\chi=\shd_{a,b}([-\tau,0]^{2})$, and it equals 
\begin{equation}		\label{eq QV Dij}
[X(\cdot), Y(\cdot)] (\mu)=\mu(\{a,b\})[X_{\cdot+a},Y_{\cdot+b}], \quad \forall \mu \in \chi.
\end{equation}
\end{enumerate}
\end{prop}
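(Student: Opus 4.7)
The plan is to verify in each case the hypotheses of Corollary \ref{cor SGS}, namely the existence of a subset $\mathcal{S} \subset \chi$ with $\overline{\mathrm{Span}(\mathcal{S})} = \chi$, the uniform boundedness hypothesis \textbf{H1}, and the pointwise ucp convergence on $\mathcal{S}$.

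For Part 3, $\chi = \mathcal{D}_{a,b}$ is one-dimensional spanned by $\delta_a \otimes \delta_b$, so the $\chi$-covariation reduces to a single scalar sequence:
\[
[X(\cdot), Y(\cdot)]^{\epsilon}(\delta_a \otimes \delta_b)(t) = \int_0^t \frac{(X_{s+\epsilon+a} - X_{s+a})(Y_{s+\epsilon+b} - Y_{s+b})}{\epsilon}\, ds = [X_{\cdot+a}, Y_{\cdot+b}]^{\epsilon}(t),
\]
which converges ucp to $[X_{\cdot+a}, Y_{\cdot+b}]_t$ by assumption, whence \eqref{eq QV Dij} by linearity in $\mu$. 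Hypothesis \textbf{H1} follows from Remark \ref{rem CSH1} since the $\chi^*$-norm of the elementary tensor $(X_{s+\epsilon}(\cdot)-X_s(\cdot)) \otimes (Y_{s+\epsilon}(\cdot)-Y_s(\cdot))$ reduces to the absolute value of the product of its evaluations at $a$ and $b$.

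For Parts 1 and 2, I would take $\mathcal{S} = \{\alpha \otimes \beta : \alpha, \beta \in C^1([-\tau,0])\}$ in Part 1, respectively $\mathcal{S} = \{\alpha \otimes \delta_a : \alpha \in C^1([-\tau,0])\}$ in Part 2; density of $\mathrm{Span}(\mathcal{S})$ in $\chi$ follows by the standard density of algebraic tensor products of dense subspaces in a Hilbert tensor product. For \textbf{H1}, I would use Cauchy--Schwarz to bound
\[
\|(X_{s+\epsilon}(\cdot)-X_s(\cdot)) \otimes (Y_{s+\epsilon}(\cdot)-Y_s(\cdot))\|_{\chi^*} \le \|X_{s+\epsilon}(\cdot)-X_s(\cdot)\|_{L^2([-\tau,0])} \cdot \theta_{s,\epsilon},
\]
with $\theta_{s,\epsilon} = \|Y_{s+\epsilon}(\cdot)-Y_s(\cdot)\|_{L^2([-\tau,0])}$ in Part 1 and $\theta_{s,\epsilon} = |Y_{s+\epsilon+a} - Y_{s+a}|$ in Part 2. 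A further Cauchy--Schwarz in $s$ together with Fubini reduces everything to the boundedness of $\epsilon^{-1}\int_{-\tau}^T (X_{u+\epsilon}-X_u)^2\, du$, which is controlled by $[X]_T$ up to boundary corrections vanishing via the prolongation convention, and analogously for $Y$.

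The key step for Parts 1 and 2 is computing the pointwise limit on an elementary tensor $\phi = \alpha \otimes \beta \in \mathcal{S}$. Setting $F_s := \int_{-\tau}^0 \alpha(x) X_{s+x}\, dx$ and $G_s := \int_{-\tau}^0 \beta(y) Y_{s+y}\, dy$ (or $G_s := Y_{s+a}$ in Part 2), Fubini yields
\[
[X(\cdot), Y(\cdot)]^{\epsilon}(\alpha \otimes \beta)(t) = [F, G]^{\epsilon}(t).
\]
An integration by parts using $\alpha \in C^1$ shows that $F$ is of class $C^1$ in $s$, hence has bounded variation on $[0,T]$ a.s.; combined with the continuity of $G$, the classical real-valued fact $[F,G] = 0$ gives $[F,G]^{\epsilon} \to 0$ ucp, so $\mathcal{T} \equiv 0$ on $\mathcal{S}$ and Corollary \ref{cor SGS} concludes. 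The main technical nuisance is the careful accounting of the extension of $X, Y$ as constants outside $[0,T]$ when verifying \textbf{H1}; this is harmless since $X_{u+\epsilon} - X_u = 0$ on $[-\tau, -\epsilon]$ and vanishes uniformly on $[-\epsilon, 0]$ by continuity of $X$ at $0$, and analogously for $Y$.
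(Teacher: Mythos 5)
Your proposal follows essentially the same route as the paper's proof: verify \textbf{H1} by Cauchy--Schwarz in the $\chi^{\ast}$-norm and then in $s$, reducing everything to the convergence in probability of $\epsilon^{-1}\int_{0}^{T}(X_{u+\epsilon}-X_{u})^{2}du$ to $[X]_{T}$ (with harmless boundary terms from the prolongation convention), and then compute the limit on elementary tensors of smooth functions. For Parts 1 and 2 your packaging of the key step --- $F_{s}=\int_{-\tau}^{0}\alpha(x)X_{s+x}\,dx$ is $C^{1}$ in $s$, hence of bounded variation, hence $[F,G]=0$ against the continuous $G$ --- is a clean reformulation of the paper's explicit integration-by-parts estimate on $\gamma_{j}(s,\epsilon)$, which yields the same uniform-in-$t$ bound of order $\epsilon$; this gives genuine ucp convergence on $\mathcal{S}$, so Corollary \ref{cor SGS} applies there without trouble. (The paper instead routes everything through Corollary \ref{corollary}, decomposing each $e_{i}\otimes e_{j}$ as a difference of positive definite elements, but for Parts 1 and 2 this extra care is not needed given the uniform estimate.)

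There is, however, one concrete gap in Part 3. You invoke Corollary \ref{cor SGS}, whose hypothesis \textbf{H2'} requires \emph{ucp} convergence of $[X(\cdot),Y(\cdot)]^{\epsilon}(\delta_{a}\otimes\delta_{b})=[X_{\cdot+a},Y_{\cdot+b}]^{\epsilon}$, and you assert this holds ``by assumption''. But the standing hypothesis is only that the covariation $[X_{\cdot+a},Y_{\cdot+b}]$ exists, which by Definition \ref{def cov} means convergence in probability for each fixed $t$ (plus a continuous version), not ucp convergence. The paper closes this gap by using Corollary \ref{corollary}: it writes $\delta_{a}\otimes\delta_{b}$ as a difference of the positive definite elements $(\delta_{a}+\delta_{b})\otimes^{2}$ and $\delta_{a}\otimes^{2}+\delta_{b}\otimes^{2}$, for which the approximating processes are increasing, and then Lemma 3.1 of \cite{rv4} upgrades pointwise convergence in probability to ucp. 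Alternatively, a polarization argument works, since $X_{\cdot+a}$ and $Y_{\cdot+b}$ are themselves finite quadratic variation processes. Either fix is short, but as written the ucp claim is unjustified. Relatedly, your verification of \textbf{H1} in Part 3 stops at identifying the $\chi^{\ast}$-norm with $|X_{s+a+\epsilon}-X_{s+a}|\,|Y_{s+b+\epsilon}-Y_{s+b}|$; one still needs the Cauchy--Schwarz bound by $\sqrt{[X]^{\epsilon}_{T}[Y]^{\epsilon}_{T}}$ (the paper's $B(\epsilon)$), which is the same step you perform in Parts 1 and 2 and should be stated here too.
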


\begin{proof} \ The proof will be similar in all the three cases. 
As mentioned in Example \ref{ex 4.5}, all  the  involved sets 
$\chi$ are  Chi-subspaces, which moreover are separable.\\
Let $\{e_{j}\}_{j\in \mathbb{N}}$ be a topological
  basis for $L^{2}([-\tau,0])$; $\{\delta_{a}\}$ is clearly a basis for $\mathcal{D}_{a}([-\tau,0])$. 
Then $\{ e_{i}\otimes e_{j}\}_{i,j\in \mathbb{N}}$ is a basis of $L^{2}([-\tau,0]^{2})$, 
$\{e_{j}\otimes\delta_{a} \}_{j\in \mathbb{N}}$ is a basis of 
$L^{2}([-\tau,0])\hat{\otimes}_{h}  \shd_{a}([-\tau,0])$ and $\{ \delta_{a}\otimes \delta_{b}\}$ is a basis of $\shd_{a,b}([-\tau,0]^{2})$. 
The results will follow using Corollary \ref{corollary}. 
To verify Condition {\bf H1} we consider 
\[
A(\epsilon):=
\frac{1}{\epsilon}\int_{0}^{T}
\sup_{\left\| \phi\right\|_{\chi}\leq 1}\left|  \prescript{}{\chi}{\langle} \phi,\left(X_{s+\epsilon}(\cdot)-X_{s}(\cdot)\right)\otimes \left(Y_{s+\epsilon}(\cdot)-Y_{s}(\cdot)\right) \rangle_{\chi^{\ast}} \right|\,ds
\]
for all the  Chi-subspaces mentioned above. 
In all the three situations we will show the existence of a family of random variables $\{ B(\epsilon)\}$ 
converging in probability to some random variable $B$, such that 
$A(\epsilon)\leq B(\epsilon)$ a.s. By Remark \ref{rem CSH1}.1 this will imply Assumption {\bf H1}.
\begin{enumerate}
\item [1.] Suppose $\chi=L^{2}([-\tau,0]^{2})$. 
By Cauchy-Schwarz inequality we have
\begin{displaymath}
\begin{split}
A(\epsilon) & 
\leq  \frac{1}{\epsilon} \int_{0}^{T}\sup_{\left\| \phi\right\|_{L^{2}([-\tau,0]^{2})}\leq 1}\left\|\phi \right\|^{2}_{L^{2}([-\tau,0]^{2})}\cdot \left\|
X_{s+\epsilon}(\cdot)-X_{s}(\cdot)\right\|_{L^{2}([-\tau,0])} \cdot \left\|
Y_{s+\epsilon}(\cdot)-Y_{s}(\cdot)\right\|_{L^{2}([-\tau,0])} ds  \\
&
\leq\frac{1}{\epsilon} \int_{0}^{T} \sqrt{\int_{0}^{s} \left(X_{u+\epsilon}-X_{u}\right)^{2}\,du\,}  \sqrt{\int_{0}^{s} \left(Y_{v+\epsilon}-Y_{v}\right)^{2}\,dv\,} ds\leq T\, B (\epsilon) \quad \mbox{where}
\end{split}
\end{displaymath}  
\be	\label{Beps}
B (\epsilon)=\sqrt{\int_{0}^{T} \frac{\left(X_{u+\epsilon}-X_{u}\right)^{2}}{\epsilon}\,du \, \int_{0}^{T} \frac{\left(Y_{v+\epsilon}-Y_{v}\right)^{2}}{\epsilon}\,dv}
\ee
which converges in probability to $\sqrt{[X]_{T}[Y]_{T}}$.
\item [2.]  
We proceed similarly for $\chi=L^{2}([-\tau,0])\hat{\otimes}_{h}  \shd_{a}([-\tau,0])$. 

We consider $\phi$ of the form $\phi=\tilde{\phi}\otimes \delta_{a}$, where $\tilde{\phi}$ 
is an element of $L^{2}([-\tau,0])$. We first observe
$$
\left\| \phi \right\|_{L^{2}([-\tau,0])\hat{\otimes}_{h}\mathcal{D}_{a}}=
\left\|\tilde{ \phi }\right\|_{L^{2}([-\tau,0])}\cdot \left\| \delta_{a} \right\|_{\mathcal{D}_{a}}=
\sqrt{ \int_{[-\tau,0]}\tilde{\phi }(s)^{2} \, ds}  \; .
$$
Then
\begin{displaymath}
\begin{split}
A (\epsilon)& 
=\frac{1}{\epsilon} \int_{0}^{T} \sup_{\left\| \phi \right\|_{L^{2}([-\tau,0])\hat{\otimes}_{h}\mathcal{D}_{a}}\leq 1}
\left|  \left(X_{s+\epsilon}(a)-X_{s}(a)\right)   \int_{[-\tau,0]}\left(Y_{s+\epsilon}(x)-Y_{s}(x)\right)\tilde{\phi}(x) \,dx  \right|    \,ds\leq\\
&
\leq \frac{1}{\epsilon}\int_{0}^{T} \sup_{\left\| \phi \right\|\leq 1}
  \left \{ \left(  \sqrt{ \left(X_{s+\epsilon}(a)-X_{s}(a)\right)^{2}}   \right)\cdot \right. \\
&
\hspace{4cm}
 \left. \cdot \left( \left\|\tilde{ \phi}\right\|_{L^{2}([-\tau,0])} \sqrt{\int_{[-\tau,0]}  \left(Y_{s+\epsilon}(x)-Y_{s}(x)\right)^{2} \,dx}  \right)  \right\} \, ds \leq\\
&
\leq \int_{0}^{T} \sqrt{ \frac{\left(X_{s+\epsilon}(a)-X_{s}(a)\right)^{2} }{\epsilon} }\sqrt{\int_{[-T,0]} \frac{ \left(Y_{s+\epsilon}(x)-Y_{s}(x)\right)^{2} }{\epsilon}\,dx}\;ds\leq \sqrt{T}B(\epsilon)
\end{split}
\end{displaymath}
where $B(\epsilon)$ is the same family of r.v. defined in \eqref{Beps}. 
The case $\shd_{a}([-\tau,0])\hat{\otimes}_{h}   L^{2}([-\tau,0])$
can be handled symmetrically.
%
\item [3.] The last case is $\chi=\shd_{a,b}([-\tau,0]^{2})$. 
A general element $\phi$ which belongs to $\chi$ admits a representation $\phi=\lambda \;\delta_{(a,b )}$, with 
norm equals to $\left\| \phi \right\|_{\mathcal{D}_{a,b}}=\vert\lambda\vert$. 
We have
\begin{equation}		\label{eq 5.4bis}
\begin{split}
A (\epsilon)& = \frac{1}{\epsilon} \int_{0}^{T}\sup_{\left\| \phi \right\|_{\mathcal{D}_{a,b}}\leq 1} \left| \lambda \left(X_{s+a+\epsilon}-X_{s+a}\right)
\left(Y_{s+b+\epsilon}-Y_{s+b }\right)\right|\,ds  \\ 
&
\leq \frac{1}{\epsilon}  \int_{0}^{T} \left| \left(X_{s+a+\epsilon}-X_{s+a }\right)
\left(Y_{s+b+\epsilon}-Y_{s+b }\right) \right| \,ds \;  ;
\end{split}
\end{equation}
using again Cauchy-Schwarz inequality, previous quantity is bounded by
\begin{equation}		
 \sqrt{ \int_{0}^{T}  \frac{\left(X_{s+a+\epsilon}-X_{s+a}\right)^{2} }{\epsilon}\,ds}   \sqrt{ \int_{0}^{T}\frac{\left(Y_{v+b+\epsilon}-Y_{v+b}\right)^{2}  }{\epsilon}\,dv} \leq
 B(\epsilon). 	\label{eq 5.4}
\end{equation}
\end{enumerate}
We verify now the Conditions {\bf H0''} and {\bf H2''}.  
\begin{enumerate}
\item[1.] 
A general element in $\{e_{i}\otimes e_{j}\}_{i,j\in \mathbb{N}}$ is difference 
of two positive definite elements in the set $\shs^{p}=\{ e_{i}\otimes^{2}, (e_{i}+e_{j})\otimes^{2} \}_{i,j\in \mathbb{N}}$. We also define 
$\shs=\{e_{i}\otimes e_{j}\}_{i,j\in \mathbb{N}}$. The fact that $Span({\cal S})=Span({\cal S}^{p})$ implies {\bf H0''}. To conclude we need to show the validity of Condition {\bf H2''}. 
For this we have to verify 
\begin{equation}		\label{eq 1}
[X(\cdot), Y(\cdot)]^{\epsilon}(e_{i}\otimes e_{j})(t) \xrightarrow[\epsilon\longrightarrow 0]{} 0
\end{equation}
in probability for any $i,j\in \mathbb{N}$. Clearly
we can suppose $\{e_{i}\}_{i\in \mathbb{N}}\in C^{1}([-\tau,0])$. 
We fix $\omega \in \Omega$, outside some null  set, fixed but omitted. We have 
\begin{equation}   
[X(\cdot), Y(\cdot)]^{\epsilon}(e_{i}\otimes e_{j})(t)= \int_{0}^{t}
 \frac{\gamma_{j}(s,\epsilon)  \; \gamma_{i}(s,\epsilon)}{\epsilon} ds  \quad\mbox{where}
\label{formula 1}
\end{equation}
\[
\gamma_{j}(s,\epsilon)=
\int_{(-\tau)\vee(-s)}^{0}e_{j}(y)\left( X_{s+y+\epsilon}-X_{s+y}\right)dy \quad \mbox{and}
\quad 
\gamma_{i}(s, \epsilon)=
\int_{(-\tau)\vee(-s)}^{0}e_{i}(x)\left( Y_{s+x+\epsilon}-Y_{s+x}\right)dx .
\]
Without restriction of generality, in the purpose not to overcharge notations,
 we can suppose from now on that $\tau=T$. 
For every $s\in [0,T]$, we have 
\begin{equation}		\label{eq ESTIGAMM}
\begin{split}
\left| \gamma_{j}(s,\epsilon) \right|
&=
\left| \int_{-s}^{0}\left( e_{j}(y-\epsilon)-e_{j}(y)\right)X_{s+y}dy  + \int_{0}^{\epsilon}e_{j}(y-\epsilon)X_{s+y}dy-\int_{-s}^{-s+\epsilon} e_{j}(y-\epsilon)X_{s+y}dy \right|
\\
&
\leq \epsilon \left( \int_{-T}^{0}|\dot{e_{j}}(y)| dy  +2\|e_{j}\|_{\infty} \right) \sup_{s\in[0,T]} \left| X_{s} \right|  \; .
\end{split}		
\end{equation}
For $t \in [0,T]$, this implies that 
\[
\begin{split}
\int_{0}^{t} \left|  \frac{\gamma_{j}(s,\epsilon)  \; \gamma_{i}(s,\epsilon)}{\epsilon} \right|ds
&
\leq \int_{0}^{T}  \left|  \frac{\gamma_{j}(s,\epsilon)  \; \gamma_{i}(s,\epsilon)}{\epsilon} \right|  ds\\
&
\hspace{-2.5cm}\leq T\; \epsilon  \; \left(\int_{-T}^{0}|\dot{e_{j}}(y)| dy  +2\|e_{j}\|_{\infty} \right)\left(\int_{-T}^{0}|\dot{e_{i}}(y)| dy  +2\|e_{i}\|_{\infty} \right) \left(  \sup_{s\in[0,T]} \left| X_{s} \right| \right) \left(  \sup_{u\in[0,T]} \left| Y_{u} \right| \right) 
\end{split}
\]
which trivially converges a.s. to zero when $\epsilon$ goes to zero which yields \eqref{eq 1}.
\item[2.] A generic element in $\{e_{j}\otimes \delta_a\}_{j\in \mathbb{N}}$ is difference of 
two positive definite elements of type $\{ e_{j}\otimes^{2},\delta_a \otimes^{2} ,(e_{j}+\delta_a )\otimes^{2} \}_{j\in \mathbb{N}}$. This shows {\bf H0''}.
It remains to show that 
\begin{equation}
[X(\cdot), Y(\cdot)]^{\epsilon}\left(e_{j}\otimes \delta_a \right)(t)\longrightarrow 0
\end{equation}
in probability for every $j\in \mathbb{N}$. In fact the left-hand side equals
\[
\int_{0}^{t}\frac{\gamma_{j}(s,\epsilon)}{\epsilon} \left( X_{s+a+\epsilon}-X_{s+a}\right) \, ds  \; .
\]
Using estimate \eqref{eq ESTIGAMM}, we obtain 
\[
\begin{split}
\int_{0}^{t}\left| \frac{\gamma_{j}(s,\epsilon)}{\epsilon} 
\left( Y_{s+a+\epsilon}-Y_{s+a}\right) \right|ds 
&
\leq  T  \left( \int_{-T}^{0}|\dot{e_{j}}(y)| dy  +2\|e_{j}\|_{\infty} \right)\left( \sup_{s\in[0,T]} \left| X_{s} \right|\right) \varpi_{Y} (\epsilon) \xrightarrow[\epsilon\longrightarrow 0]{a.s.} 0
\end{split}
\]
where $\varpi_{Y} (\epsilon) $ is the usual (random in this case) continuity modulus, so the result follows.
\item[3.] An element $\delta_a \otimes \delta_b $ is difference of 
two positive definite elements $(\delta_a+\delta_b)\otimes^{2}$ and $\delta_a \otimes^{2} +\delta_b \otimes^{2}$. So that Condition {\bf H0''} 
is fulfilled. 
Concerning Condition {\bf H2''} we have
\[
[X(\cdot), Y(\cdot)]^{\epsilon}\left(\delta_a \otimes\delta_b \right)(t)=
\frac{1}{\epsilon}	
\int_{0}^{t}\left(X_{s+a+\epsilon}-X_{s+a} \right) \left(Y_{s+b+\epsilon}-Y_{s+b} \right) ds\; .
\]
This converges to $[X_{\cdot+a},Y_{\cdot+b}]$ which exists by hypothesis. 

\end{enumerate}
This finally concludes the proof of Proposition \ref{pr QV123}.
\end{proof}

\begin{cor}			\label{cor DIAG}
Let $X$ and $Y$ be two real continuous processes such that $[X]$, $[Y]$ and $[X,Y]$ exist and $a$ is a given point in $[-\tau,0]$. 
Then $(X(\cdot),Y(\cdot))$ admits a
 $\chi^{0}([-\tau,0]^{2})$-covariation which equals 
\be		\label{eq QVCHI0}
[X(\cdot), Y(\cdot)](\mu)=\mu(\{0,0\}) [X,Y], \ \forall \mu \in \chi^0 .
\ee
\end{cor}
\begin{proof} 
Using Proposition \ref{pr DIRSUMHS}, it follows that $\chi^{0} ([-\tau,0]^{2})$ 
can be decomposed into  the finite direct sum decomposition 
$
L^{2}([-\tau,0]^{2}) \oplus
L^{2}([-\tau,0])\hat{\otimes}_{h}\mathcal{D}_{0}([-\tau,0]) \oplus
\mathcal{D}_{0}([-\tau,0])\hat{\otimes}_{h} L^{2}([-\tau,0])\oplus
\mathcal{D}_{0,0}([-\tau,0]^{2})
$. The results follow immediately applying Propositions \ref{prop somma diretta di qv} and \ref{pr QV123}. 
\end{proof}
When  $ \chi = \mathcal{D}_{0,0}([-\tau,0]^{2})$ the existence of a $\chi$-covariation for $(X,Y)$ holds even under weaker hypotheses. 
\begin{prop}
Let $X$, $Y$ be continuous processes such that $[X,Y]$ exists and for every sequence $(\epsilon_{n})\downarrow 0$, it exists a subsequence $(\epsilon_{n_{k}})$ such that 
\begin{equation} 			\label{eq CSBVCOV}
\sup_{k}  \frac{1}{\epsilon_{n_{k}}}\int_{0}^{T} \left| X_{s+\epsilon_{n_{k}}}-X_{s} \right|  \cdot \left|Y_{s+\epsilon_{n_{k}}}-Y_{s} \right|  ds\quad < + \infty\; .
\end{equation}
Then 1) the real covariation process $[X,Y]$ has bounded variation and 2) $X(\cdot)$ and $Y(\cdot)$ admit a $\mathcal{D}_{0,0}([-\tau,0]^{2})$-covariation and 
$
[X(\cdot),Y(\cdot)]_{t}(\mu)=\mu(\{0,0\})[X,Y]_{t} $.
\end{prop}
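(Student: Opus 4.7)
The plan is to apply Corollary \ref{cor SGS} to the one-dimensional (hence separable) Chi-subspace $\chi=\shd_{0,0}([-\tau,0]^{2})$, which is spanned by $\delta_{(0,0)}$. I take $\shs=\{\delta_{(0,0)}\}$, so condition \textbf{H0'} is trivial, and define the candidate application $\shs\longrightarrow \mathscr{C}([0,T])$ by $\mathcal{T}(\delta_{(0,0)})=[X,Y]$, extended linearly to $\chi$.

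The verification of \textbf{H1} is immediate from the hypothesis \eqref{eq CSBVCOV}. Indeed, every $\phi\in\chi$ has the form $\phi=\lambda\,\delta_{(0,0)}$ with $\|\phi\|_\chi=|\lambda|$, and
\[
\prescript{}{\chi}{\langle}\phi,(X_{s+\epsilon}(\cdot)-X_s(\cdot))\otimes(Y_{s+\epsilon}(\cdot)-Y_s(\cdot))\rangle_{\chi^{\ast}}=\lambda\,(X_{s+\epsilon}-X_s)(Y_{s+\epsilon}-Y_s),
\]
so the supremum over $\|\phi\|_\chi\le 1$ gives exactly $|X_{s+\epsilon}-X_s|\cdot|Y_{s+\epsilon}-Y_s|$, and \eqref{eq CSBVCOV} provides the required subsequence. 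For \textbf{H2'} applied to $\phi=\delta_{(0,0)}$, the computation above yields
\[
[X(\cdot),Y(\cdot)]^{\epsilon}(\delta_{(0,0)})_t=\int_0^t\frac{(X_{s+\epsilon}-X_s)(Y_{s+\epsilon}-Y_s)}{\epsilon}\,ds,
\]
which converges ucp to $[X,Y]_t$ by Definition \ref{def cov} and the ucp strengthening recalled in Remark after it (Lemma~3.1 of \cite{rv4}). Corollary \ref{cor SGS} then gives the existence of the $\chi$-covariation and, by linearity, $[X(\cdot),Y(\cdot)]_t(\mu)=\mu(\{0,0\})\,[X,Y]_t$, proving part~2.

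For part~1, I argue by lower semicontinuity of the total variation under uniform convergence. Along the subsequence $\epsilon_{n_k}$ furnished by \eqref{eq CSBVCOV}, the process $s\mapsto [X,Y]^{\epsilon_{n_k}}_s$ is absolutely continuous and therefore
\[
\|[X,Y]^{\epsilon_{n_k}}\|_{Var([0,T])}\le \frac{1}{\epsilon_{n_k}}\int_0^T |X_{s+\epsilon_{n_k}}-X_s|\cdot|Y_{s+\epsilon_{n_k}}-Y_s|\,ds=:C_{n_k},
\]
with $\sup_k C_{n_k}<\infty$ a.s. Extracting from $[X,Y]^{\epsilon_{n_k}}\xrightarrow{ucp}[X,Y]$ a further subsequence along which the convergence is uniform in $t$ a.s., the classical lower semicontinuity of $\|\cdot\|_{Var([0,T])}$ under uniform convergence (a consequence of passing to the limit in the telescopic sums $\sum_i|F(t_{i+1})-F(t_i)|$ for arbitrary partitions) yields $\|[X,Y]\|_{Var([0,T])}\le \liminf_k C_{n_k}<\infty$ a.s.

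The only mildly delicate step is this last lower semicontinuity argument; the rest is mechanical, precisely because $\chi$ is one-dimensional, so the pairing with $\delta_{(0,0)}$ reads off the value at $0$ of the $C([-\tau,0])$-valued increments and the whole statement collapses to properties of the real covariation $[X,Y]$.
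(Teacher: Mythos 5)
Your proof is correct in substance and, for part 2, follows essentially the paper's route: the paper simply refers back to the $\shd_{a,b}$ case of Proposition \ref{pr QV123} with $a=b=0$, noting that the only change is that condition \textbf{H1} is now obtained from \eqref{eq CSBVCOV} instead of from Cauchy--Schwarz — exactly your computation. For part 1 you genuinely diverge. The paper's argument is the slicker observation that, viewing $X$ and $Y$ themselves as $B$-valued processes with $B=\R$, one has $(B\hat{\otimes}_{\pi}B)^{\ast}\cong\R$ and \eqref{eq CSBVCOV} is precisely condition \textbf{H1} for the \emph{global} covariation of the real processes, so Corollary \ref{cor SGS} (i.e.\ Theorem \ref{thm QV}) automatically produces a bounded variation version of $[X,Y]$. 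You instead prove bounded variation by hand, via lower semicontinuity of the total variation under (a.s.\ uniform, though pointwise would suffice) convergence along the subsequence furnished by \eqref{eq CSBVCOV}. The content is the same — your $\liminf$ over partitions is exactly step b.2) in the proof of Theorem \ref{thm QV} — but your version is self-contained and more elementary, while the paper's buys economy by recycling its abstract machinery.

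One step deserves more care. To verify \textbf{H2'} of Corollary \ref{cor SGS} you need \emph{ucp} convergence of $\int_0^{\cdot}\epsilon^{-1}(X_{s+\epsilon}-X_s)(Y_{s+\epsilon}-Y_s)\,ds$ to $[X,Y]$, and you justify it by Lemma 3.1 of \cite{rv4}. That lemma is a Dini-type argument for \emph{monotone} approximants; it applies to $[X,X]^{\epsilon}$ or to the scalar covariation, but $t\mapsto[X,Y]^{\epsilon}_t$ is not increasing when $X\neq Y$, and the hypothesis that $[X,Y]$ exists only gives convergence in probability for each fixed $t$ (uniformly bounded variation plus pointwise convergence does not by itself yield uniform convergence). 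The paper circumvents this in Proposition \ref{pr QV123} by decomposing $\delta_{0}\otimes\delta_{0}$ as a difference of positive definite elements and invoking Corollary \ref{corollary}, where only convergence in probability is required. You should either adopt that decomposition (which effectively reduces the ucp issue to increasing processes) or supply an explicit upgrade of the convergence; as written, this citation is the one genuine soft spot in an otherwise sound argument.
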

\begin{proof}
1) The  processes $X$ and $Y$ take  values in $B = \R$ and  the (separable) space $\chi=(B\hat{\otimes}_{\pi}B)^{\ast}$
 coincides with $\R$. Taking into account Corollary \ref{cor SGS}, 
$(X,Y)$ admits therefore a global covariation 
 which coincides with the classical covariation $[X,Y]$ defined in Definition \ref{def cov} and in particular $[X,Y]$ has bounded variation.
2) The proof is again very similar to the one of Proposition \ref{pr QV123}. The only relevant difference consists in the way of checking 
the validity of condition \textbf{H1}. This will be verified identically until \eqref{eq 5.4bis}; the successive step 
will follow by \eqref{eq CSBVCOV}.
\end{proof}

Before mentioning some examples, we give some information
about the covariation structure of bifractional Brownian motion.
\begin{prop}		\label{pr BFMP}
Let $B^{H,K}$ be a bifractional Brownian motion with $HK=1/2$. Then $[B^{H,K}]_t=2^{1-K}t$ and 
$[B^{H,K}_{\cdot+a},B^{H,K}_{\cdot+b}] =0$ for $a\neq b\in [-\tau,0]$.
\end{prop}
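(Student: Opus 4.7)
The first identity $[B^{H,K}]_t=2^{1-K}t$ under $HK=1/2$ is established in \cite{rtudor}, so I quote it and concentrate on the off-diagonal assertion. Assume without loss of generality $a>b$ and restrict $\epsilon\in(0,a-b)$; the plan is to prove $L^{2}$-convergence to zero of
\[
I_t^\epsilon := \frac{1}{\epsilon}\int_0^t (B^{H,K}_{u+a+\epsilon}-B^{H,K}_{u+a})(B^{H,K}_{u+b+\epsilon}-B^{H,K}_{u+b})\,du,
\]
which yields pointwise-in-$t$ convergence in probability; since the prospective limit is the continuous zero process, Lemma 3.1 in \cite{rv4} (invoked already in Remark~\ref{rem RE1}.1) delivers the covariation $[B^{H,K}_{\cdot+a},B^{H,K}_{\cdot+b}]$ identically equal to $0$.

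For the expectation I use the covariance $R(s,t)=2^{-K}\bigl((s^{2H}+t^{2H})^K-|t-s|\bigr)$ (valid since $2HK=1$), so that $\mathbb{E}[G_{u,a}G_{u,b}]$ with $G_{v,c}:=B^{H,K}_{v+c+\epsilon}-B^{H,K}_{v+c}$ equals the rectangular second difference of $R$ at the four corners $(u+a,u+b)$, $(u+a+\epsilon,u+b)$, $(u+a,u+b+\epsilon)$, $(u+a+\epsilon,u+b+\epsilon)$. The $|\cdot|$ piece collapses by the triangle equality $-|a-b|+|a-b+\epsilon|+|a-b-\epsilon|-|a-b|=0$ for $\epsilon<a-b$. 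The remaining smooth piece $f(x,y):=(x^{2H}+y^{2H})^K$ is $C^{\infty}$ on $(0,\infty)^2$; on the region where both $u+a>0$ and $u+b>0$, Taylor expansion gives the rectangular $f$-difference $=\epsilon^{2}\partial_x\partial_y f(u+a,u+b)+O(\epsilon^{3})$, contributing $O(\epsilon)$ to $\mathbb{E}[I_t^\epsilon]$. On the boundary strips of total length $O(\epsilon)$ where the constant extension of the paths outside $[0,T]$ alters one endpoint, a direct expansion of $R$ still gives $\mathbb{E}[G_{u,a}G_{u,b}]=O(\epsilon)$, yielding another $O(\epsilon)$ contribution; hence $\mathbb{E}[I_t^\epsilon]\to 0$.

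For the variance I apply the Wick formula to the centered Gaussian quadruple $(G_{u,a},G_{u,b},G_{u',a},G_{u',b})$: setting $X_u:=G_{u,a}G_{u,b}$,
\[
\mathbb{E}[X_uX_{u'}]-\mathbb{E}[X_u]\mathbb{E}[X_{u'}]=\mathbb{E}[G_{u,a}G_{u',a}]\mathbb{E}[G_{u,b}G_{u',b}]+\mathbb{E}[G_{u,a}G_{u',b}]\mathbb{E}[G_{u,b}G_{u',a}].
\]
Each kernel $\mathbb{E}[G_{u,c}G_{u',c'}]$ is itself a rectangular second difference of $R$: its $|t-s|$-piece produces the Brownian-type triangular kernel $2^{1-K}(\epsilon-|u-u'+c-c'|)^{+}$ supported on a strip of width $2\epsilon$, and its $f$-piece is a smooth $O(\epsilon^{2})$ correction. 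In the first Wick term ($c=c'$) both triangles lie along the diagonal $\{u=u'\}$; their product integrated over $[0,t]^{2}$ is $O(\epsilon^{3})$, giving $O(\epsilon)$ after division by $\epsilon^{2}$. In the second Wick term the two triangles are shifted by $\pm(a-b)$ and hence have disjoint supports; the cross-products with the smooth parts and the smooth-smooth product contribute $O(\epsilon^{2})$ after normalisation. Consequently $\operatorname{Var}(I_t^\epsilon)=O(\epsilon)\to 0$, so $I_t^\epsilon\to 0$ in $L^{2}$ and a fortiori in probability.

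The main obstacle is the careful bookkeeping of the boundary strips of length $\epsilon$ introduced by the constant extension of the paths outside $[0,T]$: away from these strips everything reduces to clean second-order Taylor expansions of the smooth function $f$, whereas inside them one has to verify directly that each Gaussian increment remains of $L^{2}$-size $\sqrt{\epsilon}$ (which follows from the scalar control $[B^{H,K}]_t=2^{1-K}t$), so that the strip of length $\epsilon$ contributes $O(\epsilon)$ after the $1/\epsilon$ normalisation. The Wick reduction of the fourth moment is otherwise routine and cleanly decouples the variance computation into two products of covariance kernels already analysed in the mean step.
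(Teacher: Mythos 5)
Your proof is correct in substance but follows a genuinely different route from the paper's. The paper argues softly: by Proposition 1 and Theorem 2 of \cite{nuaLei} one has $\alpha X^{H,K}+B^{H,K}=\beta W$ with $X^{H,K}$ a Gaussian process having absolutely continuous (hence bounded variation) paths and $W$ a standard Brownian motion; expanding $[\alpha X^{H,K}_{\cdot+a}+B^{H,K}_{\cdot+a},\alpha X^{H,K}_{\cdot+b}+B^{H,K}_{\cdot+b}]$ by bilinearity, the three terms containing $X^{H,K}$ vanish because a bounded variation process has zero covariation with any finite quadratic variation process (point 6 of Proposition 1 in \cite{Rus05}), and $[W_{\cdot+a},W_{\cdot+b}]=0$ for $a\neq b$ because $W$ is a semimartingale. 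That argument is a few lines but hinges on the Lei--Nualart decomposition, which is special to $HK=1/2$. Your direct second-moment computation with the explicit covariance $R(s,t)=2^{-K}\bigl((s^{2H}+t^{2H})^K-|t-s|\bigr)$ and the Wick formula is self-contained, yields the stronger $L^{2}$-convergence, and isolates exactly why the off-diagonal bracket dies (the triangular kernels produced by the $|t-s|$ part have disjoint supports once $\epsilon<a-b$); the price is heavier bookkeeping. One point to tighten: your blanket claim that the smooth part of each kernel is an $O(\epsilon^{2})$ correction is not uniform, since $\partial_x\partial_y(x^{2H}+y^{2H})^K$ blows up like the reciprocal of the distance to the origin along the diagonal, which matters for the $c=c'$ kernels when $u,u'$ are both near $-a$ (or $-b$); there the rectangle integral of the mixed derivative is only $O(\epsilon^{2HK})=O(\epsilon)$. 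This does not break the argument --- the offending $(u,u')$-region has area $O(\epsilon^{2})$ and Cauchy--Schwarz together with the quasi-helix bound $\mathbb{E}\bigl[(B^{H,K}_{v+\epsilon}-B^{H,K}_{v})^{2}\bigr]\leq C\epsilon$ keeps every normalized contribution $o(1)$ --- but it should be stated explicitly rather than folded into the ``boundary strip'' discussion.
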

\begin{rem}
\begin{itemize}
\item
If $K=1$, then $H=1/2$ and $B^{H,K}$ is a Brownian motion. 
\item
In the case $K\neq 1$ we recall that the bifractional Brownian motion $B^{H,K}$ is not a semimartingale, see Proposition 6 from \cite{rtudor}.
\end{itemize}
\end{rem}
\begin{proof}[Proof of Proposition \ref{pr BFMP}.] \
Proposition 1 in \cite{rtudor} says that $B^{H,K}$ has finite quadratic
 variation which is equal to 
$[B^{H,K}]_t=2^{1-K}t$. 
By Proposition 1 and Theorem 2 in \cite{nuaLei} there are two constants 
$\alpha$ and $\beta$ depending on $K$,		
a centered Gaussian process $X^{H,K}$ with 
absolutely continuous trajectories on $[0,+\infty[$ 
and a standard Brownian motion $W$
such that $\alpha X^{H,K}+B^{H,K}=\beta W$. 
Then 
\be  \label{eqTER}
[\alpha X_{\cdot+a}^{H,K}+B_{\cdot+a}^{H,K},\alpha X_{\cdot+b}^{H,K}+B_{\cdot+b}^{H,K} ] =
\beta^{2}[ W_{\cdot+a}, W_{\cdot+b}].
\ee
Using the bilinearity of the covariation, we expand the left-hand side 
in \eqref{eqTER} into the sum of four terms 
\begin{equation}	\label{eq SUM}
\alpha^{2} [ X_{\cdot+a}^{H,K}, X_{\cdot+b}^{H,K}]+
\alpha [B_{\cdot+a}^{H,K},  X_{\cdot+b}^{H,K}]+
\alpha [X_{\cdot+a}^{H,K}, B_{\cdot+b}^{H,K} ] +
[B^{H,K}_{\cdot+a},B^{H,K}_{\cdot+b }]. 
\end{equation}
Since $X^{H,K}$ has bounded variation then the first three terms of
 \eqref{eq SUM} vanish because of point 6) of Proposition 1 in \cite{Rus05}.
On the other hand the  right-hand side of
 \eqref{eqTER} is equal to zero for $a\neq b$ since $W$ is a semimartingale,
 see Example \ref{ese PROCQV}, item 1.
We conclude that $[B^{H,K}_{\cdot+a},B^{H,K}_{\cdot+b}] =0$ if $a\neq b$.
\end{proof}

\begin{ese}			\label{ese PROCQV}
We list some examples of processes $X$ for which $X(\cdot)$ admits a $\chi$-quadratic variation through Proposition \ref{pr QV123} and Corollary \ref{cor DIAG} and 
it is explicitly given by the quadratic variation structure $[X]$ of the real process $X$.
\begin{enumerate}
\item 
All continuous real semimartingales $S$ (for instance  Brownian motion). In
 fact $S$ is a finite quadratic variation process; moreover $[S_{\cdot+a},S_{\cdot+b}]=0$ for $a\neq b$, as it easily follows by Corollary 3.11 in \cite{crwdp}.
\item 
Let $B^{H,K}$ be a bifractional Brownian motion with parameters $H$ and $K$ and such that $HK=1/2$. As shown in Proposition \ref{pr BFMP}, 
$B^{H,K}$  satisfies the hypotheses of the Corollary \ref{cor DIAG}. 
\item 
Let $D$ be a real continuous $(\mathcal{F}_{t})$-Dirichlet process with decomposition $D=M+A$, $M$ local martingale and $A$ zero quadratic variation process. 
Then $D$ satisfies the hypotheses of the Corollary \ref{cor DIAG}. 
In fact $[D]=[M]$ and $[D_{\cdot+a},D_{\cdot+b }]=0$ for $a\neq b$. 
\end{enumerate}
\end{ese} 

We go on evaluating other $\chi$-covariations.
\begin{prop}
Let $V$ and $Z$ be two real absolutely continuous 
processes such that 
$V', Z' \in L^{2}([0,T])$ $\omega$-a.s.
Then  $ (V(\cdot), Z(\cdot))$ has  
zero scalar and tensor covariation. In particular $(V(\cdot), 
Z(\cdot))$  admits
 a zero global covariation.
\end{prop}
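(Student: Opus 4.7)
The plan is to reduce everything to showing that $V(\cdot)$ and $Z(\cdot)$ have vanishing scalar covariation; then Remark \ref{rem RE1} item 4 gives the vanishing of the tensor covariation, and Proposition \ref{pr TGH78} yields the zero global covariation. So the real work is to prove that
\[
[V(\cdot),Z(\cdot)]_{t}^{\mathbb{R},\epsilon}
= \int_{0}^{t}\frac{\|V_{s+\epsilon}(\cdot)-V_{s}(\cdot)\|_{\infty}\,\|Z_{s+\epsilon}(\cdot)-Z_{s}(\cdot)\|_{\infty}}{\epsilon}\,ds \xrightarrow[\epsilon\to 0]{ucp} 0.
\]

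The key estimate is Cauchy--Schwarz applied pathwise: for each $u\in[-\tau,0]$,
\[
|V_{s+u+\epsilon}-V_{s+u}|^{2} = \Bigl|\int_{s+u}^{s+u+\epsilon}V'(r)\,dr\Bigr|^{2} \leq \epsilon \int_{s+u}^{s+u+\epsilon}V'(r)^{2}\,dr.
\]
Setting $G_{V}(t):=\int_{0}^{t}V'(r)^{2}\,dr$ (and recalling that $V'$ is understood as zero outside $[0,T]$), taking $\sup_{u\in[-\tau,0]}$ yields
\[
\|V_{s+\epsilon}(\cdot)-V_{s}(\cdot)\|_{\infty}^{2} \leq \epsilon\, \rho_{V}(s,\epsilon), \qquad \rho_{V}(s,\epsilon):=\sup_{u\in[-\tau,0]}\bigl(G_{V}(s+u+\epsilon)-G_{V}(s+u)\bigr),
\]
and analogously with $Z$. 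Plugging these into the integrand, the factors of $\epsilon$ cancel and one obtains the pointwise bound
\[
\frac{\|V_{s+\epsilon}(\cdot)-V_{s}(\cdot)\|_{\infty}\,\|Z_{s+\epsilon}(\cdot)-Z_{s}(\cdot)\|_{\infty}}{\epsilon} \leq \sqrt{\rho_{V}(s,\epsilon)\,\rho_{Z}(s,\epsilon)}.
\]

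Now $G_{V}$ and $G_{Z}$ are continuous functions on the compact interval $[-\tau,T+1]$, hence uniformly continuous, so $\rho_{V}(\cdot,\epsilon)\to 0$ and $\rho_{Z}(\cdot,\epsilon)\to 0$ uniformly in $s$ as $\epsilon\downarrow 0$, $\omega$ by $\omega$. Consequently,
\[
\sup_{t\in[0,T]}\bigl|[V(\cdot),Z(\cdot)]_{t}^{\mathbb{R},\epsilon}\bigr| \leq T\,\sqrt{\sup_{s}\rho_{V}(s,\epsilon)}\,\sqrt{\sup_{s}\rho_{Z}(s,\epsilon)} \xrightarrow[\epsilon\to 0]{a.s.} 0,
\]
which is even stronger than ucp convergence. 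This establishes $[V(\cdot),Z(\cdot)]^{\mathbb{R}}=0$ and closes the argument via the two citations above.

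There is no real obstacle: the absolute continuity hypothesis together with $V',Z'\in L^{2}$ provides exactly the right reservoir to absorb the $1/\epsilon$ factor through Cauchy--Schwarz and to conclude by uniform continuity of the indefinite integrals. The only minor care is to use the $L^{2}$-based Cauchy--Schwarz (rather than $L^{1}$) so that the gain in $\epsilon$ exactly matches the $1/\epsilon$ weight, and to remember that $V,Z$ have been prolonged by constants outside $[0,T]$ so all integrals involving translated arguments remain well defined.
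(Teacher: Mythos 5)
Your proof is correct and follows essentially the same route as the paper: both arguments reduce to the scalar covariation via Remark \ref{rem RE1} item 4 and Proposition \ref{pr TGH78}, then use Cauchy--Schwarz in the form $|\int_{s+u}^{s+u+\epsilon}V'|^{2}\leq\epsilon\int_{s+u}^{s+u+\epsilon}V'^{2}$ to absorb the $1/\epsilon$, and conclude by uniform continuity of $t\mapsto\int_{0}^{t}V'(y)^{2}dy$. The only cosmetic difference is that the paper first applies an outer Cauchy--Schwarz in the $ds$-integral to separate the $V$ and $Z$ factors and then bounds each by $T$ times the relevant modulus of continuity, whereas you bound the integrand pointwise in $s$ by $\sqrt{\rho_{V}(s,\epsilon)\rho_{Z}(s,\epsilon)}$; both yield the same a.s.\ convergence to zero.
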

\begin{proof} 
Similarly to the proof of Proposition \ref{prop ZQVHC}, 
by Remark \ref{rem RE1} item 4. and Proposition \ref{pr TGH78} we only
 need to show that $(V(\cdot),Z(\cdot))$ admits a zero scalar covariation,
 i.e.  the convergence to zero in probability of the quantity
\begin{equation}		\label{eq ZE1}
\int_{0}^{T}
\frac{1}{\epsilon} \left \| 
V_{s+\epsilon}(\cdot)-V_{s}(\cdot)
\right\|_{B}  \left \| 
Z_{s+\epsilon}(\cdot)-Z_{s}(\cdot)
\right\|_{B}  ds  .
\end{equation}
By Cauchy-Schwarz, \eqref{eq ZE1} is bounded by
\begin{equation}		\label{eq ZE11}
\sqrt{\int_{0}^{T}
\frac{1}{\epsilon} \sup_{x\in [-\tau,0]} 
\left | 
V_{s+\epsilon}(x)-V_{s}(x)
\right|^{2} ds}\cdot
\sqrt{
 \int_{0}^{T}
\frac{1}{\epsilon} \sup_{x\in [-\tau,0]} 
\left | 
Z_{u+\epsilon}(x)-Z_{u}(x)
\right|^{2} du}\; , 
\end{equation}
which will be shown to converge even a.s. to zero. The square of the first square root in \eqref{eq ZE11} equals
\[
\int_{0}^{T}
\frac{1}{\epsilon} \sup_{x\in [-\tau,0]} 
\left | 
\int_{s+x}^{s+x+\epsilon} V'(y)dy
\right|^{2} ds
\leq 
\int_{0}^{T}
\frac{1}{\epsilon} \max_{x\in [-\tau,0]}  
\int_{s+x}^{s+x+\epsilon} V'(y) ^{2}  dy
ds
\leq 
T \,\varpi_{\int_{0}^{\cdot}(V'^{2})(y)dy}(\epsilon) 
\xrightarrow[\epsilon\longrightarrow 0]{a.s.} 0 \, ,
\]
since $\varpi_{\int_{0}^{\cdot}(V'^{2})(y)dy}(\epsilon) $ denotes the modulus of continuity of the a.s. continuous function $t\mapsto \int_{0}^{t}(V'^{2})(y)dy$. 
The square of the second square root in \eqref{eq ZE11} can be treated analogously and the result is finally established.
\end{proof}
If $X$ is a finite quadratic variation  processes then $\X=X(\cdot)$ admits a $Diag([-\tau,0]^{2})$-quadratic variation, where $Diag([-\tau,0]^{2})$ was defined in \eqref{eq-def diag}. 
This is the object of Proposition \ref{pr DIAG tau}.
\begin{prop}  		\label{pr DIAG tau}    
Let $0< \tau \leq T$. 
Let  $X$ and $Y$ be two real continuous processes such that $[X,Y]$ exists and
\eqref{eq CSBVCOV} is verified. 
Then $(X(\cdot),Y(\cdot))$ admits a $Diag([-\tau,0]^{2})$-covariation.
Moreover we have 
\begin{equation}			\label{eq QV DIAG tau}
\widetilde{[X(\cdot),Y(\cdot)]}_{t}(\mu)=\int_{0}^{t\wedge \tau } g(-x) [X,Y]_{t-x} dx \; , \hspace{2cm}		t\in[0,T] \; ,
\end{equation}
where $\mu$ is a generic element in $Diag([-\tau,0]^{2})$ of the type $\mu(dx,dy)=g(x)\delta_{y}(dx)dy$, 
with associated $g$ in $L^{\infty}([-\tau,0])$. 
\end{prop}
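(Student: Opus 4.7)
The plan is to verify directly the two conditions \textbf{H1} and \textbf{H2} of Definition \ref{def CHICOV}, since $Diag([-\tau,0]^2)$ is not separable and Corollary \ref{cor SGS} is therefore unavailable. I would start by computing the dual norm on rank-one tensors: formula \eqref{eq-def dualita con misura diag} gives, for $u_1,u_2\in C([-\tau,0])$, $\langle u_1\otimes u_2,\mu_g\rangle=\int_{-\tau}^0 u_1(x)u_2(x)g(x)\,dx$, so taking the supremum over $\|g\|_\infty\le 1$ yields
\begin{equation*}
\left\|(X_{s+\epsilon}(\cdot)-X_s(\cdot))\otimes(Y_{s+\epsilon}(\cdot)-Y_s(\cdot))\right\|_{Diag^\ast}=\int_{-\tau}^0\left|X_{s+\epsilon+x}-X_{s+x}\right|\left|Y_{s+\epsilon+x}-Y_{s+x}\right|dx.
\end{equation*}
Fubini, the change of variable $u=s+x$, and the constant extension of $X,Y$ outside $[0,T]$ reduce \textbf{H1} to the hypothesis \eqref{eq CSBVCOV}, up to two boundary terms that contribute finitely a.s.

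For \textbf{H2}(i) I would again use Fubini to write, for each $\mu_g$,
\begin{equation*}
[X(\cdot),Y(\cdot)]^\epsilon(\mu_g)(t)=\int_{-\tau}^0 g(x)I_\epsilon(t,x)\,dx,\quad I_\epsilon(t,x):=\frac{1}{\epsilon}\int_x^{t+x}(X_{u+\epsilon}-X_u)(Y_{u+\epsilon}-Y_u)\,du.
\end{equation*}
Setting $J_\epsilon(a):=\frac{1}{\epsilon}\int_0^a(X_{u+\epsilon}-X_u)(Y_{u+\epsilon}-Y_u)\,du$, one has $I_\epsilon(t,x)=J_\epsilon(t+x)-J_\epsilon(x)$. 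The hypothesis that $[X,Y]$ exists, combined with the constant extension, implies that $J_\epsilon$ converges ucp on $[-\tau,T+\tau]$ to $J(a):=[X,Y]_{(a\vee 0)\wedge T}$; hence $I_\epsilon\to I$ with $I(t,x)=[X,Y]_{(t+x)\vee 0}$ uniformly in probability on $[0,T]\times[-\tau,0]$. Dominated convergence in $x$ then produces the ucp convergence of $[X(\cdot),Y(\cdot)]^\epsilon(\mu_g)$ to $\int_{-\tau}^0 g(x)[X,Y]_{(t+x)\vee 0}dx$, and the substitution $y=-x$ yields the stated formula \eqref{eq QV DIAG tau}.

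For \textbf{H2}(ii) I would define $\widetilde{[X(\cdot),Y(\cdot)]}_t$ via the explicit formula \eqref{eq QV DIAG tau}: it is linear in $g$ with norm in $Diag^\ast$ bounded by $\tau\sup_{s\le T}|[X,Y]_s|$, and weakly star continuous in $t$ by dominated convergence using continuity of $[X,Y]$. The delicate step is the \emph{strong} bounded variation in $Diag^\ast$; here I would use that $[X,Y]$ has bounded variation (granted, under \eqref{eq CSBVCOV}, by the preceding proposition), write $[X,Y]_u=\int_0^u dA_v$ for a signed measure $dA$, and apply Fubini to obtain
\begin{equation*}
\widetilde{[X(\cdot),Y(\cdot)]}_t(\mu_g)=\int_0^t H_g(t,v)\,dA_v,\qquad H_g(t,v):=\int_0^{(t-v)\wedge\tau}g(-y)\,dy,
\end{equation*}
together with the uniform estimates $|H_g(t,v)-H_g(s,v)|\le \|g\|_\infty(t-s)$ and $|H_g(t,v)|\le \|g\|_\infty\tau$. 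Summing these bounds over any partition of $[0,T]$ controls the $Diag^\ast$-total variation by $(T+\tau)\|A\|_{TV([0,T])}$.

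The main obstacle is precisely the non-separability of $Diag([-\tau,0]^2)$: it forces an explicit construction of the candidate limit in $Diag^\ast$ and a direct proof of its bounded variation, neither of which can be reduced to a dense-subspace argument. All other verifications reduce, through Fubini, to the one-dimensional ucp convergence and finite variation of the real bracket $[X,Y]$, both of which are in hand by hypothesis and the preceding proposition.
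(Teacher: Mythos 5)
Your proof is correct and follows essentially the same route as the paper's: direct verification of \textbf{H1} and \textbf{H2} through the $L^{1}$-type dual norm on $Diag([-\tau,0]^{2})^{\ast}$, Fubini, and the uniform (ucp) convergence of the real bracket $[X,Y]^{\epsilon}$. The only substantive difference is that you spell out the bounded variation of the $\chi^{\ast}$-valued modification in \textbf{H2}(ii) (via the kernel $H_{g}$ and the signed measure $d[X,Y]$, whose existence indeed rests on the preceding proposition), where the paper merely asserts that the total variation is easily dominated by $\int_{0}^{T}|[X,Y]_{x}|\,dx$.
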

\begin{rem}\label{R416}
Taking into account the usual convention
 $[X,Y]_{t}=0$ for $t < 0$, the process\\
 $\left( \int_{0}^{t\wedge \tau } g(-x) [X,Y]_{t-x} dx\right)_{0 \le t \le T}$ 
can also be written as 
$\left( \int_{0}^{\tau } g(-x) [X,Y]_{t-x} dx\right)_{0 \leq t \le T}$. 
\end{rem}
\begin{proof} [Proof of Proposition \ref{pr DIAG tau}]
We recall that, for a generic element $\mu$, we have $\|\mu\|_{Diag}=\|g\|_{\infty}$.\\ 
First we verify Condition {\bf H1}. We can write
\begin{displaymath}
\begin{split}
&
\frac{1}{\epsilon}\int_{0}^{T}\sup_{\| \mu \|_{Diag }  \leq 1}  \left|  \langle \mu, \left( X_{s+\epsilon}(\cdot)-X_{s}(\cdot) \right)
\otimes \left( Y_{s+\epsilon}(\cdot)-Y_{s}(\cdot) \right)\rangle   \right| \, ds 
\\
& \leq 
 \frac{1}{\epsilon} \int_{0}^{T} \sup _{\| g \|_{\infty} \leq 1} \left|  \int_{-T}^{0}g(x)  \left( X_{s+\epsilon}(x)-X_{s}(x) \right) \left( Y_{s+\epsilon}(x)-Y_{s}(x) \right)\, dx \right| \,ds
  \\
&=
\int_{0}^{T}   \sup _{\| g \|_{\infty} \leq 1}  \left|  \int_{0}^{s}   \frac{  \left(X_{x+\epsilon}-X_{x}\right) \left(Y_{x+\epsilon}-Y_{x}\right)  }{\epsilon}g(x-s)\,dx\right| ds  \; .
\end{split}
\end{displaymath}
Condition {\bf H1} is verified because of Hypothesis \eqref{eq CSBVCOV}.\\
It remains to prove Condition {\bf H2}. 
Using Fubini's theorem, we write
\begin{equation}		\label{eq DQV}
\begin{split}
 [X(\cdot),Y(\cdot)]^{\epsilon}_{t}(\mu)&=
\frac{1}{\epsilon}\int_{0}^{t}\langle \mu(dx,dy),\left(X_{s+\epsilon}(\cdot)-X_{s}(\cdot)\right)\otimes \left(Y_{s+\epsilon}(\cdot)-Y_{s}(\cdot)\right)\rangle\,ds\\
&=\frac{1}{\epsilon}\int_{0}^{t}\int_{[-\tau,0]}\left(X_{s+\epsilon}(x)-X_{s}(x)\right) \left(Y_{s+\epsilon}(x)-Y_{s}(x)\right)g(x)dx\;ds\\
&=\int_{(-t)\vee (-\tau)}^{0} g(x)\int_{-x}^{t} \frac{  \left(X_{s+x+\epsilon}-X_{s+x}\right) \left(Y_{s+x+\epsilon}-Y_{s+x}\right) }{\epsilon}\,ds\,dx\\
&		
=
\int_{(-t)\vee (-\tau)}^{0} g(x)\int_{0}^{t+x}\frac{\left( X_{s+\epsilon}-X_{s}\right)\left( Y_{s+\epsilon}-Y_{s}\right) }{\epsilon}\,ds\;dx\\
&
=\int_{0}^{ t\wedge \tau } g(-x)\int_{0}^{t-x}\frac{\left( X_{s+\epsilon}-X_{s}\right)\left( Y_{s+\epsilon}-Y_{s}\right) }{\epsilon}\,ds\;dx  \; .
\end{split}
\end{equation}
To conclude the proof of \textbf{H2}(i) it remains to show that
\[
\left( \int_{0}^{t\wedge \tau} g(-x)\int_{0}^{t-x}\frac{\left( X_{s+\epsilon}-X_{s}\right)\left( Y_{s+\epsilon}-Y_{s}\right) }{\epsilon}\,ds\;dx \right)_{t\in [0,T]}
\xrightarrow[\epsilon\longrightarrow 0]{ucp} 
\left(\int_{0}^{t\wedge \tau} g(-x)[X,Y]_{t-x}\,dx \right)_{t\in [0,T]},
\] 
\begin{equation}		\label{eq AMONT}
\textrm{i.e.}
\quad 
\sup_{t\leq T} \left|  \int_{0}^{t\wedge \tau } \left( g(-x)\int_{0}^{t-x}\frac{\left( X_{s+\epsilon}-X_{s}\right) \left( Y_{s+\epsilon}-Y_{s}\right) }{\epsilon}\,ds  -[X,Y]_{t-x} \right) \;dx  
\right| \xrightarrow [\epsilon \longrightarrow 0 ]  {\mathbb{P}}0  \, .
\end{equation} 
The left-hand side of \eqref{eq AMONT} is bounded by 
\[
\begin{split}
&
\int_{0}^{T}|g(-x)|  \sup_{t\in[0,T]}  
\left| \int_{0}^{t-x} \frac{ \left(X_{s+\epsilon}-X_{s}\right)\left(Y_{s+\epsilon}-Y_{s}\right) }{\epsilon} \,ds- [X,Y]_{t-x} \right| dx 
\\
&
\leq
T \left\|g\right\|_{\infty} 
\sup_{t\in[0,T]}  
\left| \int_{0}^{t} \frac{ \left(X_{s+\epsilon}-X_{s}\right) \left(Y_{s+\epsilon}-Y_{s}\right) }{\epsilon} \,ds- [X,Y]_{t} \right| \; .
\end{split}
\]
Since $X$ and $Y$ admit a covariation, previous expression converges to zero.
This shows  Condition \textbf{H2}(i).\\
Concerning  Condition \textbf{H2}(ii), we have
\[
[X(\cdot),Y(\cdot)]_{t}(\mu)=\int_{0}^{t\wedge \tau } g(-x)[X,Y]_{t-x}\,dx=
\begin{dcases}
\int_{0}^{t}g(-x)[X,Y]_{t-x} dx &  0\leq t\leq \tau \\
\int_{0}^{\tau}g(-x)[X,Y]_{t-x}dx     &			\tau< t\leq T\; .
\end{dcases} 
\]
 Previous expression has an obvious modification $\widetilde{[X(\cdot),Y(\cdot)]}$
which has finite variation with values in $\chi^\ast$.
The total variation is  in fact easily dominated by $\int_{0}^{T} | [X,Y]_{x}|dx$. 
\end{proof}
A useful proposition related to Proposition \ref{pr DIAG tau} is the following. 
We recall that $D([-\tau,0])$ denotes the space of  c\`adl\`ag functions equipped with
 the uniform norm and  $Diag_{d}([-\tau,0]^{2})$ was introduced in Notation \ref{nnn}. 
\begin{prop}		\label{prop 5GT6}
Let $X$ be a finite quadratic variation process. Let $G:[0,T]\longrightarrow \chi:=Diag_{d}([-\tau,0]^{2})$, c\`adl\`ag. 
We have 
\begin{equation}			\label{eq A1}
\int_{0}^{T} \prescript{}{\chi}{\langle} G(s)\,,  \, d\widetilde{[X(\cdot)]}_{s}\rangle_{\chi^{\ast}}=
\int_{0}^{\tau} \left(  \int_{x}^{T} g(s,-x)[X]_{ds-x}  \right) dx=
\int_{0}^{\tau}\left( \int_{0}^{T-x} g(s+x,-x)d[X]_{s}\right)dx ,
\end{equation}
where $G(s)=g(s,x)\delta_{y}(dx)dy$ for some bounded Borel function $g :[0,T]\times [-\tau,0]\longrightarrow \R$ and $[X]_{ds-x}$ represents the
measure differential associated with
the increasing function $s\mapsto [X]_{s+x}$. 
\end{prop}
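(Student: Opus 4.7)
The plan is to verify the identity \eqref{eq A1} first for elementary step functions $G$, using Proposition \ref{pr DIAG tau} directly, and then to extend it to general c\`adl\`ag $G$ by uniform approximation. By Proposition \ref{pr DIAG tau} applied with $Y=X$ and the convention $[X]_u = 0$ for $u\le 0$, for every $\mu \in \chi$ associated to a function $g_0 \in D([-\tau,0])$ one has $\widetilde{[X(\cdot)]}_t(\mu) = \int_0^\tau g_0(-x)[X]_{t-x}\,dx$, and therefore $\langle \mu, \widetilde{[X(\cdot)]}_b - \widetilde{[X(\cdot)]}_a\rangle = \int_0^\tau g_0(-x)\{[X]_{b-x}-[X]_{a-x}\}\,dx$ for every $0\le a<b\le T$.

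Taking a step function $G(s) = \sum_i \1_{]t_i,t_{i+1}]}(s)\mu_i$ with $\mu_i(dx,dy) = g(t_i,x)\delta_y(dx)dy$, summing these telescoping identities over $i$ and applying Fubini to the finite double sum yields $\int_0^T \langle G(s), d\widetilde{[X(\cdot)]}_s\rangle = \int_0^\tau \sum_i g(t_i,-x)\{[X]_{t_{i+1}-x}-[X]_{t_i-x}\}\,dx = \int_0^\tau \int_0^{T-x} g(s+x,-x)\,d[X]_s\,dx$. This matches one of the expressions on the right-hand side of \eqref{eq A1}; the equivalence with the form $\int_x^T g(s,-x)[X]_{ds-x}$ is the change of variable $u = s - x$. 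Hence the identity holds on step functions.

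Since $G:[0,T] \to Diag_d([-\tau,0]^2)$ is c\`adl\`ag and the norm of $Diag_d$ reduces to the sup-norm of the associated function $g(s,\cdot) \in D([-\tau,0])$, there exist step functions $G_n$ (built on subdivisions refining the set of large jumps of $G$) satisfying $\sup_{s\in[0,T]}\|G_n(s)-G(s)\|_\chi \to 0$ with $\sup_n \sup_s \|G_n(s)\|_\chi < \infty$. On the left-hand side of \eqref{eq A1}, since $\widetilde{[X(\cdot)]}$ has finite total variation in $\chi^\ast$ (bounded, as shown in the proof of Proposition \ref{pr DIAG tau}, by $\int_0^T [X]_u\,du$), uniform convergence $G_n\to G$ gives convergence of the corresponding Stieltjes integrals. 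On the right-hand side, the integrand $g_n(s+x,-x)$ is uniformly bounded and converges $d[X]_s\otimes dx$-almost everywhere to $g(s+x,-x)$, so dominated convergence closes the argument.

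The main technical obstacle lies in the careful handling of the c\`adl\`ag-to-Banach approximation: one must produce step functions $G_n$ converging uniformly to $G$ in the norm of $\chi$, which, because that norm reduces to $\|g(s,\cdot)\|_\infty$ over $[-\tau,0]$, amounts to a uniform joint approximation of $g$ in $(s,x)$. Once this and the bounded variation of $\widetilde{[X(\cdot)]}$ in $\chi^\ast$ are in hand, both sides of \eqref{eq A1} become continuous in $G$ for the uniform topology, so the validity on step functions established via Proposition \ref{pr DIAG tau} extends to all c\`adl\`ag $G$.
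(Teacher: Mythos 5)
Your proposal is correct and follows essentially the same route as the paper's proof: establish \eqref{eq A1} for step functions $G$ via Proposition \ref{pr DIAG tau} and a telescoping/Fubini argument, then pass to the limit for general c\`adl\`ag $G$ using the uniform bound on $g$, the bounded variation of $\widetilde{[X(\cdot)]}$ in $\chi^{\ast}$, and dominated convergence. The only (immaterial) difference is that you approximate $G$ uniformly in $s$ by step functions adapted to its jumps, whereas the paper uses the left-endpoint discretization $A_i^n=G(t_i)$ and only pointwise-in-$s$ convergence of $g^n(s,\cdot)$ together with dominated convergence.
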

\begin{proof} 
We remark that $t\mapsto g(t,\cdot)$ is left continuous from $[0,T]$ to $D( [-\tau,0])$ equipped with the $\|\cdot\|_{\infty}$ norm.
By item 2 in Remark \ref{pro 3.2}, Proposition \ref{prop XQVSM} item 2 and Proposition \ref{pr DIAG tau}, $X(\cdot)$ admits a
 $\chi$-quadratic variation. The proof will be established fixing $\omega\in \Omega$. 
We first suppose that  
\begin{equation}			\label{eq A2}
G(s)=\sum_{i=0}^{N-1} A_{i}\1_{]t_{i},t_{i+1}]}(s)+A_{0}\1_{\{0\} }(s) ,
\end{equation}
where, for some positive integer $N\in \mathbb{N}$, 
$0=t_{0} < \ldots  < t_{N}=T$;
$A_{0},\ldots,A_{N}\in \chi$; in particular there are $a_{0},\ldots, a_{N}\in 
D_d([-\tau,0])$ with 
\begin{equation}		\label{eq A3}
A_{i}(dx,dy)=a_{i}(x)\delta_{y}(dx)dy  \quad \textrm{ for all } i\in \{0,\ldots , N\} \; .
\end{equation}
Then \eqref{eq A1} holds by use of Proposition \ref{pr DIAG tau}.\\
To treat the general case we approach a general $G$ by a sequence $\left( G^{n}\right)$ of type \eqref{eq A2}, i.e. 
\begin{equation}		\label{eq A4}
G^{n}(s)=\sum_{i=0}^{N-1} A_{i}^{n}\1_{]t_{i},t_{i+1}]}(s)+A_{0}^{n}\1_{\{0\} }(s)
\end{equation}
where $A_{i}^{n}=G(t_{i})$, $0\leq i\leq (N-1)$, 
$0=t_{0} < \ldots  < t_{N}=T$
 is a an element of subdivisions of $[0,T]$ indexed by $n$ whose mesh goes to zero when $n$ diverges to infinity.
Let $a^{n}_{0},\ldots, a^{n}_{N}\in D([-\tau,0])$ related to 
$A_{0}^{n}, \ldots, A_{N}^{n}$ through relation \eqref{eq A3}. 
Consequently we have 
\begin{equation}		\label{eq A5}
\int_{0}^{T}\prescript{}{\chi}{\langle}G^{n}(s)\, , \, d\widetilde{[X(\cdot)]}_{s} \rangle_{\chi^{\ast}} =
\int_{0}^{\tau} \left(  \int_{x}^{T} g^{n}(s,-x)[X]_{ds-x}  \right) dx
\end{equation}
with $g^{n}(s,x)=\sum_{i=0}^{N-1}a_{i}^{n}(x)\1_{]t_{i},t_{i+1}]}(s)+a_{0}^{n}$.
 In particular $a_{i}^{n}=g(t_{i},\cdot)$.\\
By assumption, for every $s\in [0,T]$ we have 
\[
\lim_{n\rightarrow +\infty} \sup_{x\in [-\tau,0]} \left|  g^{n}(s,x)-g(s,x) \right|=0 \; .
\]
Consequently, for every $x\in [0,\tau]$, by Lebesgue dominated convergence theorem,
\[
\lim_{n\rightarrow +\infty} \int_{x}^{T} \left(g^{n}(s,-x)-g(s,-x) \right)[X]_{ds-x}=0\; .
\]
Moreover 
\[
\left| \int_{x}^{T} \left(g^{n}(s,-x)-g(s,-x) \right)[X]_{ds-x} \right|
\leq
\left( \sup_{n} \|g^{n}\|_{\infty}+\|g\|_{\infty}\right) [X]_{T}  .
 \]
 Again by Lebesgue dominated convergence theorem, the right-hand side of \eqref{eq A5} converges to the right-hand side of \eqref{eq A1} and the result follows.
\end{proof}
\begin{rem}
If $[X]$ is absolutely continuous with respect to Lebesgue, the identities \eqref{eq A1} are still valid with $\chi=Diag([-\tau,0]^{2})$.
\end{rem}
\section{It\^{o} formula}			\label{sec:ito-int}
We need now to formulate the definition of the forward type integral for $B$-valued integrator and $B^{\ast}$-valued integrand, where $B$ is a separable Banach space.
\begin{dfn}		\label{def integ fwd}  		 
Let $(\mathbb{X}_{t})_{t\in[0,T]}$ (respectively $(\mathbb{Y}_{t})_{t\in[0,T]}$) be a
$B$-valued (respectively a $B^{\ast}$-valued) stochastic process. 
We suppose $\mathbb{X}$ to be continuous and $\mathbb{Y}$ to be strongly measurable 
such that $\int_{0}^{T}\|\mathbb{Y}_{s}\|_{B^{\ast}} ds <+\infty$ a.s.
For every fixed $t\in [0,T]$ we define the {\bf definite forward integral of $\mathbb{Y}$ with respect
to $\mathbb{X}$} denoted by $\int_{0}^{t}\prescript{}{B^{\ast}}{\langle} \mathbb{Y}_{s}, d^{-}\mathbb{X}_{s}\rangle_{B}$ as the following limit in probability:
\begin{equation}		\label{def INTFWD}
\int_{0}^{t} \prescript{}{B^{\ast}}{\langle} \mathbb{Y}_{s}, d^{-}\mathbb{X}_{s}\rangle_{B}: =\lim_{\epsilon\rightarrow
0}\int_{0}^{t} \prescript{}{B^{\ast}}{\langle} \mathbb{Y}_{s},\frac{\mathbb{X}_{s+\epsilon}-\mathbb{X}_{s}}{\epsilon}\rangle_{B} ds		\; .
\end{equation}
We say that the {\bf forward stochastic integral of $\mathbb{Y}$ with respect
to $\mathbb{X}$} exists if the process
\[
\left(\int_{0}^{t} \prescript{}{B^{\ast}}{\langle} \mathbb{Y}_{s}, d^{-}\mathbb{X}_{s}\rangle_{B}\right)_{t\in[0,T]}
\]
admits a continuous version. In the sequel indices $B$ and $B^{\ast}$ will  often be omitted.
\end{dfn}

We are now able to state an It\^{o} formula for stochastic processes with values in a general separable Banach space.
\begin{thm}  			 \label{thm ITONOM}
Let $\chi$ be a Chi-subspace 
and $\X$ a $B$-valued continuous process
admitting a $\chi$-quadratic variation.
Let $F:[0,T]\times B\longrightarrow \mathbb{R}$  Fr\'echet of class $C^{1,2}$
such that $D^{2}F(t,\eta)\in \chi$ for all $t\in [0,T]$ and $\eta\in C([-T,0])$ and 
$	
D^{2}F:[0,T]\times B\longrightarrow \chi$ is continuous.\\
Then for every $t\in [0,T]$ the forward integral 
\[
\int_{0}^{t}\prescript{}{B^{\ast}} {\langle} DF(s,\X_{s}),d^{-}\X_{s}\rangle_{B}
\] 
exists and the following formula holds.
\begin{equation}					\label{eq ITONOM}
F(t,\X_{t})=F(0, \X_{0})+\int_{0}^{t}\partial_{t}F(s,\X_{s})ds+\int_{0}^{t}\prescript{}{B^{\ast}}{\langle} DF(s,\X_{s}),d^{-}\X_{s}\rangle_{B} 
+\frac{1}{2}\int_{0}^{t} \prescript{}{\chi}{\langle} D^{2}F(s,\X_{s}),
d\widetilde{[\X]}_{s}\rangle_{\chi^{\ast}}. 
\end{equation}
\end{thm}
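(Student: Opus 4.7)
\vspace{1em}

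\noindent\textbf{Proof proposal.} The plan is to start from the Riemann-type regularization identity
\begin{equation*}
F(t,\X_t)-F(0,\X_0)=\lim_{\epsilon\to 0}\frac{1}{\epsilon}\int_0^t\bigl[F(s+\epsilon,\X_{s+\epsilon})-F(s,\X_s)\bigr]\,ds,
\end{equation*}
which follows from the continuity of $s\mapsto F(s,\X_s)$ and a change of variable (the regularization approximates the boundary values $F(t,\X_t)$ and $F(0,\X_0)$). Then I would split each integrand as
\begin{equation*}
F(s+\epsilon,\X_{s+\epsilon})-F(s,\X_s)=\bigl[F(s+\epsilon,\X_{s+\epsilon})-F(s,\X_{s+\epsilon})\bigr]+\bigl[F(s,\X_{s+\epsilon})-F(s,\X_s)\bigr]
\end{equation*}
and apply a second-order Taylor expansion (with integral remainder) in the second variable to the second bracket. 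This decomposes $\frac{1}{\epsilon}\int_0^t[\cdots]\,ds$ as $A_1(\epsilon)+A_2(\epsilon)+A_3(\epsilon)+A_4(\epsilon)$, corresponding respectively to the time-derivative term, the first-order Fr\'echet term, the quadratic term, and the Taylor remainder.

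The first two terms are essentially routine. For $A_1(\epsilon)=\frac{1}{\epsilon}\int_0^t\int_s^{s+\epsilon}\partial_tF(u,\X_{s+\epsilon})\,du\,ds$, the continuity of $\partial_tF$ together with the continuity of $\X$ yields convergence to $\int_0^t\partial_tF(u,\X_u)\,du$ uniformly on compacts. The term
\begin{equation*}
A_2(\epsilon)=\int_0^t\prescript{}{B^*}{\langle} DF(s,\X_s),\tfrac{\X_{s+\epsilon}-\X_s}{\epsilon}\rangle_B\,ds
\end{equation*}
is, by the very Definition \ref{def integ fwd}, what should converge in probability to $\int_0^t\langle DF(s,\X_s),d^-\X_s\rangle_B$; here existence of this limit is not assumed a priori, but will be deduced as a byproduct, since all the other three terms converge.

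The key step is the quadratic term
\begin{equation*}
A_3(\epsilon)=\tfrac{1}{2}\int_0^t\prescript{}{\chi}{\langle} D^2F(s,\X_s),\tfrac{(\X_{s+\epsilon}-\X_s)^{\otimes 2}}{\epsilon}\rangle_{\chi^*}\,ds.
\end{equation*}
Here I would invoke Corollary \ref{pr CONVCCOV} applied to the continuous $\chi$-valued process $\mathbb{H}(s):=D^2F(s,\X_s)$. To fit the hypothesis, I need $\mathbb{H}$ to take values in a closed \emph{separable} subspace $\mathcal{V}$ of $\chi$; this holds because the image $\mathbb{H}([0,t])$ is compact in $\chi$ (continuous image of a compact set), so the closed linear span of this image is a closed separable subspace of $\chi$ containing $\mathbb{H}$. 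Then Corollary \ref{pr CONVCCOV} delivers precisely the convergence to $\frac{1}{2}\int_0^t\prescript{}{\chi}{\langle}D^2F(s,\X_s),d\widetilde{[\X]}_s\rangle_{\chi^*}$.

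The main obstacle will be showing that the Taylor remainder
\begin{equation*}
A_4(\epsilon)=\frac{1}{\epsilon}\int_0^t\int_0^1(1-a)\prescript{}{\chi}{\langle}D^2F(s,\X_s+a(\X_{s+\epsilon}-\X_s))-D^2F(s,\X_s),(\X_{s+\epsilon}-\X_s)^{\otimes 2}\rangle_{\chi^*}\,da\,ds
\end{equation*}
vanishes in probability. One bounds $|A_4(\epsilon)|$ by $\omega_{D^2F}(\epsilon)\cdot\frac{1}{\epsilon}\int_0^T\|(\X_{s+\epsilon}-\X_s)^{\otimes 2}\|_{\chi^*}\,ds$, where $\omega_{D^2F}(\epsilon)$ is a (random) modulus of continuity of $D^2F$ (viewed as a $\chi$-valued map) on the compact tube $\{(s,y):s\in[0,T],\ \|y-\X_s\|_B\le\sup_{|u-v|\le\epsilon}\|\X_u-\X_v\|_B\}$. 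The modulus tends to zero a.s., while Condition \textbf{H1} of Definition \ref{def CHICOV}, combined with a subsequence argument as in Remark \ref{rem CSH1}, controls the integral factor on a suitable subsequence. Putting the four pieces together and identifying the limit of $A_2(\epsilon)$ by difference yields both the existence of the forward integral $\int_0^t\langle DF(s,\X_s),d^-\X_s\rangle_B$ and the announced formula \eqref{eq ITONOM}.
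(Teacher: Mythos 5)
Your proposal is correct and follows essentially the same route as the paper's proof: the same regularization identity, the same Taylor splitting into a time term, a first-order term identified by difference, a quadratic term handled by Corollary \ref{pr CONVCCOV}, and a remainder controlled by a modulus of continuity of $D^2F$ on a compact tube together with Condition \textbf{H1}. The only point to adjust is your separability justification for $A_3(\epsilon)$: the closed separable subspace $\mathcal{V}$ in Corollary \ref{pr CONVCCOV} must be deterministic, so rather than taking the closed span of the ($\omega$-dependent) compact image of a single trajectory, argue as the paper does that $D^2F([0,T]\times B)$ is contained in a separable closed subspace of $\chi$ because $[0,T]\times B$ is separable and $D^2F$ is continuous.
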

\begin{rem} \label{Roperational}
The statement of Theorem \ref{thm ITONOM} induces  some operational comments.
The Chi-subspace $\chi$ of $(B\hat{\otimes}_{\pi}B)^{\ast}$ constitutes a degree of freedom in the statement of 
It\^o formula. In order to find the suitable expansion for $F(t,\X_{t})$ we may proceed as follows.
\begin{itemize}
\item Let $F:[0,T]\times B \longrightarrow \R$ of class $C^{1,1}([0,T]\times B )$ we compute the second order 
derivative $D^{2}F$ if it exists.
\item We look for the existence of a Chi-subspace $\chi$ 
for which the range of 
$D^{2}F:[0,T]\times B\longrightarrow (B\hat{\otimes}_{\pi}B)^{\ast}$ is included in $\chi$ and it is continuous with respect to the 
topology of $\chi$.
\item We verify that $\X$ admits a $\chi$-quadratic variation.
\end{itemize}
We observe that whenever $\X$ admits a global quadratic variation, i.e.
a $\chi$-quadratic variation with   $\chi=(B\hat{\otimes}_{\pi}B)^{\ast}$, 
the condition on $F$ to be checked is that it belongs to 
$ C^{1,2}([0,T]\times B )$. 
When $\X$ is a semimartingale (or more generally a semilocally summable $B$-valued process with respect to the tensor product) then it admits a tensor quadratic variation and in particular previous result 
generalizes the classical It\^o formula in \cite{mp}, Section 3.7.

\end{rem}
\begin{proof} [Proof of Theorem \ref{thm ITONOM}]
We observe that the quantity
\begin{equation}  				\label{eq ABB}
I_{0}(\epsilon,t) =\int_{0}^{t} \frac{F(s+\epsilon, \X_{s+\epsilon})-F(s,\X_{s})}{\epsilon}ds, \ t \in [0,T],
\end{equation}
converges ucp for $\epsilon\rightarrow 0$ to
$F(t, \X_{t})-F(0,\X_{0})$ since $\big( F(s,\X_{s})\big)_{s\geq 0}$ is continuous.
At the same time,   \eqref{eq ABB} 
can be written as the sum of the two terms:
\begin{equation}		\label{eq ABBI1}
I_{1}(\epsilon,t)
=
\int_{0}^{t}\frac{F(s+\epsilon, \X_{s+\epsilon})-F(s,\X_{s+\epsilon})}{\epsilon}ds
\end{equation}
and
\begin{equation}		\label{eq ABBI2}
I_{2}(\epsilon,t)
=
\int_{0}^{t}\frac{F(s, \X_{s+\epsilon})-F(s,\X_{s})}{\epsilon}ds \; , 		\quad \epsilon> 0 , \quad \;  t \; \in [0,T] \; .
\end{equation}
We prove that 
\begin{equation}		\label{eq I1}
I_{1}(\epsilon,\cdot)\longrightarrow  \int_{0}^{\cdot} \partial_{t}F(s,\X_{s})ds
\end{equation}
ucp.
 In fact
\begin{equation}		\label{eq ABC}
I_{1}(\epsilon,t)
= \int_{0}^{t}\partial_{t} F(s,\X_{s+\epsilon})ds+R_{1}(\epsilon,t), t \in [0,T],
\end{equation}
where 
\[
\begin{split}
R_{1}(\epsilon,t)
&
=
\int_{0}^{t}\int_{0}^{1} \left(  \partial_{t}F\big(s+\alpha \epsilon,
\X_{s+\epsilon}\big)-\partial_{t}F(s,\X_{s+\epsilon}) \right) d\alpha ds, \
t \in [0,T]  \; .
\end{split}
\]
For fixed $\omega \in \Omega$ we denote 
by $\mathcal{V}(\omega):=\{ \X_{t}(\omega);\, t\in [0,T]\}$ and 
\begin{equation}		\label{def U}
\mathcal{U}=\mathcal{U}(\omega)=\overline{conv(\mathcal{V}(\omega))},
\end{equation} 
i.e. 
the set $\mathcal{U}$ is the closed convex hull of the compact subset $\mathcal{V}(\omega)$ of $B$. 
For $x \in \Omega$,  we have 
\[ 
\sup_{t\in [0,T]} \vert R_{1}(\epsilon,t)\vert\leq T\,
 \varpi^{[0,T]\times \mathcal{U}}_{\partial_{t}F}(\epsilon),
\] 
where 
$ \varpi^{[0,T]\times \mathcal{U}}_{\partial_{t}F}(\epsilon)$ is the continuity modulus in $\epsilon$ of the application 
$\partial_{t}F:[0,T]\times B \longrightarrow \R$ restricted to $[0,T]\times\mathcal{U}$.
From the continuity of the $\partial_{t}F$ as function from $[0,T]\times B$ to $\R$, it follows that the restriction on $[0,T]\times \mathcal{U}$ is uniformly continuous and 
$ \varpi^{[0,T]\times \mathcal{U}}_{\partial_{t}F}$ is a positive, increasing function on $\mathbb{R}^{+}$ converging to 
$0$ when the argument converges to zero.
In particular we have  proved  that 
$R_{1}(\epsilon,\cdot)\rightarrow 0$ ucp as $\epsilon\rightarrow 0$.\\
On the other hand the first term in \eqref{eq ABC} can be rewritten as 
\[
\int_{0}^{t}\partial_{t}F(s,\X_{s})ds+R_{2}(\epsilon,t)
\]
where $R_{2}(\epsilon,t)\rightarrow 0$ ucp arguing similarly as for $R_{1}(\epsilon,t)$  and so the convergence \eqref{eq I1} is  established.\\
We fix now $t \in [0,T]$.
The second addend $I_{2}(\epsilon,t)$ in \eqref{eq ABBI2}, can be 
approximated by Taylor's expansion and it can be written as the sum of the following three terms:
\[
\begin{split}
I_{21}(\epsilon,t)
&
=\int_{0}^{t}\prescript{}{B^{\ast}}{\langle} DF(s,\X_{s}),\frac{\X_{s+\epsilon}-\X_{s}}{\epsilon}\rangle_{B} ds  \; ,
\\
I_{22}(\epsilon,t)
&
=\frac{1}{2}\int_{0}^{t} \prescript{}{\chi}{\langle} D^{2}F(s,\X_{s}),\frac{(X_{s+\epsilon}-\X_{s})\otimes^{2}}{\epsilon}\rangle_{\chi^{\ast}} ds  \; ,
\\
I_{23}(\epsilon,t)
&
=\int_{0}^{t}\left[ \int_{0}^{1}\alpha  \prescript{}{\chi}{\langle}  D^{2}F\left(s,(1-\alpha)\X_{s+\epsilon}+\alpha \X_{s}\right)-D^{2}F(s,\X_{s}),\frac{ (\X_{s+\epsilon}-\X_{s})\otimes^{2} }{\epsilon}\rangle_{\chi^{\ast}} \,d\alpha  \right]\,ds  \; .
\end{split}
\]
Since $D^{2}F: [0,T] \times B  \longrightarrow \chi$ is continuous and $B$ separable, we observe that the process $H$ defined by
$H_{s}= D^{2}F(s,X_{s})$ takes values in a separable closed subspace $\mathcal{V}$ of $\chi$. Applying Corollary \ref{pr CONVCCOV},
it yields
\[
I_{22}( \epsilon,t)\xrightarrow[\epsilon\rightarrow 0] {\mathbb{P}} \frac{1}{2}\int_{0}^{t}\prescript{}{\chi}{\langle} D^{2}F(s,\X_{s}),d\widetilde{[\X]}_{s}\rangle_{\chi^{\ast}}
\quad {\mbox{for every $t \in [0,T]$.}}
\]
We analyze now $I_{23}( \epsilon, t)$ and we show that $I_{23}( \epsilon, t)\xrightarrow[\epsilon\longrightarrow 0]{\mathbb{P}} 0$. 
In fact we have
\[
\begin{split}
\left| I_{23}(\epsilon, t)\right| & 
\leq
\frac{1}{\epsilon}\int_{0}^{t}\int_{0}^{1} \alpha \left|\prescript{}{\chi}{\langle}  D^{2}F\left(s,(1-\alpha)\X_{s+\epsilon}+\alpha \X_{s}\right)-D^{2}F(s,\X_{s}),(\X_{s+\epsilon}-\X_{s})\otimes^{2}\rangle_{\chi^{\ast}}
\right|\,d\alpha \,ds
\\
&
\leq
\frac{1}{\epsilon}\int_{0}^{t}\int_{0}^{1}\alpha \left\|D^{2}F\left(s,(1-\alpha)\X_{s+\epsilon}+\alpha \X_{s}\right)-D^{2}F(s,\X_{s})\right\|_{\chi}\,\left\|(\X_{s+\epsilon}-\X_{s})\otimes^{2}\right\|_{\chi^{\ast}}\,d\alpha \,ds
\\
&
\leq 
\varpi^{[0,T]\times \mathcal{U}}_{D^{2}F}(\epsilon)
\int_{0}^{t}  \sup_{\|\phi\|_{\chi}\leq 1}
\left| \langle \phi,\frac{(\X_{s+\epsilon}-\X_{s})\otimes^{2}}{\epsilon}\rangle \right| \,ds    \; ,
\end{split}
\]
where $\varpi^{[0,T]\times \mathcal{U}}_{D^{2}F}(\epsilon)$ is the continuity modulus of the application 
$D^{2}F:[0,T]\times B \longrightarrow \chi$ restricted to  $[0,T]\times \mathcal{U}$
where $\mathcal{U}$ is the same random compact set
introduced  in \eqref{def U}.
Again $D^{2}F$ on $[0,T]\times \mathcal{U}$ is uniformly continuous and 
$ \varpi^{[0,T]\times \mathcal{U}}_{D^{2}F}$ is a positive, increasing function on $\mathbb{R}^{+}$ converging to 
$0$ when the argument converges to zero.
Taking into account condition {\bf H1} in the definition of $\chi$-quadratic variation, 
$I_{23}(\epsilon,t)\rightarrow 0$ in probability when $\epsilon$ goes to zero.\\
Since $I_{0}(\epsilon,t)$, $I_{1}(\epsilon,t)$, $I_{22}(\epsilon,t)$ and $I_{23}(\epsilon,t)$ converge in probability for every fixed $t\in [0,T]$, 
it follows that $I_{21}(\epsilon,t)$ converges in probability when $\epsilon\rightarrow 0$. Therefore the forward integral
\[
\int_{0}^{t}\prescript{}{B^{\ast}}{\langle} DF(s,\X_{s}),d^{-}\X_{s}\rangle_{B}
\]
exists by definition.
This in particular implies the It\^o formula \eqref{eq ITONOM}.
\end{proof}
%
%

\section{Applications of It\^o formula for window processes}  \label{Sec6}

\subsection{Some conventions}
\label{S6Conventions}

The scope of this section is to illustrate some applications of our Banach space valued It\^o formula to window processes. 
In this section $D^m$ denotes the classical Malliavin gradient and
$\D^{1,2}\left(L^{2}([0,T]) \right)$ (shortly $\D^{1,2}$) denotes the classical Malliavin-Sobolev space, related
to the case when $X$ is a classical Brownian motion.
For more information the reader may consult for instance
\cite{nualart}. On the other hand $D$ will denote the Fr\'echet
differentiation operator for functionals defined on $B$.   
We go on fixing some notations. Let $0< \tau\leq T$, we set $B=C([-\tau,0])$.
  \begin{nota}	\label{nota2.3bis}
Let  $B = C([-\tau,0])$ and $I$ be a real interval. Consider 
 $F: I \times B \longrightarrow 
\mathbb{R}$ of class $C^{0,1}(I \times B).$
Then,
 for each $t \in I$ and $ \eta \in B$,  $\mu = D u(t,\eta)$
 is a (signed) measure on $[-\tau,0]$. We will simply  denote
$D^\perp u(t,\eta)$ (resp. $D^{\delta_0} u(t,\eta)$)
the quantity which, according to  Notation \ref{nota2.3}, should be
  $(D u(t,\eta))^\perp$ (resp.  $(D u(t,\eta))^{\delta_0}$).
We remark that, for any $t \in I$  and $ \eta \in B$, 
$ D^{\delta_0}  \, F \,(t,\eta) = DF \,(t,\eta)(\{0\})$ and
 $D^\perp \, F(t, \eta) = 
  D F\, (t, \eta) - D^{\delta_0} \,F \,(t,\eta) \delta_0$.
\end{nota}
We go on fixing further conventions.
Let $F:[0,T] \times B \longrightarrow \mathbb{R} $ Fr\'echet of
 class $C^{1,2}([0,T[\times B) \cap C^{0}([0,T]\times B) $. 
We remind that the first order Fr\'echet derivative $DF$ defined on $[0,T[ \times B$ takes values in $B^* \cong \mathcal{M}([-\tau,0])$.
For all $(t,\eta)\in [0,T[\times B$, we will denote by $D_{dx} F(t,\eta )$ the measure defined by
\begin{equation}   \label{eq duality deriv prima}
\prescript{}{\mathcal{M}([-\tau,0])}{\langle} DF(t,\eta), h \rangle_{C([-\tau,0])}=DF(t,\eta)(h)=\int_{[-\tau,0]} h(x)D_{dx} F(t,\eta )\quad \mbox{for every $ h\in C([-\tau,0])$.}
\end{equation}
We remark that the second order Fr\'echet derivative $D^2F$ defined on $[0,T]\times B$ takes values in $L\left(B; B^{\ast} \right) 
\cong\mathcal{B}(B,B)
\cong  \left( B\hat{\otimes}_{\pi} B \right)^{\ast}$.
Recalling \eqref{eq 1.15bis}, if $D^{2}F\,(t, \eta)\in \mathcal{M}([-\tau,0]^{2})$ 
for all $(t,\eta) \in [0,T]\times  B$ (which will happen in most of
 the treated cases), we will denote with $D^{2}_{dx\,dy} F(t, \eta)$ the measure on $[-\tau,0]^{2}$ such that following duality holds for all $g\in C([-\tau,0]^{2})$
\begin{equation}		\label{eq duality deriv seconda}
\prescript{}{\mathcal{M}([-\tau,0]^{2})}{\langle} D^{2}F(t,\eta), g \rangle_{C([-\tau,0]^{2})} =
D^{2}F(t,\eta)(g)  = \int_{[-\tau,0]^{2}} g(x,y)\,D^{2}_{dx\,dy} F(t,\eta) \ .
\end{equation}
 
We conclude the subsection with a notation which concerns deterministic
integrals of real functions.

\begin{nota}		\label{not IBPCONTFUNC}
Let $g, \eta :[a,b]\rightarrow \mathbb{R}$ be c\`adl\`ag. 
We extend $g$ to the real line setting $g(x)=0$ for $x<a$ 
and $g(x)=g(b)$ for $x\geq b$.
 \\
If $g$ has bounded variation, and $a \le c < d \le b$,  we set 
$\int_{]c,d]} 1 d g= g(d) - g(c)$ and $\int_{[c,d]} 1  d g = g(d) - g(c-)$.
Consequently $\int_{[a,b]} 1 d g= g(b)$ since
 $g(a{-})$
 vanishes. Conformally to this convention, if $g: [a,b] \rightarrow \R$ has bounded variation 
and $\eta: [a,b] \rightarrow \R$, is continuous, we denote
\[
\int_{]c,d]}g\, d \eta  =g(d)\eta (d)-g(c)\eta(c)-\int_{]c,d]}\eta\,dg  \; 
\quad \mbox{and} 
\; 
\quad
\int_{[c,d]} g\,d\eta =g(d) \eta(d) -g(c-)\, \eta(c-) 
-\int_{[c,d]}\eta\,dg. 
 \]
For instance 
$
\int_{[a,b]} g\,d\eta = g(b) \eta(b) -  \int_{[a,b]}\eta\,dg. 
$
\end{nota}

\subsection{About anticipative integration with respect to finite quadratic variation process}	\label{6.1}

This section aims at giving one application of 
calculus via regularization for window processes
 to anticipative calculus in a situation in which
neither It\^o nor Malliavin-Skorohod calculus can be applied.
Our methods also produce, as secondary effect,
some identities involving  path-dependent It\^o or Skorohod integrals
 with forward integrals. 
Let $X$ be a 
real finite quadratic variation process such that $X_{0}=0$ a.s.
 and prolonged as usual by continuity to the real line. 
One motivation is to express, for $\tau \in [0,T]$,
 \begin{equation} \label{E61}
\int_{0}^{T-\tau} \left( \int_{y}^{y+\tau} g(X_x,X_y )dx \right)d^{-}X_y,
 \end{equation}
for some smooth enough $g:\R^{2}\longrightarrow \R$. 
\begin{rem} \label{RExpl}
\begin{enumerate}
\item We observe that, even when $X$ is a  semimartingale, previous forward 
integral is not an It\^o integral 
since the integrand is anticipating (non adapted). If $X$ is
 a Brownian motion, it can be 
expressed with the help of Skorohod integral.
\item We observe that \eqref{E61} equals
\begin{equation} \label{E61bis}
 \int_{0}^{T-\tau}\left( \int_{-\tau}^{0} g\left( X_{y+\tau+x},X_{y}\right) dx 
\right)d^{-}X_{y}.
\end{equation}
\end{enumerate}
\end{rem}
In the perspective of evaluating \eqref{E61bis},
we consider $f:\R^{2}\longrightarrow \R$ of class $C^{2}(\R^{2})$ such that $f(x,y)=\int_{0}^{y}g(x,z)dz$. 
In particular $g=\partial_{2}f$.
For this purpose, we start expanding 
\[
\int_{-\tau}^{0}f\left( X_{x+t}, X_{t-\tau}\right)dx 
\]
through our Banach space $B$-valued It\^o formula.
We obtain the following. 
\begin{prop}				\label{prop B}
 Let $f:\R^{2}\longrightarrow \R$ be a function of class $C^{2}$.
We have 
\begin{equation}	\label{R4}
\begin{split}
\int_{-\tau}^{0}f\left( X_{x+t}, X_{t-\tau}\right)dx
&
=\tau \, f(0,0) 
+\int_{0}^{T} \left( \int_{y}^{(y+\tau)\wedge T} \partial_{1} f\left( X_{y},X_{t-\tau} \right)dt \right)d^{-}X_{y}\\
&\hspace{-3cm}
+\int_{0}^{T-\tau} \left( \int_{-\tau}^{0}\partial_{2}f\left( X_{y+x+\tau},X_{y}\right) dx\right)d^{-}X_{y}
+\frac{1}{2}\int_{0}^{T-\tau} \left( \int_{-\tau}^{0}\partial_{2\,2}^{2}f \left(X_{y+z+\tau},X_{y} \right)dz \right) d[X]_{y} \\
&\hspace{-3cm}
+\frac{1}{2}\int_{-\tau}^{0}\left( \int_{-x}^{T} \partial_{1\,1}^{2}f \left(X_{t+x},X_{t-\tau}\right)\, [X]_{dt+x} \right) dx, 
\end{split}
\end{equation}
provided that at least one of the two forward integrals above exists.
\end{prop}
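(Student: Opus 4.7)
The approach is to apply the Banach-valued It\^o formula of Theorem \ref{thm ITONOM} to the (time-independent) $C^{2}$ functional $F\colon B\to\R$, with $B=C([-\tau,0])$, defined by
$$F(\eta):=\int_{-\tau}^{0}f(\eta(x),\eta(-\tau))\,dx,$$
along the window process $\X_{t}:=X_{t}(\cdot)$. With the prolongation $X_{s}=0$ for $s\le 0$ one has $\X_{0}\equiv 0$, so $F(\X_{0})=\tau f(0,0)$, while $F(\X_{T})=\int_{-\tau}^{0}f(X_{T+x},X_{T-\tau})dx$ is the left-hand side of \eqref{R4}. A direct Fr\'echet computation yields
$$D_{dx}F(\eta)=\partial_{1}f(\eta(x),\eta(-\tau))\,dx+\Bigl(\int_{-\tau}^{0}\partial_{2}f(\eta(z),\eta(-\tau))\,dz\Bigr)\delta_{-\tau}(dx)$$
in $\mathcal{M}([-\tau,0])$, and $D^{2}_{dx\,dy}F(\eta)$ splits into four summands: a density $\partial^{2}_{11}f(\eta(x),\eta(-\tau))$ on the diagonal $\{x=y\}$, two mixed Lebesgue--Dirac-at-$-\tau$ densities carrying $\partial^{2}_{12}f$ and $\partial^{2}_{21}f$, and the point mass $\delta_{-\tau}(dx)\delta_{-\tau}(dy)$ with coefficient $\int_{-\tau}^{0}\partial^{2}_{22}f(\eta(z),\eta(-\tau))\,dz$.

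Accordingly, the natural Chi-subspace containing $D^{2}F(\eta)$ for every $\eta\in B$ is
$$\chi:=Diag_{c}([-\tau,0]^{2})\oplus\bigl(L^{2}([-\tau,0])\hat{\otimes}_{h}\mathcal{D}_{-\tau}\bigr)\oplus\bigl(\mathcal{D}_{-\tau}\hat{\otimes}_{h}L^{2}([-\tau,0])\bigr)\oplus\mathcal{D}_{-\tau,-\tau},$$
and $\eta\mapsto D^{2}F(\eta)$ is continuous from $B$ into $\chi$ thanks to $f\in C^{2}(\R^{2})$ and the boundedness of $\eta$ on $[-\tau,0]$. By Proposition \ref{prop somma diretta di qv}, $\X=X(\cdot)$ admits a $\chi$-quadratic variation as soon as it admits one on each summand: Proposition \ref{pr DIAG tau} gives it on $Diag_{c}$ (hypothesis \eqref{eq CSBVCOV} with $X=Y$ reduces to tightness of $\epsilon^{-1}\int_{0}^{T}(X_{s+\epsilon}-X_{s})^{2}ds$, immediate from $X$ having finite quadratic variation); Proposition \ref{pr QV123} item 2 and Corollary \ref{cor DIAG} item 1 give vanishing $\chi$-covariations on the two mixed summands; and Proposition \ref{pr QV123} item 3 provides it on $\mathcal{D}_{-\tau,-\tau}$, since $[X_{\cdot-\tau}]=[X]_{\cdot-\tau}$ exists.

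Theorem \ref{thm ITONOM} then furnishes
$$F(\X_{T})-F(\X_{0})=\int_{0}^{T}\prescript{}{B^{\ast}}{\langle} DF(\X_{s}),d^{-}\X_{s}\rangle_{B}+\tfrac{1}{2}\int_{0}^{T}\prescript{}{\chi}{\langle} D^{2}F(\X_{s}),d\widetilde{[\X]}_{s}\rangle_{\chi^{\ast}}.$$
In the second order term the two mixed summands contribute nothing; the $\delta_{-\tau}\otimes\delta_{-\tau}$ component, by Proposition \ref{pr QV123} item 3 and the substitution $y=s-\tau$ (using $[X]_{y}=0$ for $y\le 0$), yields exactly the fourth term on the right-hand side of \eqref{R4}; the diagonal component, by Proposition \ref{prop 5GT6} applied with $g(s,x)=\partial^{2}_{11}f(X_{s+x},X_{s-\tau})$ and the sign change $x\mapsto -x$, yields the last term.

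The main obstacle is to identify the single Banach-valued forward integral above with the sum of the two scalar forward integrals in \eqref{R4}. Its pre-limit naturally splits as $A_{\epsilon}+B_{\epsilon}$ according to the Lebesgue and Dirac-at-$-\tau$ parts of $DF$. For $B_{\epsilon}$, the change of variable $u=s-\tau$ sends $s\in[0,T]$ onto $u\in[-\tau,T-\tau]$; on $u\in[-\tau,0]$ one uses $X_{u}=0$ and the continuity of $X$ at $0$ to control the contribution by a constant times $\sup_{t\in[0,\epsilon]}|X_{t}|$, which vanishes. The remainder is the pre-limit of $\int_{0}^{T-\tau}\bigl(\int_{-\tau}^{0}\partial_{2}f(X_{y+x+\tau},X_{y})dx\bigr)d^{-}X_{y}$. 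For $A_{\epsilon}$, a Fubini exchange and the change of variable $t=y-x$ recast the integrand on the region $\{(y,t)\colon 0\le y\le T,\ y\le t\le(y+\tau)\wedge T\}$, up to a vanishing boundary contribution for $y$ close to $0$; this is the pre-limit of $\int_{0}^{T}\bigl(\int_{y}^{(y+\tau)\wedge T}\partial_{1}f(X_{y},X_{t-\tau})dt\bigr)d^{-}X_{y}$. Since $A_{\epsilon}+B_{\epsilon}$ converges by the It\^o formula, the assumed existence of one of the two scalar forward integrals forces the other to exist as the difference, and their sum reconstructs the Banach-valued forward integral, completing \eqref{R4}.
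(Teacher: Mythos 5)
Your proposal is correct and follows essentially the same route as the paper: the same functional $F(\eta)=\int_{-\tau}^{0}f(\eta(x),\eta(-\tau))\,dx$, the same computation of $DF$ and $D^{2}F$, the same Chi-subspace decomposition $Diag\oplus(L^{2}\hat{\otimes}_{h}\mathcal{D}_{-\tau})\oplus(\mathcal{D}_{-\tau}\hat{\otimes}_{h}L^{2})\oplus\mathcal{D}_{-\tau,-\tau}$ handled via Propositions \ref{pr QV123}, \ref{pr DIAG tau}, \ref{prop somma diretta di qv} and \ref{prop 5GT6}, and the same splitting of the Banach-valued forward integral into the two scalar forward integrals by Fubini and change of variables. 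The only cosmetic deviation is your use of $Diag_{c}$ in place of $Diag$, which is harmless since it is a closed subspace.
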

\begin{rem}	\label{remA}
If $X$ is an $(\mathcal{F}_{t})$-semimartingale the forward integral 
\be		\label{eq J1}
\int_{0}^{T} \left( \int_{y}^{(y+\tau)\wedge T} \partial_{1} f\left( X_{y},X_{t-\tau} \right)dt \right)d^{-}X_{y}
\ee
coincides with the It\^o integral 
\[
\int_{0}^{T} \left( \int_{y}^{(y+\tau)\wedge T} \partial_{1} f\left( X_{y},X_{t-\tau} \right)dt \right)dX_{y}   .
\]
\end{rem}

\begin{proof}[Proof of Proposition \ref{prop B}]
We will apply Theorem \ref{thm ITONOM} to $F\left( X_{t}(\cdot) 
 \right)$ where $F:C([-\tau,0])\longrightarrow \R$ is the functional defined by 
$
F(\eta)= \int_{-\tau}^{0} f\left( \eta(x),\eta(-\tau) \right) dx 
$ which is of class $C^{2}(B)$. Below we express the first derivative as
\[
D_{dx}F\left( \eta \right)= \partial_{1}f\left(\eta(x),\eta(-\tau) \right) \1_{[-\tau,0]}(x)dx+\int_{-\tau}^{0}\partial_{2}f\left( \eta(z),\eta(-\tau) \right)dz \; \delta_{-\tau}(dx)  \; 
\]
and the second derivative as
\begin{equation}
\begin{split}
D^{2}_{dx,\, dy} F(\eta)&
= 
\partial_{1\,1}^{2}f\left( \eta(x),\eta(-\tau)\right)\1_{[-\tau,0]}(x) \delta_{y}(dx)\, dy
+
\partial_{2\,1}^{2}f\left( \eta(x), \eta(-\tau) \right)\delta_{-\tau}(dx)\,\1_{[-\tau,0]}(y)dy\\
&
+\partial_{1\,2}^{2}f\left( \eta(x), \eta(-\tau) \right)\1_{[-\tau,0]}(x)dx \, \delta_{-\tau}(dy)+
\int_{-\tau}^{0}\partial_{2\,2}^{2}f\left( \eta(z), \eta(-\tau) \right) dz\; \delta_{-\tau}(dx)\,\delta_{-\tau}(dy) \, . \label{eq 2DER}
\end{split}
\end{equation}
The second order Fr\'echet derivative $D^{2}F(\eta)$ belongs to $\chi$ with $\chi:=Diag\oplus \mathcal{D}_{-\tau}\otimes_{h}L^{2}\oplus L^{2}\otimes_{h}\mathcal{D}_{-\tau}\oplus \mathcal{D}_{-\tau,-\tau}$.
Since $X$ is a finite quadratic variation process, Propositions \ref{pr QV123}, \ref{pr DIAG tau} and \ref{prop somma diretta di qv} imply that 
$X(\cdot)$ admits a $\chi$-quadratic variation. 
We apply now Theorem \ref{thm ITONOM} to $F\left( X_{T}(\cdot)\right)$.  
The forward integral appearing in the It\^o formula $$I_{1}:=\int_{0}^{T} \langle DF(X_{t}(\cdot)) \, , \, d^{-}X_{t}(\cdot) \rangle$$ exists and 
it is given by
$
I_{11}+I_{12}
$
where 
\[
\begin{split}
I_{11}&
=\lim_{\epsilon\rightarrow 0} \int_{0}^{T}\int_{-\tau}^{0} \partial_{1}f\left( X_{t+x},X_{t-\tau} \right)\frac{X_{t+x+\epsilon}-X_{t+x}}{\epsilon}dx\, dt  \quad \mbox{ and}\\
I_{12}&
=\lim_{\epsilon\rightarrow 0}\int_{0}^{T} \left( \int_{-\tau}^{0}\partial_{2}f\left( X_{t+x},X_{t-\tau}\right) dx\right) \frac{X_{t-\tau+\epsilon}-X_{t-\tau}}{\epsilon} dt  ,
\end{split}
\]
provided that previous limits in probability exist. We have 
\[
\begin{split}
I_{11}&
=
\lim_{\epsilon\rightarrow 0} \int_{0}^{T}\int_{(-\tau)\vee (-t)}^{0} \partial_{1}f\left( X_{t+x},X_{t-\tau} \right)\frac{X_{t+x+\epsilon}-X_{t+x}}{\epsilon}dx\, dt \\
&
=
\lim_{\epsilon\rightarrow 0} \int_{0}^{T}\int_{(t-\tau)\vee (0)}^{t} \partial_{1}f\left( X_{y},X_{t-\tau} \right)\frac{X_{y+\epsilon}-X_{y}}{\epsilon}dy\, dt \; .
\end{split}
\]
By Fubini's theorem, previous limit equals \eqref{eq J1}, provided that previous forward limit exists.\\
We go on specifying $I_{12}.$
\[
\begin{split}
I_{12}
&
=\lim_{\epsilon\rightarrow 0}\int_{\tau}^{T} \left( \int_{-\tau}^{0}\partial_{2}f\left( X_{t+x},X_{t-\tau}\right) dx \right) \frac{X_{t-\tau+\epsilon}-X_{t-\tau}}{\epsilon} dt \\
&
=
\lim_{\epsilon\rightarrow 0}\int_{0}^{T-\tau} \left( \int_{-\tau}^{0}\partial_{2}f\left( X_{y+x+\tau},X_{y}\right) dx\right)  \frac{X_{y+\epsilon}-X_{y}}{\epsilon} dy \\
&
=
\int_{0}^{T-\tau} \left( \int_{-\tau}^{0}\partial_{2}f\left( X_{y+x+\tau},X_{y}\right) dx\right)d^{-}X_{y},
\end{split}
\]
provided that previous forward integral exists.\\
We evaluate now the integrals involving the second order derivative of $F$, i.e. 
\begin{equation}		\label{eq EF11}
\frac{1}{2}\int_{0}^{T}\prescript{}{\chi}{\langle} D^{2}F(X_{t}(\cdot))\, ,\, d\widetilde{[X(\cdot)]}_{t}\rangle_{\chi^{\ast}} \; .
\end{equation}
We remind that $D^{2}F(\eta)$ takes values in $\chi:=Diag\oplus \mathcal{D}_{-\tau}\otimes_{h}L^{2}\oplus L^{2}\otimes_{h}\mathcal{D}_{-\tau}\oplus \mathcal{D}_{-\tau,-\tau}$.
The term \eqref{eq EF11} splits into a sum of four terms. 
Since by Proposition \ref{pr QV123} item 2, $X(\cdot)$ has zero 
$\mathcal{D}_{-\tau}\otimes_{h}L^{2}$ and $L^{2}\otimes_{h}\mathcal{D}_{-\tau}$-quadratic variation, the
only non vanishing integrals are the two terms $I_{21}$ and 
$I_{22}$ given 
respectively by the $\mathcal{D}_{-\tau,-\tau}$ and the $Diag$-quadratic variation. 
Again by Proposition \ref{pr QV123} item 3, expression \eqref{eq EF11} becomes $I_{21}+I_{22}$ where 
\[
I_{21}
=\frac{1}{2}\int_{0}^{T-\tau} \int_{-\tau}^{0}\partial_{2\,2}^{2}f \left(X_{y+z+\tau},X_{y} \right)dz \, d[X]_{y}  \; , \quad  \quad
I_{22}
=\frac{1}{2}\int_{0}^{T}\prescript{}{Diag}{\langle} G(t)\, ,\, d\widetilde{[X(\cdot)]}_{t}\rangle_{Diag^{\ast}}   
\]
and $G(t)=g(t,x)\delta_{y}(dx)dy$, with $g(t,x)=\partial_{1\,1}^{2}f \left(X_{t+x},X_{t-\tau}\right) $. Since $\partial_{1\, 1}^{2}f$ is a continuous function, Proposition \ref{prop 5GT6} 
can be applied and we get 
\[
I_{22}=\frac{1}{2}\int_{-\tau}^{0}\left( \int_{-x}^{T} \partial_{1\,1}^{2}f \left(X_{t+x},X_{t-\tau}\right)\, [X]_{dt+x} \right) dx  \; .
\]
In conclusion we obtain \eqref{R4}.
\end{proof}

\begin{cor}	
Let $X$ be an $(\mathcal{F}_{t})$-semimartingale and
 $g:\R^{2}\longrightarrow \R$ of class $C^{2,1}(\R\times \R)$.
 Then, setting 
  $f(x,y)=\int_{0}^{y}g(x,z)dz $,
 the forward integral 
$\int_{0}^{T-\tau}\left( \int_{-\tau}^{0} g\left( X_{y+\tau+x},X_{y}\right) dx
 \right)d^{-}X_{y} $ exists and it can be explicitly given using \eqref{R4}
and the relation $\partial_2 f = g$.
\end{cor}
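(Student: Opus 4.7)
The plan is to apply Proposition \ref{prop B} with a judiciously chosen antiderivative $f$ of $g$, and then use the assumption that $X$ is a semimartingale to verify the existence hypothesis required in that proposition. Define
\[
f(x,y):=\int_{0}^{y}g(x,z)\,dz, \qquad (x,y)\in \R^{2}.
\]
Since $g\in C^{2,1}(\R\times \R)$, computing partial derivatives under the integral sign yields $\partial_{2}f=g$, $\partial_{1}f(x,y)=\int_{0}^{y}\partial_{1}g(x,z)dz$, $\partial_{11}^{2}f(x,y)=\int_{0}^{y}\partial_{11}^{2}g(x,z)dz$, $\partial_{12}^{2}f=\partial_{1}g$ and $\partial_{22}^{2}f=\partial_{2}g$; all are continuous so $f\in C^{2}(\R^{2})$, and Proposition \ref{prop B} applies, conditionally to the existence of one of the two forward integrals on its right-hand side.

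The first step is to show that the forward integral
\[
J:=\int_{0}^{T}\left(\int_{y}^{(y+\tau)\wedge T}\partial_{1}f(X_{y},X_{t-\tau})\,dt\right)d^{-}X_{y}
\]
exists. By Remark \ref{remA}, since $X$ is an $(\mathcal{F}_{t})$-semimartingale, it suffices to check that the integrand
\[
\Phi_{y}:=\int_{y}^{(y+\tau)\wedge T}\partial_{1}f(X_{y},X_{t-\tau})\,dt
\]
defines an $(\mathcal{F}_{t})$-adapted, pathwise continuous process: indeed for $t\in [y,(y+\tau)\wedge T]$ one has $t-\tau\le y$, hence $\Phi_{y}$ is $\mathcal{F}_{y}$-measurable, and continuity in $y$ follows from the continuity of $X$ together with that of $\partial_{1}f$ via dominated convergence on the compact set $\mathcal{U}=\{X_{s}(\omega):s\in[0,T]\}$. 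Thus $J$ exists as an It\^o integral.

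The second step is simply to invoke Proposition \ref{prop B} with this $f$: since $J$ exists, the remaining forward integral
\[
K:=\int_{0}^{T-\tau}\!\left(\int_{-\tau}^{0}\partial_{2}f(X_{y+\tau+x},X_{y})\,dx\right)d^{-}X_{y}=\int_{0}^{T-\tau}\!\left(\int_{-\tau}^{0}g(X_{y+\tau+x},X_{y})\,dx\right)d^{-}X_{y}
\]
exists as well, and one obtains the explicit identity
\begin{equation*}
\begin{split}
K&=\int_{-\tau}^{0}f(X_{x+T},X_{T-\tau})\,dx-\tau f(0,0)-J\\
&\quad-\frac{1}{2}\int_{0}^{T-\tau}\!\left(\int_{-\tau}^{0}\partial_{2}g(X_{y+z+\tau},X_{y})\,dz\right)d[X]_{y}-\frac{1}{2}\int_{-\tau}^{0}\!\left(\int_{-x}^{T}\partial_{11}^{2}f(X_{t+x},X_{t-\tau})\,[X]_{dt+x}\right)dx,
\end{split}
\end{equation*}
where $J$ can in turn be rewritten as the classical It\^o integral $\int_{0}^{T}\Phi_{y}\,dX_{y}$.

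The only mildly delicate point is verifying the existence of $J$, i.e.\ the adaptedness and pathwise regularity of $\Phi$; the remaining terms on the right-hand side are unambiguously defined Lebesgue and Lebesgue--Stieltjes integrals against $dx$ and $d[X]$, since their integrands are continuous functions of $X$ on the compact random set $\mathcal{U}$ and $[X]$ has bounded variation. This delivers both the existence claim and the announced explicit representation.
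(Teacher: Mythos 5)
Your proof is correct and follows essentially the same route as the paper: set $f(x,y)=\int_{0}^{y}g(x,z)\,dz$, note that the first forward integral in Proposition \ref{prop B} is an It\^o integral because its integrand is $(\mathcal{F}_{y})$-adapted (Remark \ref{remA}), and then apply Proposition \ref{prop B} to obtain the existence of the remaining forward integral together with the explicit representation. The additional details you supply (the check that $f\in C^{2}(\R^{2})$ under the $C^{2,1}$ hypothesis on $g$, and the adaptedness/continuity of $\Phi$) are exactly the points the paper leaves implicit.
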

\begin{proof}
The first forward integral in the right-hand side of \eqref{R4}
 exists and it is an It\^o integral. We apply successively
 Proposition \ref{prop B}. 
\end{proof}
\begin{cor}	\label{cor D}
Let $X=W$ be a classical Wiener process, $f\in  C^{2}(\R^{2})$. We have the following identity. 
\[
\begin{split}
\int_{-\tau}^{0}f\left( W_{x+t}, W_{t-\tau}\right)dx
&
=\tau \, f(0,0)
+\int_{0}^{T} \left( \int_{y}^{(y+\tau)\wedge T} \partial_{1} f\left( W_{y},W_{t-\tau} \right)dt \right)dW_{y}\\
&\hspace{-2cm}+\int_{0}^{T-\tau} \left( \int_{-\tau}^{0}\partial_{2}f\left( W_{y+x+\tau},W_{y}\right) dx\right)\delta W_{y} 
+\int_{0}^{T-\tau} \left( \int_{-\tau}^{0} \partial_{2\, 1}^{2}  f \left( W_{t+\tau+z},W_{t}\right) dz \right) dt\\
&\hspace{-2cm}+\frac{1}{2}\int_{0}^{T-\tau} \left( \int_{-\tau}^{0}\partial_{2\,2}^{2}f \left(W_{y+z+\tau},W_{y} \right)dz \right) dy 
+\frac{1}{2}\int_{-\tau}^{0}\left( \int_{-x}^{T} \partial_{1\,1}^{2}f \left(W_{t+x},W_{t-\tau}\right)\, dt  \right) dx.
\end{split}
\]
\end{cor}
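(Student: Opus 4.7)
The plan is to specialize Proposition \ref{prop B} to $X = W$ and convert the remaining forward integrals into the classical It\^o and Skorohod integrals plus appropriate trace corrections. Since $W$ is a continuous $(\mathcal{F}_t)$-semimartingale with $[W]_t = t$, the first forward integral in \eqref{R4} becomes a classical It\^o integral by Remark \ref{remA}, and the two quadratic variation terms in \eqref{R4} simplify using $d[W]_y = dy$ and $[W]_{dt+x} = dt$, yielding directly the two deterministic integrals on the last line of the corollary.

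The heart of the matter is to treat the anticipating forward integral
\[
J \;:=\; \int_{0}^{T-\tau} u_y \, d^-W_y, \qquad u_y \,:=\, \int_{-\tau}^{0} \partial_2 f(W_{y+x+\tau}, W_y) \, dx,
\]
which is not adapted because $u_y$ involves the future values $W_{y+x+\tau}$ for $x \in [-\tau,0]$. The idea is to invoke the classical relation linking the forward and the Skorohod integrals for processes in $\mathbb{D}^{1,2}(L^2([0,T]))$ (see e.g.\ \cite{nualart}): under mild integrability assumptions,
\[
\int_{0}^{T-\tau} u_y \, d^-W_y \;=\; \int_{0}^{T-\tau} u_y \, \delta W_y \;+\; \int_{0}^{T-\tau} D_y^+ u_y \, dy,
\]
where $D_y^+u_y := \lim_{s\downarrow y} D_s u_y$ is the right trace of the Malliavin derivative of $u$.

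A direct computation using the Malliavin chain rule and $D_s W_t = \mathbf{1}_{[0,t]}(s)$ gives
\[
D_s u_y \;=\; \int_{-\tau}^{0} \partial_{1,2}^{2} f(W_{y+x+\tau}, W_y) \mathbf{1}_{\{s \le y+x+\tau\}} dx \;+\; \int_{-\tau}^{0} \partial_{2,2}^{2} f(W_{y+x+\tau}, W_y) \mathbf{1}_{\{s \le y\}} dx .
\]
Letting $s \downarrow y$, the second indicator vanishes (since $s>y$), while the first becomes identically one on $[-\tau,0]$ because $y+x+\tau \ge y$ for every $x \in [-\tau,0]$. Hence
\[
D_y^+ u_y \;=\; \int_{-\tau}^{0} \partial_{2,1}^{2} f(W_{y+\tau+z}, W_y) \, dz ,
\]
which is precisely the third integral in the corollary. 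Substituting these transformations into \eqref{R4} for $X = W$ yields the announced identity.

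The main technical point is to verify that $u$ belongs to $\mathbb{D}^{1,2}(L^2([0,T-\tau]))$ and satisfies the regularity hypotheses needed to apply the forward-to-Skorohod conversion. Under the assumption $f \in C^2(\mathbb{R}^2)$ with bounded first and second derivatives this is immediate; the general case $f \in C^2$ follows by a standard localization argument (approximating $f$ by smooth truncations and using the local character of both the forward integral and the Skorohod integral on the stopping time at which $\sup_{t\le T}|W_t|$ exceeds a given threshold).
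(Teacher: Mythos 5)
Your proposal is correct and follows essentially the same route as the paper: specialize Proposition \ref{prop B} to $X=W$, use Remark \ref{remA} and $[W]_t=t$ for the adapted and quadratic-variation terms, and convert the anticipating forward integral into a Skorohod integral plus the trace term $\int_{0}^{T-\tau}\bigl(\int_{-\tau}^{0}\partial^2_{2\,1}f(W_{y+\tau+z},W_y)\,dz\bigr)dy$ via the forward--Skorohod relation (the paper's Remark \ref{rem D}, where the trace $Tr^-D^mY$ is computed exactly as your pointwise right-limit $D_y^+u_y$). The only cosmetic difference is that you state the trace as $\lim_{s\downarrow y}D_su_y$ while the paper uses the averaged version of \eqref{eq 34T}, and you make explicit the localization needed for general $f\in C^2$, which the paper leaves implicit.
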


\begin{rem}		\label{rem D}
If $Y\in \mathbb{D}^{1,2}\left(L^{2}([0,T]) \right)$, $D^mY$ represents the Malliavin derivative and $\int_{0}^{t}Y_{s}\delta W_{s}$, $t\in [0,T]$, is the Skorohod integral. 
We recall that, by \cite{rv1} and \cite{Rus05}
\begin{equation}	\label{eq 34T}
\int_{0}^{t} Y_{s} d^{-}W_{s}=\int_{0}^{t}Y_{s}\delta W_{s} + \left( Tr^{-}D^m Y\right) (t)  \quad \mbox{ where }
\end{equation}
\[
\left( Tr^{-}D^m Y \right)(t)=\lim_{\epsilon\rightarrow 0}\int_{0}^{t}\left( \int_{s}^{s+\epsilon}\frac{D_{r}^mY_{s}}{\epsilon} dr \right) ds  \quad \mbox{in $L^{2}(\Omega)$.}
\]
\end{rem}
\begin{proof}  [Proof of Corollary \ref{cor D}]
It follows from Proposition \ref{prop B} provided  we prove that
\[
\int_{0}^{T-\tau} \left( \int_{-\tau}^{0}\partial_{2}f\left( W_{y+x+\tau},W_{y}\right) dx\right)d^{-}W_{y}  \quad \mbox{equals}
\]
\[
\int_{0}^{T-\tau} \left( \int_{-\tau}^{0}\partial_{2}f\left( W_{y+x+\tau},W_{y}\right) dx\right)\delta W_{y}+\int_{0}^{T-\tau} \left( \int_{-\tau}^{0} \partial_{2\, 1}^{2}  f \left( W_{t+\tau+z},W_{t}\right) dz \right) dt
  \; .
\]
This follows by Remark \ref{rem D} with 
\[
Y_{s}=\int_{-\tau}^{0}\partial_{2}f \left( W_{s+\tau+z}, W_{s}\right) dz  \; .
\]
In fact, for $r>s$, $D^m_{r}Y_{s}=\int_{r-s-\tau}^{0} \partial_{2\, 1}^{2} f \left( W_{s+\tau+z},W_{s}\right) dz$ and so 
\begin{equation}		\label{eq 34R}
\left(Tr^{-}D^m Y \right)(t)= \lim_{r\downarrow s} \int_{0}^{t} D^m_{r}Y_{s} \, ds=
\int_{0}^{t}\left(  \int_{-\tau}^{0} \partial_{2\, 1}^{2} f 
\left(W_{s+\tau+z},W_{s}\right) dz\right) ds.
\end{equation}
Combining \eqref{eq 34R} with \eqref{eq 34T} for $t=T-\tau$ the result is now established.
\end{proof}

\begin{rem}		\label{rem E}
Another example of exploitation of Proposition \ref{prop B}
 arises when $X$ is a Gaussian centered process with covariance 
$
R(s,t)=\mathbb{E}\left[ X_{s}X_{t}\right]
$
such that $\frac{\partial^{2} R}{\partial{s} \partial{t}} $ is a 
signed finite measure $\mu$. 
We say in this case that the covariance of
$X$ has a measure structure, see \cite{rkt}. We remind that in this case $X$ is a finite quadratic variation process and 
$
[X]_{t}=\mu(\{(s,s) | s\in [0,t] \})$. With some slight technical assumptions, the following relation holds:  
\begin{equation}		\label{eq ERE}
\int_{0}^{t} Y_{s} d^{-}X_{s} =\int_{0}^{t} Y_{s}\delta X_{s}+ \int_{[0,t]^{2}}
D^m_{r{+}}Y_{s} d\mu(r,s)
 \; .
\end{equation}
This allows to show the existence of both the forward integrals in the statement of Proposition \ref{prop B} using \eqref{eq ERE}.
\end{rem}

\subsection{Infinite dimensional partial differential equation 
and Clark-Ocone type results}
\label{SIDPDE}

As motivated in the introduction, just after the definition
of window processes, one natural application consists in obtaining a
 \emph{Clark-Ocone type formula} for real finite quadratic variation  
processes. Let   $X$ be  a continuous process 
such that $[X,X]_t \equiv \sigma^2 t$ for some $\sigma \ge 0$.
and we assume again $X_0=0$ 
for simplicity. 
Consider $h= \phi(X_T)$ and let $\shu:[0,T] \times \R \rightarrow \R$
be a solution of $\partial_t \shu_t + \frac{\sigma^2}{2} \partial_{xx} \shu  = 0$ 
with final condition $\shu(T,x) = \phi(x)$ for some real Borel 
non-negative function $\phi$.
  By  It\^o formula \eqref{FITO}, we get
that 
\begin{equation} \label{EEE10bis}
 h =  h_0 + \int_0^t \xi_s d^-X_s,
\end{equation} 
where $\xi_s \equiv \partial_x \shu(s,X_s)$ 
and $h_0= \shu(0,X_0), $ see also \cite{crnsm2}
and references therein.
The integral in \eqref{EEE10bis} is indeed an improper forward integral.
If  $h$ is a path dependent random variable, 
we can express it 
 as a functional
of the corresponding window process, i.e. $h = f(\X)$
where $\X = X(\cdot)$, for $f: B \rightarrow \R$
and $B = C([-T,0]$ throughout this section.
 The idea consists in looking  for solutions $u$ of a suitable 
$B$-valued partial differential equation
which allows to formulate    $h$ as \eqref{EEE10bis} where $h_0$ and $\xi$ depend on $u$. The proof should be again an It\^o type formula, this 
time for processes taking values 
If $h$ belongs to
$\mathbb{D}^{1,2}$,
then  $H_{0}=\mathbb{E}[h]$ and $\xi_{t}=\mathbb{E}\left[D^{m}_{t} h\vert 
\mathcal{F}_{t} \right]$.
This statement is the classical \textit{Clark-Ocone formula}. \\

In this subsection we set $\tau=T$ and therefore $B = C([-T,0])$.
%
\begin{dfn}		\label{DefSol}
Let $H:C([-T,0])\longrightarrow \R$ be a Borel functional and 
$u:[0,T]\times B \longrightarrow \R$ of class
$C^{1,2}\left([0,T[\times B \right)\cap C^{0}\left([0,T]\times B \right)$.
$u$  is said to 
be a solution of (the infinite dimensional PDE) 
\begin{equation}		\label{eq SYST OK} 
\begin{dcases}
\partial_{t}u(t,\eta)+\int_{[-t,0]} D_x^{\perp}u\,(t,\eta)\; d\eta(x) +
\frac{\sigma^2 }{2} \langle D^{2}u\,(t,\eta)\; ,\; \1_{D_t}\rangle=0  & \mbox{for $t\in [0,T[$} \\
u(T,\eta)  =H(\eta)   &
\end{dcases}
\end{equation}
if the following conditions hold.\\ 
\textbf{i)} $D^\perp u(t,\eta)$ is absolutely continuous with respect to Lebesgue measure and its Radon-Nikodym derivative, still denoted by 
$x\mapsto D_{x}^{\perp}u\,(t,\eta)$, has bounded variation for any $t\in [0,T[$, $\eta\in B$; 
\textbf{ii)} $D^{2}u(t,\eta)$ is a Borel signed
 measure on $[-T,0]^2$ for all $t\in [0,T]$ and $\eta\in B$; 
\textbf{iii)} $u$ solves \eqref{eq SYST OK} where 
$\int_{[-t,0]}D_x^{\perp}u\,(t,\eta)\; d\eta(x)$ in the sense of Notation \ref{not IBPCONTFUNC},
setting $a = -T, c = -t, d =  b = 0$ and $ g: [-T,0] \rightarrow \R$ being the c\`adl\`ag version of $x \mapsto D_x^{\perp}u$.
 $\langle D^{2}u\,(t,\eta)\; ,\; \1_{D_t}\rangle$ indicates the evaluation of the second order derivative on the 
diagonal $D_t = \{(s,s) \vert s \in  [-t,0] \} $.
\end{dfn}
\begin{thm}		\label{prTGH}  
Let $H:B\longrightarrow \R$ be a Borel functional and 
$u:[0,T]\times B \longrightarrow \R$ be a solution
to \eqref{eq SYST OK}. We set
 $\chi:= \chi^{0}([-T,0]^{2})\oplus Diag([-T,0]^{2}) $, (shortly $\chi^0 \oplus Diag$). 
We suppose the following.\\
\textbf{i)} $(t,\eta)\mapsto \| D^\perp u \; (t,\eta)\|_{BV}:=
|D_0^\perp u \; (t,\eta)|+\int_{[-T,0]} | D_{x}^\perp u \; (t,\eta) | dx =
|D_0^\perp u \; (t,\eta)|+\|D^\perp u \; (t,\eta)   \|_{Var}$ is bounded on
 $[0,T]\times K$ for each compact $K$ of $B$.\\
\textbf{ii)} $D^{2}u\, (t,\eta)\in \chi$ for every $t\in [0,T]$, $\eta \in B$ and that 
map $(t, \eta)\mapsto D^{2}u\, (t,\eta) $ is continuous from $[0,T]\times
 B$ to $\chi$.\\
Let $X$ be  a continuous process with
 $[X]_{t}=\sigma^2 t$, $\sigma\geq 0$, and $X_{0}=0$.\\
Then the random variable $h:=H(X_{T}(\cdot))$ 
admits the following representation
\begin{equation}	\label{eqH}
h=u(T,X_{T}(\cdot))=
H_{0}+\int_{0}^{T}\xi_{t}d^{-}X_{t}
\end{equation} 
with $H_{0}=u(0,X_{0}(\cdot))$, $\xi_{t}=D^{\delta_{0}}u\, (s,X_{s}(\cdot))$ 
and $\int_{0}^{T}\xi_{t}d^{-}X_{t}$ is an improper forward integral.
\end{thm}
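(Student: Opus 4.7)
The plan is to apply the infinite-dimensional Itô formula (Theorem \ref{thm ITONOM}) to $u$ composed with the window process $X(\cdot)$ on the sub-interval $[0, T-\varepsilon]$ for each $\varepsilon \in (0,T)$, since $u$ is only $C^{1,2}$ up to but not including time $T$. The PDE \eqref{eq SYST OK} will then allow cancellation of all terms except the $D^{\delta_0} u$-forward integral, and letting $\varepsilon \downarrow 0$ while using continuity of $u$ at $T$ will yield representation \eqref{eqH}.

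First I check that $X(\cdot)$ admits a $\chi$-quadratic variation for $\chi = \chi^0 \oplus Diag$. Since $[X]_t = \sigma^2 t$, Corollary \ref{cor DIAG} handles the $\chi^0$-part and Proposition \ref{pr DIAG tau} handles the $Diag$-part; because $\chi^0 \cap Diag = \{0\}$, Proposition \ref{prop somma diretta di qv} combines them into a $\chi$-quadratic variation. Hypothesis (ii) provides the continuity of $D^2 u : [0,T]\times B \to \chi$, so Theorem \ref{thm ITONOM} gives
\[
u(T-\varepsilon, X_{T-\varepsilon}(\cdot)) = u(0, X_0(\cdot)) + \int_0^{T-\varepsilon} \partial_s u(s, X_s(\cdot))\, ds + \mathcal{I}_\varepsilon^{\mathrm{fwd}} + \mathcal{I}_\varepsilon^{\mathrm{qv}},
\]
with $\mathcal{I}_\varepsilon^{\mathrm{fwd}} = \int_0^{T-\varepsilon} \langle Du(s,X_s(\cdot)), d^- X_s(\cdot)\rangle$ and $\mathcal{I}_\varepsilon^{\mathrm{qv}} = \tfrac{1}{2}\int_0^{T-\varepsilon} \langle D^2 u(s,X_s(\cdot)), d\widetilde{[X(\cdot)]}_s\rangle$.

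Next I identify the two integral terms. Splitting $Du = D^{\delta_0} u \cdot \delta_0 + D^\perp u$, the $\delta_0$-piece of $\mathcal{I}_\varepsilon^{\mathrm{fwd}}$ is precisely $\int_0^{T-\varepsilon} D^{\delta_0} u(s,X_s(\cdot))\, d^- X_s$, the candidate Clark--Ocone integrand; the $D^\perp u$-piece must be matched with $\int_0^{T-\varepsilon}\bigl(\int_{]-s,0]} D^\perp u(s,X_s(\cdot))\, d\eta\bigr)\, ds$, where the inner integral is understood as in Notation \ref{not IBPCONTFUNC} using the BV regularity of $x\mapsto D_x^\perp u(s,\eta)$ granted by hypothesis (i). Concerning $\mathcal{I}_\varepsilon^{\mathrm{qv}}$, splitting $D^2 u$ along $\chi^0 \oplus Diag$, item 2 of Corollary \ref{cor DIAG} yields $\tfrac{\sigma^2}{2}\int_0^{T-\varepsilon} D^2 u(s, X_s(\cdot))(\{0,0\})\,ds$ for the $\chi^0$-part; writing $D^2_{Diag}u(s,\eta)(dx,dy)=g(s,\eta,x)\delta_y(dx)dy$ and applying the reasoning of Proposition \ref{prop 5GT6} with $[X]_{ds}=\sigma^2 ds$ produces $\tfrac{\sigma^2}{2}\int_0^{T-\varepsilon}\int_{-s}^0 g(s,X_s(\cdot),x)\,dx\,ds$ for the $Diag$-part. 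The two pieces recombine into $\tfrac{\sigma^2}{2}\int_0^{T-\varepsilon}\langle D^2 u(s,X_s(\cdot)),\1_{D_s}\rangle ds$, since $\1_{D_s}$ picks exactly the $Diag$-density on $[-s,0]$ plus the atom at $(0,0)$ carried by the $\mathcal{D}_{0,0}$-component.

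Substituting the PDE \eqref{eq SYST OK} into $\int_0^{T-\varepsilon}\partial_s u(s,X_s(\cdot))\,ds$ cancels precisely the pathwise $D^\perp u$-contribution and opposes the $\langle D^2 u,\1_{D_s}\rangle$ contribution, leaving
\[
u(T-\varepsilon, X_{T-\varepsilon}(\cdot)) = u(0, X_0(\cdot)) + \int_0^{T-\varepsilon} D^{\delta_0} u(s,X_s(\cdot))\, d^- X_s.
\]
Continuity of $u$ and of $s\mapsto X_s(\cdot)$ in $B$ force the left-hand side to converge in probability to $u(T,X_T(\cdot)) = H(X_T(\cdot)) = h$ as $\varepsilon \downarrow 0$; the right-hand side must then admit a limit in probability, which by item 3 of Definition \ref{def cov} is the improper forward integral $\int_0^T D^{\delta_0} u(s,X_s(\cdot))\, d^- X_s$, with $H_0=u(0,X_0(\cdot))$ and $\xi_s=D^{\delta_0}u(s,X_s(\cdot))$. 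The main obstacle will be the step identifying the $D^\perp u$-part of $\mathcal{I}_\varepsilon^{\mathrm{fwd}}$ with the pathwise $d\eta$-integral: one must interchange the regularization limit defining the forward integral with the $ds$-integration and perform spatial integration by parts, and this is precisely where the uniform BV bound of hypothesis (i) supplies the domination needed to apply Lebesgue's theorem.
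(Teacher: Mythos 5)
Your proposal is correct and follows essentially the same route as the paper's proof: apply Theorem \ref{thm ITONOM} for $t<T$ after establishing the $\chi^0\oplus Diag$-quadratic variation via Corollary \ref{cor DIAG}, Propositions \ref{pr DIAG tau} and \ref{prop somma diretta di qv}, identify the $D^\perp u$-part of the forward integral with the pathwise $d\eta$-integral using hypothesis (i) (a step the paper likewise asserts while omitting the technicalities), cancel via the PDE, and pass to the limit $t\uparrow T$ to obtain the improper forward integral. Your explicit verification that the $\chi^0$- and $Diag$-contributions of the second-order term recombine into $\tfrac{\sigma^2}{2}\langle D^2u,\1_{D_s}\rangle$ is a slightly more detailed account of what the paper leaves implicit in the definition of $\mathcal{L}u$, but it is the same argument.
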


\begin{proof}  
Since $u\in C^{0}\left([0,T]\times B \right)$, $H=u(T,\cdot)$ is automatically continuous.
By Propositions \ref{cor DIAG}, \ref{pr DIAG tau} and \ref{prop somma diretta di qv} $X(\cdot)$ admits a $\chi$-quadratic variation which is the sum of the 
$\chi^0$-quadratic variation and the $Diag$-quadratic variation. 
Applying Theorem \ref{thm ITONOM} to $u\left(t, X_{t}(\cdot)\right)$ for $t < T$ we obtain 
\be \label{eqBN0}
\begin{split}
u(t,X_{t}(\cdot))&=u(0,X_{0}(\cdot))+
\int_{0}^{t}\partial_{t}u(s,X_{s}(\cdot))ds+\int_{0}^{t}\prescript{}{\mathcal{M}([-T,0])}{\langle} Du(s,X_{s}(\cdot)),d^{-}X_{s}(\cdot)\rangle_{C([-T,0])}\\
&
+\frac{1}{2}\int_{0}^{t} \prescript{}{\chi}{\langle} D^{2}u(s,X_{s}(\cdot)),
d\widetilde{[X(\cdot)]}_{s}\rangle_{\chi^{\ast}} .
\end{split}
\ee
By Assumption \textbf{i)} it is possible to show that $\int_{0}^{t}\prescript{}{\mathcal{M}([-T,0])}{\langle} D^\perp u(s,X_{s}(\cdot)),d^{-}X_{s}(\cdot)\rangle_{C([-T,0])}$ 
exists and equals 
$\int_{0}^t \left( \int_{]-s,0]}D^\perp u(s,\eta)d\eta\right)|_{\eta=X_{s}(\cdot)} ds$. We omit the technicalities. 
Consequently, by subtraction,
$\int_{0}^t D^{\delta_0} u(s,X_s(\cdot)) d^-X_s$ exists for $t\in [0,T[$.
The It\^o expansion \eqref{eqBN0} gives
\begin{equation}		\label{eq BN}
u(t,X_{t}(\cdot))=u(0,X_{0}(\cdot))+
\int_{0}^{t}D^{\delta_{0}}u\, (s,X_{s}(\cdot))d^{-}X_{s}+
\int_{0}^{t}\mathcal{L}u\,(s,X_{s}(\cdot))ds
\end{equation} 
where 
 \begin{equation}		\label{def opLst}
\mathcal{L} u \,(t,\eta)=
\partial_{t}u(t,\eta)+
\int_{]-t,0]}D^{\perp}u(t,\eta)\,d\eta+\frac{\sigma^2}{2}\langle D^{2}u\,(t,\eta)\; ,\; \1_{D_t}\rangle, 
\end{equation}
for $t\in [0,T[$, $\eta\in B$. By hypothesis $\mathcal{L} u\,(t,\eta)=0$, so \eqref{eq BN} gives  
\begin{equation}		\label{eq 34}
u(t,X_{t}(\cdot))=u(0,X_{0}(\cdot))+
\int_{0}^{t}D^{\delta_{0}}u\, (s,X_{s}(\cdot))d^{-}X_{s}  .
\end{equation}
Now for every fixed $\omega$, since $u\in C^{0}\left([0,T]\times B \right)$ and $X$ is continuous, we have 
$ \lim_{t\rightarrow T} u(t, X_{t}(\cdot))=u(T,X_{T}(\cdot))$, which equals $H(X_{T}(\cdot))$ by \eqref{eq SYST OK}. This forces the right-hand side of \eqref{eq 34} to converge, 
so that the result follows.
\end{proof}
\begin{rem} \label{RSigmaZero}
 Previous theorem also applies in the case $\sigma=0$, i.e. $[X]=0$.  
To this purpose we observe the following.
\begin{enumerate}
\item Let 
\be   \label{eq7.4ter}
h=f\left( \int_{0}^{T}\varphi_{1}(s)d^{-}X_{s}, \ldots, \int_{0}^{T} \varphi_{n}(s)d^{-}X_{s}\right),
\ee 
with $\varphi_{i}\in C^{2}([0,T])$ and $f\in C^{2}(\mathbb{R}^{n})$. 
We observe that the integrals $ \int_{0}^{T}\varphi_{i}(s)d^{-}X_{s}, 
1 \le i \le n$ are defined because each $\varphi_i$ has bounded
variation, see item 3. of Remark \ref{R120}.
In that case the PDE in \eqref{eq SYST OK} simplifies into $\partial_{t}u + \int_{[-t,0]}D^{\perp}u\,(t,\eta)\,d\eta  =0$ and
 it is easy to provide a solution $u$ in the sense of Definition \ref{DefSol}. 
That $u:[0,T]\times C([-T,0])\longrightarrow \R$ is given by 
\be	\label{eq 6.4bis}
u(t, \eta)=f\left(\int_{[-t,0]}
 \varphi_{1}(s+t)d\eta(s), \ldots, \int_{[-t,0]} \varphi_{n}(s+t)d\eta(s) \right),
\ee
adopting the same conventions as in Notation \ref{not IBPCONTFUNC}.
\item Since $D^{\delta_0}u(t,\eta)=
 \sum_{i=1}^{n}\partial_{i}f \left(\int_{[-t,0]}\varphi_{1}(s+t)d\eta(s),\ldots, \int_{[-t,0]}\varphi_{n}(s+t)d\eta(s)\right) \varphi_{i}(t)$, by Theorem \ref{prTGH}, we obtain representation \eqref{eqH}
with $H_0=f(0, \ldots, 0)$ and $\xi_{t} =D^{\delta_0}u(t,X_t(\cdot))$
The assumptions of Theorem \ref{prTGH} can be easily checked, but we omit the details. We remind only that $X(\cdot)$ admits $\chi^{0}$-quadratic variation.
\item In the case $\sigma=0$, representation \eqref{eqH} can be also established via an application of the finite dimensional It\^o formula for finite quadratic variation processes, 
see Proposition 2.4 in \cite{rg2}.
\item The case $\sigma\neq 0$ with the same r.v. $h$ given by \eqref{eq7.4ter} 
but with $f$ only continuous with linear growth (if $X=W$ and $\sigma=1$ even in the weaker condition $f$ 
with polynomial growth) was treated in Section 9.9 of \cite{DGR}.
\end{enumerate}
\end{rem}
\begin{rem}\label{RClark}
\begin{enumerate} 
\item Theorem \ref{prTGH}
is only one significant result 
related to a generalized Clark-Ocone type formula. 
In order to obtain more precise results, one needs
to provide solutions to 
 infinite dimensional PDEs of the type
 \eqref{eq SYST}.
  The natural problem consists in constructing indeed  solutions of \eqref{eq SYST}. 
For a large class of random variables $h$, Chapter 9 of \cite{DGR}
 provides solutions  of \ref{eq SYST OK}  at least when $[X]_t=t$, i.e. $\sigma = 1$.
\item
 Theorem \ref{prTGH}, among others, generalizes
 Theorem 7.1 of \cite{DGRnote} and it expands its proof to the case when
 $[X]_t=\sigma^2 t$, $\sigma\geq 0$.
\end{enumerate}
\end{rem}

\begin{rem} \label{RSigma}
\begin{enumerate}
\item The assumption $[X]_{t}=\sigma^2 t$ is not crucial. 
With some more work it is possible to obtain similar representations even if $[X]_{t}=\int_{0}^{t}a^{2}(s,X_{s})ds$ for a large class of continuous 
$a:[0,T]\times \mathbb{R}\longrightarrow\mathbb{R}$.
\item A simple example of non-semimartingale $X$ verifying the property $[X]_t=\int_0^t a^2(s,X_s)ds$ is the following. 
Let $a:[0,T]\times \R\longrightarrow \R$ be a function of class $C^{1,0}([0,T]\times \R)$ which is Lipschitz in the second variable.
 Let $\beta$ be a non-semimartingale verifying $[\beta]_t=t$. 
A simple example is given by the sum of a classical Wiener process and an independent fractional Brownian motion $B^H$ with $1/2 <H \le3/4$.
 Obviously $[\beta]_t=t$ and $\beta$ is not a semimartingale according to 
\cite{cheridito}. 
Let $\psi:[0,T]\times \R\longrightarrow \R$ such that $\psi(t,x)=\int_{0}^x a(t,\psi(t,y))dy$. Such $\psi$ exists and it is unique since $a$ is 
Lipschitz. We set $X_t=\psi(t,\beta_t).$ \\
By the stability theorem for finite quadratic variation processes, see e.g. \cite{flru1} Remark 3, since $\psi$ is of class $C^1([0,T]\times \R)$ we get 
\[
[X]_t=\int_{0}^t \left( \frac{\partial \psi}{\partial x} (s,\beta_s)
 \right)^2d[\beta]_s
=\int_0^t a^2(s,\psi(s,\beta_s)) ds
=\int_0^t a^2(s,X_s) ds, t \in [0,T].
\]
This shows the desired property.
\item
Under some light technical assumptions on function $a$, using
It\^o forum la \ref{FITO}, it is possible to show the existence of $\gamma:[0,T]\times \R\longrightarrow\R$ continuous such that 
$
d^{-}X_t= a(t,X_t)d^{-}\beta_t+\gamma(t,X_t)dt.
$
For this type of calculations, the reader can consult \cite{rv4}.
\end{enumerate}
\end{rem}

\appendix

\section{Appendix: Proofs of some technical results}			\label{app proof}

\begin{proof}[Sketch of the proof of the Proposition \ref{prop P1}] 
Let $\mathbb{V}$ (resp. $\mathbb{Y}$) be an $H$-valued bounded variation (resp. continuous) process.
Proceeding as for real valued processes, see for instance \cite{Rus05}, Proposition 1.7)b), 
one can show that $(\mathbb{V},\mathbb{Y})$ has
a zero scalar covariation. 
A semilocally summable process is the sum of a locally summable process and a bounded variation process. 
Therefore, without restriction of generality, we can suppose that $\X$ is locally summable with respect to the tensor products. 
By localization we can suppose that $\X$ is summable with respect to the tensor products and bounded. Let $s \in [0,T]$ and consider 
the following identity 
\begin{equation}	\label{ed45}
\X^{\otimes^{2}}_{s+\epsilon}-\X_{s}^{\otimes^{2}}=\X_{s}\otimes (\X_{s+\epsilon}-\X_{s})+(\X_{s+\epsilon}-\X_{s})\otimes \X_{s}+(\X_{s+\epsilon}-\X_{s})\otimes^{2} \, .
\end{equation}
Dividing \eqref{ed45} by $\epsilon$ and integrating from $0$ to $t$ in the Bochner sense we obtain
\begin{equation}
I_{0}(t,\epsilon)=I_{1}(t,\epsilon)+I_{2}(t,\epsilon)+\int_{0}^{t}\frac{(\X_{s+\epsilon}-\X_{s})\otimes^{2}  }{\epsilon}ds
\end{equation}
where 
\[
I_{0}(t,\epsilon) =\int_{0}^{t} \frac{\X^{\otimes^{2}}_{s+\epsilon}-\X_{s}^{\otimes^{2}} }{\epsilon} ds \, , \quad
I_{1}(t,\epsilon) =\int_{0}^{t} \frac{\X_{s}\otimes (\X_{s+\epsilon}-\X_{s})}{\epsilon} ds  \, , \quad
I_{2}(t,\epsilon) =\int_{0}^{t} \frac{(\X_{s+\epsilon}-\X_{s})\otimes \X_{s}}{\epsilon} ds   \, .
\]
Let $t\in [0,T]$. Obviously we get $
\lim_{\epsilon \rightarrow 0}I_{0}(t,\epsilon) =\X^{\otimes^{2}}_{t}-\X_{0}^{\otimes^{2}} $.

\noindent
By an elementary Fubini argument we can show that 
\[
I_{1}(t,\epsilon)=\int_{0}^{t} \left(\frac{1}{\epsilon} \int_{u-\epsilon}^{u}\X_{s} ds  \right)\otimes d\X_{u}  \; .
\]
Since $\frac{1}{\epsilon} \int_{u-\epsilon}^{u}\X_{s}ds\longrightarrow \X_{u}$ for every $u\in [0,T]$ and $\omega \in \Omega$ and 
$\X$ being bounded,
Theorem 1 in Section 12. A of \cite{dincuvisi}  allows to show that 
$I_{1}(t,\epsilon)\longrightarrow \int_{0}^{t} \X_{s}\otimes d\X_{s} $ in probability. Similarly one shows that $I_{2}(t,\epsilon) \longrightarrow \int_{0}^{t} d\X_{s}\otimes \X_{s} $. 
In conclusion $\X$ admits a tensor quadratic variation which equals 
\[
\X_{t}^{\otimes^{2}}-\int_{0}^{t}\X_{s}\otimes d\X_{s}-\int_{0}^{t} d\X_{s}\otimes \X_{s} \, .
\]
\end{proof}
\begin{proof} [Sketch of the proof of Proposition \ref{prop P2}]
Let $H$ be the  Hilbert values space of $\X$. Let $\mathbb{V}$ (resp. $\mathbb{Y}$) be an $H$-valued bounded variation (resp. continuous) process.
Without restriction of generality we can suppose that $\X$ is an $(\mathcal{F}_{t})$-local martingale. After localization one can suppose 
that $\X$ is an $(\mathcal{F}_{t})$-square integrable martingale. Proceeding similarly as for the proof of Proposition \ref{prop P1}, using Remark 14.b) of 
Chapter 6.23 of \cite{dincuvisi}, it is possible to show that 
\[
\frac{1}{\epsilon}\int_{0}^{t} \| \X_{s+\epsilon}- \X_{s}\|^{2}_{H}ds \xrightarrow[\epsilon\longrightarrow 0]{} \| \X_{t}\|_{H}^{2}-2 \int_{0}^{t}\langle \X_{s}, d\X_{s}\rangle_{H} \; .
\]
The analogous of the bilinear forms considered in Proposition \ref{prop P1} proof will be the $H$ inner product.
\end{proof}

\noindent
Before writing the proof of Proposition \ref{pr IMPR} we need a technical lemma. In the sequel the indices
 $\chi$ and $\chi^{\ast}$ in the duality, will  often be omitted.
\begin{lem}		\label{lem IMPR}  
Let $t\in [0,T]$. There is a subsequence of $(n_{k})$ still denoted by the same symbol and a null subset $N$ of $\Omega$ such that
\begin{equation}
\widetilde{F}^{n_{k}}(\omega,t)(\phi)
\longrightarrow_{k \rightarrow \infty}
\widetilde{F}(\omega,t)(\phi)  \quad \mbox{for every $\phi\in \chi$ and $\omega\notin N$.}
\end{equation}
\end{lem}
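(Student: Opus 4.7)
My plan is to proceed in three stages: extract a subsequence carrying a uniform variation bound, upgrade the pointwise-in-probability convergence to almost sure convergence on a countable dense subset, and then exploit the uniform bound to extend the convergence to every $\phi\in\chi$.

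First, I would use hypothesis (i) to extract a subsequence of $(n_k)$, still denoted $(n_k)$, along which $M(\omega):=\sup_k\|\widetilde{F}^{n_k}(\omega,\cdot)\|_{Var([0,T])}<\infty$ almost surely. Hypothesis (iv) together with the identification $\widetilde{F}^{n_k}(\cdot,0)(\phi)=F^{n_k}(\phi)(\cdot,0)=0$ a.s., combined with separability of $\chi$, should force $\widetilde{F}^{n_k}(\omega,0)=0$ in $\chi^{\ast}$ outside a common null set, yielding the key pointwise estimate
\begin{equation*}
\|\widetilde{F}^{n_k}(\omega,t)\|_{\chi^{\ast}}=\|\widetilde{F}^{n_k}(\omega,t)-\widetilde{F}^{n_k}(\omega,0)\|_{\chi^{\ast}}\le M(\omega).
\end{equation*}
The same reasoning applied to the limit using (ii)--(iv) produces $\widetilde{F}(\omega,0)=0$ and a finite constant $M'(\omega):=\|\widetilde{F}(\omega,\cdot)\|_{Var([0,T])}$ bounding $\|\widetilde{F}(\omega,t)\|_{\chi^{\ast}}$.

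Next, I would fix a countable dense set $\{\phi_j\}_{j\in\mathbb{N}}\subset\chi$. For each $j$, hypothesis (ii) gives $F^{n_k}(\phi_j)(\cdot,t)\to F(\phi_j)(\cdot,t)$ in probability, so a standard diagonal extraction yields a further subsequence, still denoted $(n_k)$, and a null set $N$, enlarged to absorb the earlier null sets, such that for every $\omega\notin N$ and every $j\in\mathbb{N}$,
\begin{equation*}
\widetilde{F}^{n_k}(\omega,t)(\phi_j)\xrightarrow[k\to\infty]{}\widetilde{F}(\omega,t)(\phi_j).
\end{equation*}

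For the final stage, given $\omega\notin N$, $\phi\in\chi$ and $\varepsilon>0$, I would choose $\phi_j$ with $\|\phi-\phi_j\|_\chi\le\varepsilon$ and apply the triangle inequality together with the two uniform bounds to get
\begin{equation*}
\bigl|\widetilde{F}^{n_k}(\omega,t)(\phi)-\widetilde{F}(\omega,t)(\phi)\bigr|\le\bigl(M(\omega)+M'(\omega)\bigr)\varepsilon+\bigl|\widetilde{F}^{n_k}(\omega,t)(\phi_j)-\widetilde{F}(\omega,t)(\phi_j)\bigr|,
\end{equation*}
so that sending $k\to\infty$ and then $\varepsilon\to 0$ concludes. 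The only delicate point is really the uniform $\chi^{\ast}$-bound on $\widetilde{F}^{n_k}(\omega,t)$, which is precisely the mechanism through which hypotheses (i) and (iv) cooperate with separability of $\chi$ to let convergence on a countable dense subset upgrade to convergence against every $\phi\in\chi$.
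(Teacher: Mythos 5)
Your proposal is correct and follows essentially the same route as the paper: a diagonal extraction giving almost sure convergence on a countable dense subset of $\chi$, followed by a three-term triangle inequality in which the uniform bound $\sup_{k}\|\widetilde{F}^{n_{k}}(\omega,\cdot)\|_{Var([0,T])}$ from hypothesis (i) controls the approximation error uniformly in $k$. Your explicit remark that hypothesis (iv) is needed to pass from the total-variation bound to the pointwise bound $\|\widetilde{F}^{n_{k}}(\omega,t)\|_{\chi^{\ast}}\le M(\omega)$ is a step the paper leaves implicit, but it does not change the argument.
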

\begin{proof}[Proof of Lemma \ref{lem IMPR} ]
Let $\mathcal{S}$ be a dense countable subset of $\chi$. By a diagonalization principle for extracting subsequences, there is a subsequence $(n_k)$,
a null subset $N$ of $\Omega$ 
such that for all $\omega \notin \Omega$, 
\begin{equation}	\label{eq PP1}
\widetilde{F}_{\infty}(\omega,t)(\phi):=\lim_{k\rightarrow +\infty} 
\widetilde{F}^{n_{k}}(\omega,t)(\phi) \quad {\mbox{exists for any $\phi\in \mathcal{S}$, $\omega\notin N$ and $\forall 
\, t\in [0,T]$.}}
\end{equation}
By construction, for every $t\in [0,T]$, $\phi\in \mathcal{S}$
\[
\widetilde{F}(\cdot,t)(\phi)=F(\phi)(\cdot,t)=\widetilde{F}_{\infty}(\cdot,t)(\phi)
\hspace{0.5cm} a.s.
\]
 Let $t \in [0,T]$ be fixed.
Since $\phi\in \mathcal{S}$ countable, a slight modification of the null set $N$, yields that for every $\omega \notin N$,
\[
\widetilde{F}(\omega,t)(\phi)=\widetilde{F}_{\infty}(\omega,t)(\phi)
\hspace{0.5cm} \forall\, \phi\in \mathcal{S}  \; .
\]
At this point \eqref{eq PP1} becomes
\begin{equation}	\label{eq PP2}
\widetilde{F}(\omega,t)(\phi)=\lim_{k\rightarrow +\infty}  \widetilde{F}^{n_{k}}(\omega,t)(\phi),  \quad {\mbox{for every $\omega\notin N$, $\phi\in \mathcal{S}$.}}
\end{equation} 
It remains to show that \eqref{eq PP2} still holds for $\phi\in \chi$. Therefore we fix $\phi\in \chi$, $\omega\notin N$. 
Let $\epsilon>0$ and $\phi_{\epsilon}\in \mathcal{S}$ such that $\|\phi-\phi_{\epsilon}\|_{\chi} \leq \epsilon$. We can write
\[
\begin{split}
\left| 
\widetilde{F}(\omega,t)(\phi)- \widetilde{F}^{n_{k}}(\omega,t)(\phi)
\right|
&
\leq 
\left| 
\widetilde{F}(\omega,t)(\phi-\phi_{\epsilon})\right| + 
\left|
\widetilde{F}(\omega,t)(\phi_{\epsilon})-  \widetilde{F}^{n_{k}}(\omega,t)(\phi_{\epsilon})
\right|
+
\left| 
\widetilde{F}^{n_{k}}(\omega,t)(\phi_{\epsilon}-\phi)
\right|
\leq
\\
&
\leq \left\| \widetilde{F}(\omega,t) \right\|_{\chi^{\ast}} \|\phi-\phi_{\epsilon}\|_{\chi} +
\sup_{k} \left\| \widetilde{F}^{n_{k}}(\omega,t) \right\|_{\chi^{\ast}}   \|\phi-\phi_{\epsilon}\|_{\chi}+ \\
&\hspace{2cm}
+
\left|
\widetilde{F}(\omega,t)(\phi_{\epsilon})-  \widetilde{F}^{n_{k}}(\omega,t)(\phi_{\epsilon})
\right|		\; .
\end{split}
\]
Taking the $\limsup_{k\rightarrow +\infty}$ in previous expression and using 
\eqref{eq PP2} yields 
\[
\limsup_{k\rightarrow +\infty} \left| \widetilde{F}(\omega,t)(\phi)- \widetilde{F}^{n_{k}}(\omega,t)(\phi) \right|
\leq
 \left\| \widetilde{F}(\omega,t) \right\|_{\chi^{\ast}} \epsilon +
\sup_{k} \left\| \widetilde{F}^{n_{k}}(\omega,\cdot) \right\|_{Var[0,T]}  \epsilon \; .
\]
Since $\epsilon>0$ is arbitrary, the result follows.
\end{proof}
\begin{proof} [Proof of Proposition \ref{pr IMPR} ]
Let $t\in [0,T]$ be fixed. We denote 
\begin{displaymath}
I(n)(\omega):=\int_{0}^{t} \langle H(\omega,s ),d\widetilde{F}^{n}(\omega,s) \rangle -\int_{0}^{t} \langle H(\omega, s),d\widetilde{F}(\omega,s)\rangle  \; .
\end{displaymath}
Let $\delta>0$ and a subdivision of $[0,t]$
given by $0=t_{0}<t_{1}<\cdots<t_{m}=t$ whose mesh is smaller than $\delta$. 
Let $(n_{k})$ be a sequence diverging to infinity. 
We need to exhibit a subsequence $(n_{k_{j}})$ such that 
\begin{equation}  \label{L1}
I(n_{k_{j}})(\omega)\longrightarrow 0 \hspace{1cm} \textrm{ a.s.}
\end{equation}
Lemma \ref{lem IMPR} implies the existence of a null set $N$, a subsequence $(n_{k_{j}})$ such that 
\begin{equation}		\label{eq PP3}
\left|   \widetilde{F}^{n_{k_{j}}}(\omega,t_{l})(\phi) - \widetilde{F}(\omega,t_{l})(\phi)\right|\xrightarrow[j \longrightarrow +\infty]{}0
\hspace{1cm} \forall \, \phi\in \chi \hspace{0.5cm} \textrm{and for every } \hspace{0.5cm} l\in \{0,\ldots, m\} \; .
\end{equation}
Let $\omega\notin N$. We have
\begin{displaymath}
\begin{split}
\left|
I(n_{k_{j}})(\omega) \right|
&=
\left|\sum_{i=1}^{m}\left(
\int_{t_{i-1}}^{t_{i}} \langle H(\omega,s),d\widetilde{F}^{n_{k_{j}}}(\omega,s) \rangle -
\langle H(\omega, s),d\widetilde{F}(\omega,s) \rangle \right)\right|
\leq\\
&
\leq
\sum_{i=1}^{m}
\left|\int_{t_{i-1}}^{t_{i}} \langle H(\omega,s)-H(\omega,t_{i-1})+H(\omega,t_{i-1}),d\widetilde{F}^{n_{k_{j}}}(\omega,s)\rangle + \right. \\
&
\hspace{3cm}
\left.
-\int_{t_{i-1}}^{t_{i}} \langle H(\omega,s)-H(\omega,t_{i-1})+H(\omega,t_{i-1}),d\widetilde{F}(\omega,s) \rangle \right|\leq\\
&
\leq  I_{1}(n_{k_{j}})(\omega)+I_{2}(n_{k_{j}})(\omega)+I_{3}(n_{k_{j}})(\omega)  \; , \\
\end{split}
\end{displaymath}
where
\begin{displaymath}
\begin{split}
I_{1}(n_{k_{j}})(\omega)
&=\sum_{i=1}^{m}\left|
\int_{t_{i-1}}^{t_{i}} \langle H(\omega,s)-H(\omega,t_{i-1}),d\widetilde{F}^{n_{k_{j}}}(\omega,s) \rangle \right|\leq
\varpi_{H(\omega,\cdot)}(\delta)\,\sup_{j}\|\widetilde{F}^{n_{k_{j}}} (\omega)\|_{Var[0,T]}
\\
I_{2}(n_{k_{j}})(\omega)
&=\sum_{i=1}^{m}
\left|\int_{t_{i-1}}^{t_{i}} \langle H(\omega,s)-H(\omega,t_{i-1}),d\widetilde{F}(\omega,s) \rangle
\right|\leq
\varpi_{H(\omega,\cdot)}(\delta)\, \|\widetilde{F}(\omega)\|_{Var[0,T]} \\
I_{3}(n_{k_{j}})(\omega)
&
=\sum_{i=1}^{m}\left|
\int_{t_{i-1}}^{t_{i}} \langle H(\omega,t_{i-1}),d(\widetilde{F}^{n_{k_{j}}}(\omega,s)-\widetilde{F}(\omega,s)) \rangle \right|=\\
&
=\sum_{i=1}^{m}
\left|   \langle H(\omega,t_{i-1}),\widetilde{F}^{n_{k_{j}}}(\omega,t_{i})-\widetilde{F}(\omega,t_{i})-\widetilde{F}^{n_{k_{j}}}(\omega,t_{i-1})+
\widetilde{F}(\omega,t_{i-1})\rangle \right|   \leq
\\
&\leq
\sum_{i=1}^{m}|F^{n_{k_{j}}}(H(\omega,t_{i-1}))(\omega,t_{i})-F(H(\omega,t_{i-1}))(\omega,t_{i})|+
\\
&\hspace{3cm}
\sum_{i=1}^{m}|F^{n_{k_{j}}}(H(\omega,t_{i-1}))(\omega,t_{i-1})-F(H(\omega,t_{i-1}))(\omega,t_{i-1})|  \; .
\end{split}
\end{displaymath}
The notation $\varpi_{H(\omega,\cdot)}$ indicates the modulus of continuity for $H$ and it is a random variable; in fact  it
depends on $\omega$ in the sense that
\[
\varpi_{H(\omega,\cdot)}(\delta)=
\sup_{|s-t|\leq\delta  }  
\left\| H(\omega,s)-H(\omega,t) \right\|_{\chi}		\; .
\]
By \eqref{eq PP3} applied to $\phi=H(\omega,t_{i-1})$ we obtain
\begin{equation}
\lim\sup_{j\rightarrow\infty} |   I(n_{k_{j}})(\omega)  |  \leq
\left( 
\sup_{j}\|\widetilde{F}^{n_{k_{j}}} (\omega)\|_{Var[0,T]}+ \|\widetilde{F}(\omega)\|_{Var[0,T]} 
\right) \,\varpi_{H(\omega,\cdot)}(\delta)  \, .
\end{equation}
Since $\delta>0$ is arbitrary and $H$ is uniformly continuous on $[0,t]$ so that $\varpi_{H(\omega,\cdot)}(\delta)\rightarrow 0$ a.s. 
for $\delta\rightarrow 0$, then $\lim\sup_{j\rightarrow\infty}|I(n_{k_{j}})(\cdot)|=0$ a.s..\\
This concludes \eqref{L1} and the proof of Proposition \ref{pr IMPR}. 
\end{proof}

\begin{proof}[Proof of Theorem \ref{thm QV}] \
\begin{description}
\item [a)] 
We recall that
$\mathscr{C}([0,T])$ is an $F$-space. 
Let $\phi\in \chi$. Clearly $\left( F^{n}(\phi)(\cdot, t)\right)_{t}$ 
and $\left( \tilde{F}^{n}(\cdot, t)(\phi)\right)_{t}$ 
are indistinguishable processes and so $\left( \tilde{F}^{n}(\phi)(\cdot, t)\right)_{t}$ is a continuous process.
So it follows 
\[
\begin{split}
\left\|F^{n}(\phi) \right\|_{\infty}& =\sup_{t\in[0,T]}|F^{n}(\phi)(t)|=\sup_{t\in [0,T]}|\tilde{F}^{n}(\cdot, t)(\phi)|
\leq
\\
&
\leq
\sup_{t\in [0,T]}\left\| \tilde{F}^{n}(\cdot, t)\right\|_{\chi^{\ast}} \left\|\phi\right\|_{\chi}
\leq
\sup_{n}\|\tilde{F}^{n}\|_{Var([0,T])}\|\phi\|_{\chi}< +\infty
\end{split}
\]
a.s. by the hypothesis. By Remark \ref{rem TRE}.2. and 3. it follows that the set $\left\{ F^{n}(\phi)\right\}$ is a
bounded subset of the $F$-space $\mathscr{C}([0, T])$ for every fixed
 $\phi\in \chi$.\\
We can apply the Banach-Steinhaus Theorem II.1.18, Page. 55 in \cite{ds} and point \textbf{iv)}, 
which 
imply the existence of
$F:\chi\longrightarrow \mathscr{C}([0,T])$ linear and continuous
such that $F^{n}(\phi)\longrightarrow F(\phi)$ ucp for every
$\phi\in \chi$. 
So {\bf a)} is established in both situations \textbf{1)} and \textbf{2)}.
\item [b)] It remains to show the rest in situation \textbf{1)}, i.e. when $\chi$ is separable.
\item [b.1)]  
We first prove the existence of a 
suitable version $\tilde{F}$ of $F$ such that 
$\tilde{F}(\omega,\cdot):[0,T]\longrightarrow \chi^{\ast} $ is weakly star
continuous $\omega$ a.s.\\
Since $\chi$ is separable, we consider a dense countable subset
$\mathcal{D}\subset \chi$. Point {\bf a)} implies that for a fixed
$\phi\in\mathcal{D}$ there is a subsequence $(n_{k})$ such that 
$F^{n_{k}}(\phi)(\omega,\cdot)\xrightarrow[]{C([0,T])} F(\phi)(\omega, \cdot) $ 
a.s. Since $\mathcal{D}$ is countable there is a null set $N$ and a further 
subsequence still denoted by $(n_{k})$ such that
\begin{equation}	\label{eq THMC}
\tilde{F}^{n_{k}}(\omega,\cdot)(\phi)\xrightarrow[]{C([0,T])} F(\phi)(\omega, \cdot) \quad\quad
\,\forall\,\phi\in\mathcal{D},\,\forall \omega \notin
N  \; .
\end{equation}
For $\omega\notin N$, we set $\tilde{F}(\omega,t)(\phi)=F(\phi)(\omega,t)$ $\forall\; \phi\in \mathcal{S}$, $t\in [0,T]$. 
By a slight abuse of notation
the
sequence $\tilde{F}^{n_{k}}$ can be seen as applications 
$$\tilde{F}^{n_{k}}(\omega,\cdot):\chi\longrightarrow C([0,T]) $$ 
which are linear continuous maps verifying the following.
\begin{itemize}
\item
$\tilde{F}^{n_{k}}(\omega,\cdot)(\phi)\longrightarrow
\tilde{F}(\omega,\cdot)(\phi)$ in $C([0,T])$ for all $\phi\in\mathcal{D}$, because of \eqref{eq THMC}.
\item For every $\phi \in \chi$, we have
\begin{eqnarray*}
\sup_{k}\sup_{t\leq T}|\tilde{F}^{n_{k}}(\omega,t)(\phi)|
&\leq&
\sup_{k}\sup_{t\leq T} \sup_{\|\phi\|_{\chi} \leq 1} |\tilde{F}^{n_{k}}(\omega,t)(\phi)|\; \|\phi\|_{\chi}\leq
\sup_{k}\sup_{t\leq T}\|\tilde{F}^{n_{k}}(\omega,t)\|\; \|\phi\|_{\chi}  \\
&\leq& 
\sup_{k}\|\tilde{F}^{n_{k}}(\omega,\cdot)\|_{Var([0,T])}\|\phi\|_{\chi}<+\infty.
\end{eqnarray*}
\end{itemize}
Banach-Steinhaus theorem 
implies the existence of a linear random continuous map
$$
\tilde{F}(\omega,\cdot):\chi\longrightarrow C([0,T])
$$ 
extending previous map
$\tilde{F}(\omega,\cdot)$ from $\mathcal{D}$ to $\chi$ with values on $C([0,T])$. 
Moreover 
$$
\tilde{F}^{n_{k}}(\omega,\cdot)(\phi)\xrightarrow[]{C([0,T])}
\tilde{F}(\omega,\cdot)(\phi)\quad\quad 
\,\forall\phi\in\chi,\,\forall \omega \notin
N
$$
and for every $\omega\notin N$ the application
$$
\tilde{F}(\omega,\cdot):[0,T]\longrightarrow \chi^{\ast} \hspace{1cm} t\mapsto \tilde{F}(\omega,t)
$$ 
is weakly star continuous.
$\tilde{F}$ is measurable from
$\Omega \times [0,T]$ to $\chi^{\ast}$ being limit of measurable processes. 

\item  [b.2)] We prove now that the $\chi^{\ast}$-valued process $\tilde{F}$ 
has bounded variation.\\ 
Let $\omega \notin N$ fixed again. Let $(t_{i})^{M}_{i=0}$ be a subdivision of $[0,T]$ and let $\phi\in \chi$. 
Since the functions 
\[
F^{t_{i},t_{i+1}}:\phi\longrightarrow \left(
\tilde{F}(t_{i+1})-\tilde{F}(t_{i}) \right)(\phi) \quad\quad\quad
F^{n_{k},t_{i},t_{i+1}}:\phi\longrightarrow \left(
\tilde{F}^{n_{k}}(t_{i+1})-\tilde{F}^{n_{k}}(t_{i}) \right)(\phi)
\]
belong to $\chi^{\ast}$, Banach-Steinhaus theorem says
\[
\begin{split}
\sup_{\|\phi\|\leq
1}\left|\left( \tilde{F}(t_{i+1})-\tilde{F}(t_{i}) \right)(\phi)
\right|&=
\|F^{t_{i},t_{i+1}}\|_{\chi^{\ast}}\leq \lim\inf_{k\rightarrow \infty}
\|F^{n_{k},t_{i},t_{i+1}}\|_{\chi^{\ast}}=\\
&=
\lim\inf_{k\rightarrow \infty} \sup_{\|\phi\|\leq 1}\left|\left(
\tilde{F}^{n_{k}}(t_{i+1})-\tilde{F}^{n_{k}}(t_{i}) \right)(\phi)
\right|  \; .
\end{split}
\]
Taking the sum over $i=0,\ldots,(M-1)$ we get
\[
\begin{split}
\sum_{i=0}^{M-1}\sup_{\|\phi\|\leq 1}\left|\left(
\tilde{F}(t_{i+1})-\tilde{F}(t_{i}) \right)(\phi) \right| 
&\leq
\sum_{i=0}^{M-1} \lim\inf_{k\rightarrow \infty}\sup_{\|\phi\|\leq
1}\left|\left( \tilde{F}^{n_{k}}(t_{i+1})-\tilde{F}^{n_{k}}(t_{i})
\right)(\phi) \right|\leq
\\
&
\leq \sup_{k}\sum_{i=0}^{M-1}\sup_{\|\phi\|\leq 1}\left|\left(
\tilde{F}^{n_{k}}(t_{i+1})-\tilde{F}^{n_{k}}(t_{i}) \right)(\phi)
\right|\leq \sup_{k}\|\tilde{F}^{n_{k}}\|_{Var([0,T])}		\; ,
\end{split}
\]
where the second inequality is justified by the relation $\lim\inf
a_{i}^{n}+\lim\inf b_{i}^{n}\leq \sup(a_{i}^{n}+b_{i}^{n})$.\\
Taking the sup over all subdivision $(t_{i})^{M}_{i=0}$ we obtain
\[
\|\tilde{F}\|_{Var([0,T])}\leq\sup_{k}\|\tilde{F}^{n_{k}}\|_{Var([0,T])}<+\infty \; .
\]
This shows finally the fact that $\tilde{F}(\omega,\cdot):[0,T]\longrightarrow
\chi^{\ast}$ has bounded variation.
\end{description}
\end{proof}

\section*{Acknowledgments}

This research was partially supported by the ANR Project MASTERIE 2010
 BLAN 0121 01 and by the project Stochastic Analysis and Applications 
 at Centre Interfacultaire Bernoulli of the EPFL (Lausanne).\\
We are grateful to an anonymous Referee for her/his comments which helped 
considerably the authors to improve the paper.

\bibliographystyle{plain}
\bibliography{biblio}

\end{document}